\documentclass[10pt,twoside,english,reqno,a4paper]{amsart}

\usepackage{listings,graphicx,amsmath,varioref,amscd,amssymb,color,bm,stmaryrd,amsthm,amsfonts,graphics,geometry,latexsym,pgf,pst-all} 
\theoremstyle{plain}
\usepackage{esint}
\usepackage{amsthm}
\usepackage{tikz}
\usepackage{pgfplots}
\usepackage{enumerate}

\usepackage{resmes} 
\usepackage{graphicx}
\usepackage{caption}
\usepackage{subcaption}
\theoremstyle{plain}
\newtheorem{theorem}{Theorem}[section]
\newtheorem{proposition}[theorem]{Proposition}

\newtheorem{lemma}[theorem]{Lemma}

\newtheorem{corollary}[theorem]{Corollary}

\usepackage{geometry}
\usepackage[colorlinks=false]{hyperref}

\theoremstyle{definition}
\newtheorem{defin}[theorem]{Definition}

\newtheorem{remark}[theorem]{Remark}
\newtheorem{example}{Example}

\theoremstyle{remark}

\usepackage{fouriernc}
\usepackage[T1]{fontenc}

\numberwithin{equation}{section}

\def\dis{\displaystyle}

\DeclareMathOperator{\diver}{div}

\DeclareMathOperator{\R}{\mathbb{R}}
\DeclareMathOperator{\N}{\mathbb{N}}

\newcommand{\car}[1]{\raise1pt\hbox{$\chi$}_{#1}}

\newcommand{\DM }{\mathcal{DM}^\infty }
\def\rn{\mathbb{R}^N}
\def\into{\int_{\Omega}}
\def\intdo{\int_{\partial\Omega}}
\def\dH{\mathrm{d}\mathcal{H}^{N-1}}
\def\ae{\mathrm{a.e.}}

\def\bvo{{BV(\Omega)}}
\def\lio{{L^\infty(\Omega)}}
\def\lno{{L^N(\Omega)}}
\def\sob{{W_0^{1,p}(\Omega)}}

\begin{document}
\title[]{Existence and  classification of solutions for singular 1-Laplacian equations with gradient terms}

\author[A. J. Mart\'inez Aparicio]{Antonio J. Mart\'inez Aparicio}
\author[F. Oliva]{Francescantonio Oliva}
\author[F. Petitta]{Francesco Petitta}

\address[Antonio J. Mart\'inez Aparicio]{Departamento de Matem\'aticas,
Universidad de Almer\'ia
	\hfill \break\indent
    Ctra. Sacramento s/n, La Ca\^{n}ada de San Urbano, 04120 Almer\'ia, Spain}
\email{\tt ajmaparicio@ual.es}

\address[Francescantonio Oliva]{
Dipartimento di Scienze di Base e Applicate per l' Ingegneria, Sapienza Universit\`a di Roma
	\hfill \break\indent
	Via Scarpa 16, 00161 Roma, Italy}
\email{\tt francescantonio.oliva@uniroma1.it}
\address[Francesco Petitta]{Dipartimento di Scienze di Base e Applicate per l' Ingegneria, Sapienza Universit\`a di Roma
		\hfill \break\indent
		Via Scarpa 16, 00161 Roma, Italy}
\email{\tt francesco.petitta@uniroma1.it}

\keywords{1-Laplacian, natural growth gradient terms, anti-regularizing effect, singular problems} 
\subjclass[2020]{35A01, 35B09, 35J25, 35J60, 35J75}

\begin{abstract}
We study the existence and qualitative properties of non-negative solutions to singular 1-Laplacian problems of the form
\begin{equation*}
	\begin{cases}
		-\Delta_1 u = g(u)|D u| + h(u)f & \text{in } \Omega,\\
		u=0 & \text{on } \partial\Omega,
	\end{cases}
\end{equation*}
where $\Omega\subset \mathbb{R}^N$ is a bounded domain with Lipschitz boundary, $0<f\in L^{N,\infty}(\Omega)$, and $g,h$ are continuous positive functions that may blow up at the origin. We establish the existence of a positive bounded solution even when $g$ is non-integrable at zero and $h(0)=+\infty$, breaking the non-existence phenomenon appearing in  the associated $p$-Laplacian problems. Furthermore, we prove that solutions possess a non-zero boundary trace on a set of positive measure of $\partial\Omega$ whenever $g(s)$ grows sufficiently fast near $s=0$; by contrast, when this growth condition fails, solutions vanishing on the boundary may appear. Finally, when $h(0)<+\infty$,  we provide a sharp threshold  classification  of trivial and non-trivial solutions depending on the size of $f$.
\end{abstract}

\maketitle

\setcounter{tocdepth}{1}
\tableofcontents

\section{Introduction}

The problem we address in this article is
\begin{equation}
\label{eq:PbIntro1}
	\begin{cases}
		-\Delta_1 u = g(u)|Du| + h(u)f & \text{in }\Omega,\\
		u=0 & \text{on } \partial\Omega,
	\end{cases}
\end{equation}
where $\Delta_1 u = \operatorname{div}(|Du|^{-1} Du)$ is the 1-Laplace operator, $\Omega\subset \R^N$  ($N\ge 2$) is a bounded domain with Lipschitz boundary, and $0<f(x)\in L^{N,\infty}(\Omega)$. We are particularly interested in the case where $g$ and $h$ are continuous functions possibly singular at the origin. As model functions, one might consider $g(s)=\lambda s^{-\theta}$ and $h(s)=s^{-\gamma}$, where $\theta,\gamma\geq 0$ and $\lambda>0$.

This problem presents several difficulties. On the one hand, it involves a singular gradient term with natural growth, as well as a singular zero-order term. Besides, the problem is driven by the 1-Laplacian, a highly degenerate operator.

For the $p$-Laplacian ($p>1$), problems involving a gradient term with natural growth have been widely investigated. There are many existence and non-existence results concerning the problem
\begin{equation}
\label{eq:PbIntro2}
	\begin{cases}
		-\Delta_p u = g(u)|\nabla u|^p + h(u)f & \text{in }\Omega,\\
		u=0 & \text{on } \partial\Omega,
	\end{cases}
\end{equation}
under different conditions on $f$, $g$ and $h$. When problem~\eqref{eq:PbIntro2} is non-singular (for instance, when $g\in C(\R)$ and $h\equiv 1$) one finds a vast number of contributions; we refer, without being exhaustive, to the classical works \cite{AbDaPe, BMP1, FeMu, PoSe}. As in the non-singular  case, when $g$ blows up at the origin, the nature of  the problem depends on whether the gradient is an absorption or a reaction term. Some works dealing with the absorption case ($g(s)\approx -s^{-\theta}$) are~\cite{ACLMOP, BO, dura, pedro, xz}. Regarding the reaction case ($g(s)\approx s^{-\theta}$), the threshold $\theta=1$ is critical for the existence of global finite energy solutions, at least when $f$ is sufficiently integrable. This can be seen through the following heuristic argument. Consider the problem
\begin{equation}
\label{eq:PbIntro3}
	\begin{cases}
		-\Delta_p u = \lambda\frac{|\nabla u|^p}{u^\theta} + \frac{f}{u^\gamma} & \text{in }\Omega,\\
		u=0 & \text{on } \partial\Omega,
	\end{cases}
\end{equation}
where $\theta, \lambda, f>0$ and $\gamma\geq 0$. If $\theta=1$, one may formally use $u \geq 0$ ($u\not\equiv 0$) as test function in~\eqref{eq:PbIntro3} to obtain that
\[
\left(1-\lambda \right) \into |\nabla u|^p = \into fu^{1-\gamma} > 0.
\]
Therefore, if $\lambda\geq 1$, no $\sob$-solution to~\eqref{eq:PbIntro3} can exist. When $\theta>1$, a similar argument shows that for any $\lambda>0$ the non-existence phenomenon persists. For a formal proof of these results, see~\cite{Ol}. On the other hand, several existence results are available when $\theta<1$ and $\lambda>0$, or when $\theta=1$ and $\lambda<1$, see for instance~\cite{ABLP,gps1,gps2}. 

In this work, we study whether these thresholds of existence and non-existence are also present when considering the 1-Laplacian operator. It should be noted that this operator presents some specific  difficulties with respect to the $p$-Laplacian. First, the lack of compactness of $W^{1,1}(\Omega)$ makes $BV(\Omega)$ the natural space for finding the solutions. Recall that $BV(\Omega)$ is the space of the $L^1$ functions $u$ whose distributional gradient $Du$ is a Radon measure with finite total variation. Since $Du$ is just a Radon measure, a second difficulty arises when giving meaning to the quotient $|Du|^{-1} Du$ appearing in the definition of $\Delta_1 u$. In the seminal paper \cite{AnBaCaMa}, the authors employed Anzellotti's theory (\cite{Anz}) to define a concept of solution for 1-Laplace problems, which is now widely adopted, and which involves a bounded vector field $z$ which plays the role of that quotient.

A striking consequence of this definition is the way in which the Dirichlet boundary condition is understood, allowing the existence of solutions with non-zero boundary trace. Indeed, for $u$ to be a solution of a 1-Laplace problem, one simply requires on the boundary that $[z,\nu](x) \in \operatorname{sign}(-u(x))$ for $\mathcal{H}^{N-1}$-$\ae$ $x\in \partial\Omega$. Here, $[z,\nu](x)$ denotes the generalized normal trace of $z$ at $x\in\partial\Omega$.

There is a vast literature on problem~\eqref{eq:PbIntro1} when $g\equiv 0$. Unlike what occurs with the $p$-Laplacian, the existence of solution to
\begin{equation}
\label{eq:PbIntro4}
	\begin{cases}
		-\Delta_1 u = h(u)f & \text{in }\Omega,\\
		u=0 & \text{on } \partial\Omega,
	\end{cases}
\end{equation}
with $h\equiv 1$ and $f\in L^{N,\infty}(\Omega) \subset W^{-1,\infty}(\Omega)$ strongly depends on the size of $f$. If $\mathcal{\tilde S}_1$ denotes the best constant of the Sobolev embedding $W_0^{1,1}(\Omega) \hookrightarrow L^{\frac{N}{N-1},1}(\Omega)$, then a $BV$-solution to~\eqref{eq:PbIntro4} exists if $\|f\|_{L^{N,\infty}(\Omega)} \leq \mathcal{\tilde S}_1^{-1}$, whereas it does not exist when $\|f\|_{W^{-1,\infty}(\Omega)}>1$ (\cite{MeSeTr}, see also~\cite{CiTr, Kaw}). Surprisingly, when $\|f\|_{L^{N,\infty}(\Omega)} < \mathcal{\tilde S}_1^{-1}$ or $\|f\|_{W^{-1,\infty}(\Omega)} < 1$ the unique solution is $u\equiv 0$, and non-zero solutions can only appear when these smallness conditions are not satisfied. In this latter case, the structure of the 1-Laplacian leads to a strong nonuniqueness phenomenon: if $u$ is a solution to~\eqref{eq:PbIntro4} (with $h\equiv 1$), then any smooth increasing transformation of $u$ preserving the origin is also a solution. For instance, if $\Omega$ is the unit ball and $f\equiv N$ (see that $\|f\|_{L^{N,\infty}(\Omega)} \geq \mathcal{\tilde S}_1^{-1}$ and $\|f\|_{W^{-1,\infty}(\Omega)} = 1$), then any $u\equiv \alpha>0$ solves~\eqref{eq:PbIntro4}.

Introducing a nonconstant function $h$ in~\eqref{eq:PbIntro4} induces some regularizing effects, at least when $h$ is bounded at infinity. For simplicity, assume that $f>0$ and $h\colon \R^+\to \R^+$ is continuous. If one denotes $h(\infty):=\limsup_{s\to +\infty} h(s)<+\infty$, then a bounded non-negative $BV$-solution to~\eqref{eq:PbIntro4} exists if $\|f\|_{L^{N,\infty}(\Omega)} < (\mathcal{\tilde S}_1 h(\infty) )^{-1}$. Therefore, if $h(\infty)=0$, no smallness is required in $f$. In addition, this global $BV$-solution is positive in $\Omega$ (and hence non-trivial) provided that $h$ is singular at the origin, that is, $h(0):=\lim_{s\to 0^+} h(s) = +\infty$. These results can be found in \cite{MaOlPe1} (see also~\cite{DecGOP, LaSeOlPe}).

When $h$ is unbounded at infinity, the existence of solution depends on the specific structure of $h$. Some works dealing with these nonlinearities are \cite{MaOlPe2, MoSe}.

Another property of solutions to~\eqref{eq:PbIntro4} is that, as they belong to $BV(\Omega)$, they may have jumps. Indeed, this really occurs in some cases (see, for instance, \cite[Section~6.3]{LaSeOlPe}). The situation is entirely different when a gradient term is introduced, as in problem
\begin{equation}
\label{eq:PbIntro5}
	\begin{cases}
		-\Delta_1 u + g(u)|Du| = h(u)f & \text{in }\Omega,\\
		u=0 & \text{on } \partial\Omega.
	\end{cases}
\end{equation}
Under certain assumptions on $g$ and $h$, a global $BV$ solution to~\eqref{eq:PbIntro5} without jump part exists for any positive $f\in L^1(\Omega)$. Furthermore, this solution has zero boundary trace. More specifically, it is sufficient for $g$ to be integrable near zero and non-vanishing at infinity, while $h$ needs only to satisfy that $h(\infty)<+\infty$ (both functions are allowed to blow up at zero). When $h$ is non-increasing, the solution is unique. These results are contained in~\cite{AJMA}, see also the earlier works~\cite{Bal, BalOP, MaSe, LaSe1, LaSe2}.

Of all these good properties, the reaction gradient case only preserves the property that solutions have no jump part. If one takes $g\equiv 1$ and $h\equiv 1$ in problem
\begin{equation}
\label{eq:PbIntro6}
	\begin{cases}
		-\Delta_1 u = g(u)|Du| + h(u)f & \text{in }\Omega,\\
		u=0 & \text{on } \partial\Omega,
	\end{cases}
\end{equation}
then a strong multiplicity phenomenon arises as long as $0\leq f\in L^m(\Omega)$ $(m>N)$ is sufficiently small; indeed, infinitely many $BV$-solutions exist (\cite{AbDaSe}). By contrast, there are some functions $f\in L^N(\Omega)$ for which a $BV$-solution does not exist (\cite{AnDaSe, DaSe}). Regarding the boundary condition, some solutions have zero boundary trace, while others have non-zero trace. 

More general functions are considered in~\cite{GOP}, where the authors take $0<f\in L^{N,\infty}(\Omega)$ and $g,h\in C(\R^+,\R^+)$, with $h(s)\approx s^{-\gamma}$ ($\gamma\geq 0$) near the origin and $h(\infty)<+\infty$. When the gradient term is non-singular or slightly singular (that is, when $g(0)<+\infty$ or $g(s)\approx s^{-\theta}$ ($\theta<1$) near zero), they prove the existence of a bounded non-negative $BV$-solution to~\eqref{eq:PbIntro6} with no jump part provided that $\|f\|_{L^{N,\infty}(\Omega)} < (\mathcal{\tilde S}_1 h(\infty) )^{-1}$. Here, they take advantage of the integrability of $g$ near the origin.

The case where $g$ is non-integrable at the origin is considerably more complicated, and different strategies must be employed to handle the term $g(u)|Du|$. In continuity with the $p$-Laplacian setting (see~\eqref{eq:PbIntro3}), for $g$ modelled as $g(s)= \frac{\lambda}{s}$ with $\lambda>0$, the authors in~\cite{GOP} establish an existence result for every $\lambda<1$, provided that the function $f$ satisfies
\begin{equation}
    \label{eq:Intro_f}
    \forall \omega \subset\subset \Omega \ \ \exists c_\omega>0: f\geq c_\omega \ \ae \text{ in } \omega,
\end{equation}
and the function $h$ is singular and behaves as $s^{-\gamma}$ ($\gamma> 0$) near zero.

In this work, we significantly generalize the result in ~\cite{GOP}  by showing  that the restriction $\lambda<1$ is merely technical, and that existence actually holds for any $\lambda>0$. Moreover, we also address the case of stronger singularities, modelled as $g(s)=\frac{\lambda}{s^\theta}$, $\lambda>0$, $\theta\geq 1$, and we prove the existence of a bounded $BV$-solution to~\eqref{eq:PbIntro6} provided that $f$ verifies~\eqref{eq:Intro_f} and $h(s) \approx s^{-\gamma}$ ($\gamma> 0$) near zero.

Our existence result reveals a continuity-breaking phenomenon with respect to the $p$-Laplacian framework (for a related continuity-breaking phenomenon when $g\equiv 0$, see~\cite{MaOlPe1}). Therefore, to establish this, we need to employ a novel approach that exploits the specific structural features of the 1-Laplace operator in the presence of a gradient lower order term.

Additionally, we show that a very singular reaction gradient term has an anti-regularizing effect on the boundary value of the solutions to~\eqref{eq:PbIntro6}. More precisely, we prove that if $g$ is non-integrable near zero, then no solution $u$ to~\eqref{eq:PbIntro6} has zero boundary trace. To our knowledge, this is the first general result that states that any solution to a whole family of problems is positive somewhere on $\partial\Omega$. This contrasts sharply with the case of absorption gradient terms, where solutions always have zero boundary trace.

To better illustrate our results, we state them for the model problem (see Theorems~\ref{th:Exist} and~\ref{th:Trace}). 

\begin{theorem}
Let $0<f\in L^{N,\infty}(\Omega)$, and let $\lambda,\theta,\gamma>0$. Then the problem
\begin{equation}
\label{eq:PbIntro7}
	\begin{cases}
		-\Delta_1 u = \lambda \frac{|Du|}{u^\theta} + \frac{f}{u^\gamma} & \text{in }\Omega,\\
		u=0 & \text{on } \partial\Omega,
	\end{cases}
\end{equation}
has a positive solution $u\in \bvo\cap \lio$ if one of the following cases holds:
\begin{enumerate}
    \item[(i)] $\theta<1$,
    \item[(ii)] $\theta\geq 1$ and $f$ satisfies~\eqref{eq:Intro_f}.
\end{enumerate}
Furthermore, if $\theta\geq 1$, then for any solution $u\in \bvo$ to~\eqref{eq:PbIntro7} there exists a set $S \subset \partial\Omega$ with $\mathcal{H}^{N-1}(S)>0$ such that
\[
u(x) > 0,\ \forall x\in S.
\]
\end{theorem}

The definition of solution itself forces any solution to~\eqref{eq:PbIntro6} to be positive in $\Omega$, provided that $h$ is singular at the origin. Another question investigated in this paper is whether solutions to~\eqref{eq:PbIntro6} are trivial (i.e., identically zero) or non-trivial when $h(0)<+\infty$. As we show in Theorem~\ref{th:Dual}, this depends entirely on the size of $f$: when $f$ is small, problem~\eqref{eq:PbIntro6} admits only the trivial solution, whereas when $f$ is large, every solution is non-trivial. Between these sharp thresholds, we prove the existence of a ``transition regime'' where trivial and non-trivial solutions can coexist. This constitutes one of the main novelties of our work, extending and complementing the previous results in~\cite{CiTr, DecGOP, MeSeTr, Kaw}.

Finally, we emphasize that all the results in this paper are complemented by several examples illustrating the sharpness of our assumptions.

\subsection{Plan of the paper}

The rest of the paper is organized as follows. In Section~\ref{sec:Prelim}, we collect the necessary background to be used throughout the manuscript. After establishing basic notation, we review the fundamental properties of functions of bounded variation, as well as Anzellotti's theory of $L^\infty$-measure-divergence vector fields. We also prove a technical result that ensures, under suitable conditions, that a $BV$ function has no jump part.

In Section~\ref{sec:Main}, we state our main existence (Theorem~\ref{th:Exist}) and trace (Theorem~\ref{th:Trace}) results. Section~\ref{sec:Exist} is then devoted to the proof of Theorem~\ref{th:Exist}, which relies on an approximation scheme; uniform estimates are derived via a bootstrap argument, while the limiting process requires a novel technique to bypass singularities. In Section~\ref{sec:Trace}, we prove Theorem~\ref{th:Trace}, showing that no solution can have a zero boundary trace whenever the gradient term is non-integrable at the origin. The sharpness of the main results is exhibited through several explicit examples in Section~\ref{sec:Sharpness}.

Section~\ref{sec:trivial_solutions} establishes sharp thresholds for solution triviality and non-triviality when the zero-order term is non-singular. Between these thresholds, we illustrate with examples that a ``transition regime'' emerges, where the problem may simultaneously admit both trivial and non-trivial solutions.

Finally, for the reader's convenience, we include Appendix~\ref{sec:Appendix}, where we prove the existence of bounded solutions for the non-singular approximate problems used in Section~\ref{sec:Exist}.

\section{Preliminaries}
\label{sec:Prelim}

\subsection{Notation}

Throughout this article, $\Omega$ is an open bounded subset of $\R^N$ ($N\geq 2$) with Lipschitz boundary. Given a set $E\subset \R^N$, $\mathcal{H}^{N-1}(\partial E)$ stands for the $(N-1)$-dimensional Hausdorff measure of its boundary, while $|E|$ denotes the classical $N$-dimensional Lebesgue measure $\mathcal{L}^N(E)$.

We say that $f_1<f_2$ in $E$ if $f_1(x)<f_2(x)$ for almost every $x\in E$. As for the integrals, we denote by $\into f$ the integral of $f$ with respect to the Lebesgue measure, whereas $\into f\mu$ denotes the integral of $f$ with respect to the measure $\mu$.

For a fixed $k>0$, we set $T_{k}$ and $G_{k}$ as the real functions defined for any $s\in \R$ as
\begin{equation}
\label{eq:def_Tk_Gk}
T_k(s):=\max \{-k,\min \{s,k\} \} \qquad \text{and} \qquad G_k(s):= \min\{s+k, \max\{0,s-k\}\}. 
\end{equation}
Moreover, we represent the characteristic function of a set $A\subseteq\Omega$ by $\chi_A$.

Finally, we denote by $\mathcal{S}_1$ and $\tilde{\mathcal{S}}_1$ the best constants of the embeddings $W_0^{1,1}(\Omega) \hookrightarrow L^{\frac{N}{N-1}}(\Omega)$ and $W_0^{1,1}(\Omega) \hookrightarrow L^{\frac{N}{N-1},1}(\Omega)$, respectively (see~\eqref{eq:emb1} and~\eqref{eq:emb2}). Their values are well known and given by $\mathcal{S}_1 = \left( N \omega_{N}^{1/N} \right)^{-1}$ and $\tilde{\mathcal{S}}_1 = \left( (N-1)\omega_{N}^{1/N} \right)^{-1}$.

\subsection{Functions of bounded variation} We denote by $\mathcal{M}(\Omega)$ the space of Radon measures with finite total variation over $\Omega$. The space of functions of bounded  variation is defined as
\[
BV(\Omega):= \left\{ u\in L^1(\Omega) : Du \in \mathcal{M}(\Omega)^N \right\},
\]
where $Du$ stands for the distributional gradient of $u$. Moreover, we denote by $BV_{\rm loc} (\Omega)$ the set of functions $u\in L^1_{\rm loc}(\Omega)$ that belong to $BV(\omega)$ for any open set $\omega$ compactly contained in $\Omega$.

Every function in $\bvo$ has a trace in $L^1(\partial\Omega)$. This allows to define the norm
\[
\|u\|_{BV(\Omega)}=\int_\Omega|Du| + \int_{\partial\Omega} |u| \ \dH,\ \forall u\in\bvo,
\]
where $\int_\Omega|Du|$ denotes the total variation of the measure $Du$ over $\Omega$. It is worth noting that $\|\cdot\|_\bvo$ is equivalent to the norm given by $\|u\|=\int_\Omega|Du| + \int_\Omega |u|$. Furthermore, $\|\cdot\|_\bvo$ is   lower semicontinuous with respect to the $L^1(\Omega)$ convergence. The same property holds for the functional $u\mapsto \int_\Omega \varphi|Du|$ with $0\leq \varphi\in C(\overline\Omega)$ fixed. 

Recall that for $u \in L_{\rm loc}^1(\Omega)$, we say that $u$ has an approximate limit at $x_0 \in \Omega$ if there exists $z\in \R$ such that 
\begin{equation*}
    \lim_{r\to 0^+} \frac{1}{|B_r(x_0)|}\int_{B_r(x_0)} |u(y) - z| \, \mathrm{d}y = 0.
\end{equation*}
The set $S_u$ of points where this property does not hold is called the approximate discontinuity set. This set $S_u$ is a $\mathcal{L}^N$-negligible Borel set (\cite[Proposition~3.64]{AFP}). For any $x_0\in \Omega\setminus S_u$ the number $z$, which is uniquely determined, is called the approximate limit of $u$ at $x_0$ and is denoted by $\widetilde{u}(x_0)=z$.

On the other hand, we say that $x_0\in\Omega$ is an approximate jump point of $u$ if there exist $a,b\in \R$ and a unit vector $\nu \in S^{N-1}$ such that $a\neq b$ and
\begin{align*}
    \lim_{r\to 0^+} \frac{1}{|B^+_r(x_0)|} \int_{B^+_{r}(x_0)} |u(y) - a| \, \mathrm{d}y &= 0, \\
    \lim_{r\to 0^+} \frac{1}{|B^-_r(x_0)|} \int_{B^-_{r}(x_0)} |u(y) - b| \, \mathrm{d}y &= 0,
\end{align*}
where $B_r^\pm(x_0) := \{y\in B_r(x_0): \pm (y-x_0)\cdot \nu >0\}$. The triplet $(a,b,\nu)$, uniquely determined up to a permutation of $(a,b)$ and a change of sign of $\nu$, is denoted by $(u^+(x_0), u^-(x_0), \nu_u(x_0))$. The set of approximate jump points is denoted by $J_u$. It is a Borel subset of $S_u$ (\cite[Proposition~3.69]{AFP}), and $\mathcal{H}^{N-1}(S_u \setminus J_u) = 0$ if $u \in BV(\Omega)$. Moreover, up to an $\mathcal{H}^{N-1}$-negligible set, $J_u$ is an $\mathcal{H}^{N-1}$-rectifiable set, and an orientation $\nu_u(x)$ is defined for $\mathcal{H}^{N-1}$-almost every $x \in J_u$ (\cite[Theorem~3.78]{AFP}).

The precise representative of $u$  is defined as the $\mathcal{H}^{N-1}$-a.e. finite function 
$u^*\colon \Omega\setminus(S_u\setminus J_u)\to \mathbb{R}$ given by 
\begin{equation*}
u^*(x)=
\begin{cases}
\widetilde{u}(x)& \text{ if } x\in \Omega\setminus S_u,\\
\frac{u^+(x)+u^-(x)}{2}& \text{ if } x\in J_u.
\end{cases}
\end{equation*}
In the particular case where $\mathcal{H}^{N-1}(J_u)=0$, we write $u$ instead of $u^*$ for the precise representative of $u$, as no ambiguity is possible when integrating against a measure which is absolutely continuous with respect to $\mathcal{H}^{N-1}$.

If $u\in\bvo$, then $Du$ can be decomposed as
\[
Du = D^a u + D^s u,
\]
where $D^a u$ is the absolutely continuous part of $Du$ with respect to the Lebesgue measure $\mathcal{L}^N$, and $D^s u$ is the singular part. The measure $D^a u$ is just $\nabla u \, \mathcal{L}^N$, where $\nabla u$ is the approximate gradient of $u$ (\cite[Definition~3.70]{AFP}). The singular part can be further decomposed as the sum of its Cantor and jump part as
\[
D^s u = D^c u + D^j u, \qquad D^c u := D^s u \llcorner (\Omega\setminus S_u), \qquad D^j u:= D^s u \llcorner J_u = (u^+ - u^-)\nu_u \mathcal{H}^{N-1} \llcorner J_u.
\]
We denote by $\widetilde D u = D^a u + D^c u$ the diffuse part of $Du$. An important feature of $\widetilde D u$ is that, for every $t\in\R$, it holds that $\widetilde D u \llcorner \{u=t\} = 0$ (see \cite[Proposition~3.92]{AFP}).

A chain rule formula for these functions is available.

\begin{lemma}{\cite[Theorem 3.99]{AFP}}
\label{lem:ChainRule_AFP}
Let $u\in BV_{\rm loc}(\Omega)$. If $\Psi \colon \R\to \R$ is a Lipschitz function, then $\Psi (u)\in BV_{\rm loc}(\Omega)$ and
\begin{equation}
\label{eq:chain_rule_1}
D\Psi(u) = \Psi'(u) \widetilde D u + (\Psi(u^+) - \Psi(u^-)) \nu_u \mathcal{H}^{N-1} \llcorner J_u.
\end{equation}
\end{lemma}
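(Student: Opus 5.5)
This is the chain rule of Ambrosio--Fusco--Pallara (cited as \cite[Theorem 3.99]{AFP}), so the plan follows the standard proof. The statement is local, so I fix $\omega\subset\subset\Omega$ and work with $u\in BV(\omega)$. The first point is that $\Psi(u)\in BV(\omega)$ with $|D\Psi(u)|\le \mathrm{Lip}(\Psi)|Du|$. To see this, I approximate $u$ by smooth $u_k$ in the strict topology, meaning $u_k\to u$ in $L^1$ and $|Du_k|(\omega)\to|Du|(\omega)$. Then $\Psi(u_k)\to\Psi(u)$ in $L^1$ and $|\nabla \Psi(u_k)|\le L|\nabla u_k|$, so lower semicontinuity of the total variation gives the bound. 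Consequently $D\Psi(u)\ll|Du|$, and it suffices to identify its density separately on the diffuse part and on the jump part.

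I first treat $\Psi\in C^1$ with bounded derivative. On the diffuse part, $\Psi'(u_k)\nabla u_k\to\Psi'(\tilde u)\widetilde Du$ away from $S_u$. The cleanest route is the coarea formula. We have $D\Psi(u)\llcorner(\Omega\setminus S_u)=\int_{\mathbb R}D\chi_{\{\Psi(u)>s\}}\,ds$, and since $\Psi$ is monotone on small intervals near regular values, the superlevel sets $\{\Psi(u)>s\}$ coincide with $\{u>t\}$ with $s=\Psi(t)$. The change of variables $ds=\Psi'(t)\,dt$ then gives $\Psi'(u)\widetilde Du$. Alternatively I would prove it first for $\Psi$ affine on intervals and pass to the limit. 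On $J_u$, the blow-up characterization of approximate jump points shows directly that $\Psi(u)$ has one-sided approximate limits $\Psi(u^\pm)$ with the same normal $\nu_u$. By the structure theorem, $D^j\Psi(u)=(\Psi(u^+)-\Psi(u^-))\nu_u\mathcal H^{N-1}\llcorner J_u$; if $\Psi(u^+)=\Psi(u^-)$ the point simply drops out of $J_{\Psi(u)}$, which is consistent with the formula. Because $|D\Psi(u)|\le L|Du|$, no Cantor or jump mass of $D\Psi(u)$ can live outside $J_u\cup$ (support of $\widetilde Du$).

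The main obstacle is extending the result to a merely Lipschitz $\Psi$. In that case $\Psi'(u)$ is only defined where $u$ avoids the null set $Z$ on which $\Psi$ fails to be differentiable, so one must show that $\widetilde Du$ does not charge $\{\tilde u\in Z\}$. I would prove this with the coarea formula: $|\widetilde Du|(\{\tilde u\in Z\})\le\int_Z \mathcal H^{N-1}(\partial^*\{u>t\})\,dt=0$ because $|Z|=0$. This also uses that $\widetilde Du\llcorner\{u=t\}=0$ for each $t$, the property recalled earlier from \cite[Proposition~3.92]{AFP}. I then approximate $\Psi$ by $C^1$ functions $\Psi_\varepsilon$ with $\Psi_\varepsilon\to\Psi$ uniformly, $\Psi_\varepsilon'\to\Psi'$ a.e., and a uniform Lipschitz bound. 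The formula passes to the limit by dominated convergence with respect to $|Du|$, while the left-hand sides converge in the sense of distributions.
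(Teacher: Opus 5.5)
Your outline is correct and is essentially the standard argument in \cite{AFP}: first the $C^1$ chain rule, then the Lipschitz case by approximation, using that $\widetilde Du$ does not charge $\tilde u^{-1}(Z)$ for $\mathcal L^1$-null $Z$. The paper does not reprove this; it simply cites that reference. The main point to tighten is the $C^1$ diffuse step: $\{\Psi(u)>s\}$ and $\{u>t\}$ do not coincide globally, but only up to a set of zero density at points of $\{\tilde u=t\}$, and with $\{u<t\}$ in place of $\{u>t\}$ when $\Psi'(t)<0$. So for a.e.\ $s$ you must sum over $t\in\Psi^{-1}(s)$ with sign $\operatorname{sign}\Psi'(t)$ before changing variables. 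Relatedly, the points of $J_u$ where $\Psi(u^+)=\Psi(u^-)$ are not charged by $D\Psi(u)$, because they are approximate continuity points of $\Psi(u)$ and the diffuse part of $D\Psi(u)$ vanishes on $\mathcal H^{N-1}$-$\sigma$-finite sets.
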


\begin{remark}
The chain rule can be rewritten in a more concise form. With this aim, given a locally integrable function $\beta\colon \R\to\R$ and $v\in BV_{\rm loc}(\Omega)$, we set 
\begin{equation}
\label{eq:def_beta_hash}
\beta(v)^\# := \begin{cases}
    \frac{1}{v^+ - v^-} \int_{v^-}^{v^+} \beta(s) \, \mathrm{d}s, & \text{ if } x\in J_v,\\
    \beta(\tilde v), & \text{if } x\in \Omega\setminus S_v.
\end{cases}
\end{equation}
Observe that $\beta(v)^\#$ is a particular representative of $\beta(v)$ and coincides with $\beta(v)^*$ if $\beta$ is an affine function, but in general $\beta(v)^\# \neq \beta(v)^*$. By means of this function, the chain rule~\eqref{eq:chain_rule_1} can be expressed as
\begin{equation}
\label{eq:chain_rule_2}
D\Psi(u) = \Psi'(u)^\# D u.
\end{equation}
\end{remark}

In this work, we mainly deal with functions without a jump part. In such cases, a more general version of the chain rule holds, provided that the composition is known to belong to $BV_{\rm loc}(\Omega)$.

\begin{lemma}{\cite[Lemma 2.3]{AJMA}}
\label{lem:ChainRule}
Let $0\leq u\in BV_{\rm loc}(\Omega)$ be such that $D^j u = 0$, and let $g\colon [0,+\infty) \to (0,+\infty]$ be a continuous function finite outside the origin. Let $\Gamma\colon [0,+\infty) \to [-\infty, +\infty)$ be such that $\Gamma'(s) = g(s)$, and suppose that $\Gamma(u)\in BV_{\rm loc}(\Omega)$. Then $g(u) \in L^1_{\rm loc}(\Omega, |Du|)$ and it holds
\[
\chi_{\{u>0\}}|D\Gamma(u)| = g(u) |Du| \text{ as measures in } \Omega.
\]
Consequently, if $\Gamma(0)$ is finite, then
\[
|D\Gamma(u)| = g(u) |Du| \text{ as measures in } \Omega.
\]
\end{lemma}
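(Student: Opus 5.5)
To prove Lemma~\ref{lem:ChainRule}, I would reduce to the Lipschitz chain rule (Lemma~\ref{lem:ChainRule_AFP}) by truncating away from the origin, and then pass to the limit on the set where $u$ vanishes.

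\textbf{Step 1: truncate away from zero.} For $\varepsilon>0$ set $\Psi_\varepsilon(s):=\Gamma(\max\{s,\varepsilon\})$, defined for $s\ge 0$. On bounded sets $g$ is continuous and finite on $[\varepsilon,+\infty)$, so $\Psi_\varepsilon$ is Lipschitz on $[0,k]$ for every $k$. To apply Lemma~\ref{lem:ChainRule_AFP} I would also truncate at level $k$ and compose with $T_k(u)$, then let $k\to\infty$ at the end, which is harmless locally. Since $D^j u=0$, the diffuse part is all of $Du$, and Lemma~\ref{lem:ChainRule_AFP} gives
\[
D\Psi_\varepsilon(u)=\chi_{\{u>\varepsilon\}}\,g(u)\,Du ,
\]
using that $\Psi_\varepsilon'(s)=0$ for $s<\varepsilon$. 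The level set $\{u=\varepsilon\}$ carries no diffuse measure, so the value of $\Psi_\varepsilon'$ there is irrelevant. Taking total variations, $|D\Psi_\varepsilon(u)|=\chi_{\{u>\varepsilon\}}\,g(u)\,|Du|$.

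\textbf{Step 2: relate $\Psi_\varepsilon(u)$ to $\Gamma(u)$.} I would write $\Psi_\varepsilon(u)=\max\{\Gamma(u),\Gamma(\varepsilon)\}$, which holds because $\Gamma$ is increasing since $g>0$. This is a lattice truncation of the $BV_{\rm loc}$ function $\Gamma(u)$. By the coarea formula, or Lemma~\ref{lem:ChainRule_AFP} applied to $\Gamma(u)$ with the Lipschitz map $t\mapsto\max\{t,\Gamma(\varepsilon)\}$, one gets
\[
|D\Psi_\varepsilon(u)|\le|D\Gamma(u)| .
\]
Moreover $|D\Psi_\varepsilon(u)|$ coincides with $|D\Gamma(u)|$ restricted to $\{\Gamma(u)>\Gamma(\varepsilon)\}=\{u>\varepsilon\}$. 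This restriction needs a little care with representatives, since $u$ has no jumps but $\Gamma(u)$ might a priori. The justification is that $\{u>\varepsilon\}$ is, up to $|Du|$-null sets, the set where the approximate limits exceed $\varepsilon$, and there $\Gamma$ is locally Lipschitz, so $\Gamma(u)$ has no jump on it. Hence
\[
\chi_{\{u>\varepsilon\}}\,|D\Gamma(u)|=\chi_{\{u>\varepsilon\}}\,g(u)\,|Du| .
\]

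\textbf{Step 3: let $\varepsilon\to0$.} By monotone convergence on both sides, $\chi_{\{u>0\}}|D\Gamma(u)|=\chi_{\{u>0\}}\,g(u)\,|Du|$. Since the left side is a locally finite measure, $g(u)\chi_{\{u>0\}}\in L^1_{\rm loc}(|Du|)$. To obtain the stated identity I also need $|Du|(\{u=0\})=0$. This holds because $Du=\widetilde Du$ and $\widetilde Du\llcorner\{u=0\}=0$. Therefore $g(u)|Du|=\chi_{\{u>0\}}g(u)|Du|$, which gives both the integrability of $g(u)$ and the first identity.

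\textbf{Step 4: the case $\Gamma(0)$ finite.} Here I would show $|D\Gamma(u)|(\{u=0\})=0$. When $\Gamma(0)$ is finite, $\Gamma$ is continuous and increasing on $[0,\infty)$, so $\{u=0\}=\{\Gamma(u)=\Gamma(0)\}$. The diffuse part of $D\Gamma(u)$ vanishes on this level set. The remaining issue is to exclude jump points of $\Gamma(u)$ with $u=0$. I would argue that at such a point one approximate value of $u$ would be $0$ and the other positive, contradicting $D^j u=0$ because $\Gamma$ is strictly monotone and continuous. Hence $|D\Gamma(u)|=g(u)|Du|$.

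The main obstacle I expect is this representative issue, in Step 2 and Step 4: making rigorous that $\Gamma(u)$ has no jump part given that $u$ has none, when $\Gamma$ may fail to be Lipschitz near $0$. I would handle it through the characterization of jump points via approximate one-sided limits, using that $\Gamma$ is a homeomorphism onto its image, so approximate limits transfer under $\Gamma$ and under $\Gamma^{-1}$ locally.
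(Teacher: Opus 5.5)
Your proof is correct. The paper does not prove this lemma; it quotes it from \cite{AJMA}, so there is no in-paper argument to compare with. Judged on its own terms, your strategy is complete: truncate with $\Psi_\varepsilon(s)=\Gamma(\max\{s,\varepsilon\})$, apply Lemma~\ref{lem:ChainRule_AFP}, then let $\varepsilon\to0$ by monotone convergence. The representative issue you single out is indeed the only delicate point.

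\textbf{The representative issue.} Your approximate-limit argument settles it, and you only need the direction in which $\Gamma$ transports limits.
\begin{enumerate}
\item[(a)] If $x\notin S_u$, then $\Gamma(u)$ converges in density to $\Gamma(\tilde u(x))\in[-\infty,+\infty)$ at $x$. This is incompatible with $x\in J_{\Gamma(u)}$, so $J_{\Gamma(u)}\subset S_u$.
\item[(b)] For the same reason, $\widetilde{\Gamma(u)}=\Gamma(\tilde u)$ wherever both approximate limits exist.
\item[(c)] $D^ju=0$ gives $\mathcal H^{N-1}(J_u)=0$, and $\mathcal H^{N-1}(S_u\setminus J_u)=0$ always holds. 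Hence $D^j\Gamma(u)=0$, and $\{\widetilde{\Gamma(u)}>\Gamma(\varepsilon)\}=\{\tilde u>\varepsilon\}$ up to $\mathcal H^{N-1}$-null sets.
\end{enumerate}
In Step~2 the exceptional sets must be $\mathcal H^{N-1}$-null, not merely $|Du|$-null as you wrote, because you are restricting the measure $|D\Gamma(u)|$. This works precisely because $S_u$ is $\mathcal H^{N-1}$-null.

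\textbf{A shorter route.} Your Step~1 already contains a quicker proof that $D^j\Gamma(u)=0$. There, $\max\{\Gamma(u),\Gamma(\varepsilon)\}$ has derivative $\chi_{\{u>\varepsilon\}}g(u)Du$, so it has no jump part. Apply Lemma~\ref{lem:ChainRule_AFP} to $w=\Gamma(u)$ and $t\mapsto\max\{t,\Gamma(\varepsilon)\}$. This forces $\max\{w^+,w^-\}\le\Gamma(\varepsilon)$ at $\mathcal H^{N-1}$-a.e.\ point of $J_w$. Now let $\varepsilon\to0$ along a sequence:
\begin{enumerate}
\item[(i)] if $\Gamma(0)=-\infty$, no jump can survive, since $w^\pm$ are finite;
\item[(ii)] if $\Gamma(0)$ is finite, $w\ge\Gamma(0)$ a.e.\ gives $w^\pm\ge\Gamma(0)$, so $w^+=w^-$.
\end{enumerate}
This also disposes of Step~4.

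\textbf{A side remark.} The same density argument shows more when $\Gamma(0)=-\infty$: every point with $\tilde u(x)=0$ lies in $S_{\Gamma(u)}\setminus J_{\Gamma(u)}$. Hence $\mathcal H^{N-1}(\{\tilde u=0\})=0$, and $|D\Gamma(u)|=g(u)|Du|$ holds in that case too. This is exactly the observation behind \eqref{eq:prov19} in the proof of Theorem~\ref{th:Exist}.
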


For further properties of functions of bounded variation, we refer the reader to~\cite{AFP}. 

\subsubsection{Sobolev embeddings and Lorentz spaces}

The classic embedding $W_0^{1,1}(\Omega) \hookrightarrow L^{\frac{N}{N-1}}(\Omega)$ can be extended to $BV(\Omega)$ functions by approximation (\cite{AFP}), leading to the inequality
\begin{equation}
\label{eq:emb1}
\|u\|_{L^{\frac{N}{N-1}}(\Omega)} \leq \mathcal{S}_1 \|u\|_{BV(\Omega)},\ \forall u\in \bvo.
\end{equation}
Here, the optimal constant is $\mathcal{S}_1=\left(N\omega_N^{1/N} \right)^{-1}$, where $\omega_{N}$ denotes the volume of the unit ball in $\rn$ (see, for instance,~\cite{Lorentz}).

In the context of Lorentz spaces, the above embedding can be refined, as $\bvo$ is continuously embedded into $L^{\frac{N}{N-1},1}(\Omega)$, which is strictly smaller than $L^{\frac{N}{N-1}}(\Omega)$. Consequently,
\begin{equation}
\label{eq:emb2}
\|u\|_{L^{\frac{N}{N-1},1}(\Omega)} \leq \tilde{\mathcal{S}}_1 \|u\|_{BV(\Omega)},\ \forall u\in \bvo,
\end{equation}
where $\tilde{\mathcal{S}}_1 = \left( (N-1)\omega_{N}^{1/N} \right)^{-1}$ is the optimal constant (see~\cite{Lorentz}).

Throughout this work, the use of Lorentz spaces is restricted to the above embedding and basic inequalities. Recall that a measurable function $v\colon \Omega\to \R$ is said to belong to the Lorentz space $L^{p,q}(\Omega)$, with $1\leq p < +\infty$ and $1\leq q\leq +\infty$, if the following quasinorm is finite:
\begin{equation*}
\|v\|_{L^{p,q}(\Omega)}=\dis \begin{cases}
    \dis \left(\int_{0}^{+\infty} \left(t^\frac{1}{p} v_*(t)\right)^q\frac{\,dt}{t}\right)^{\frac{1}{q}}, & \text{if } 1\leq q<+\infty, \\
    \dis \sup_{t>0} t^{\frac{1}{p}}v_*(t), & \text{if } q=+\infty.
\end{cases}
\end{equation*}
Here, $v_*$ denotes the decreasing rearrangement of $v$, defined by
\[
v_*(s)=\inf\{t>0: |\{ x\in\Omega: |v(x)|>t\}|\leq s\}.
\]
When $p=q$, the Lorentz space coincides with the standard Lebesgue space, that is, $L^{p,p}(\Omega)=L^{p}(\Omega)$. Furthermore, $L^{p,q_1}(\Omega) \subset L^{p,q_2}(\Omega)$ whenever $q_1<q_2$. Moreover, since $|\Omega|<+\infty$, one also has $L^{p_2,q}(\Omega) \subset L^{p_1,r}(\Omega)$ for $p_1<p_2$ and any $1\leq q,r\leq +\infty$.

Finally, we emphasize that the H\"{o}lder inequality
\[
\into |fg| \leq \|f\|_{L^{p,q}(\Omega)} \|g\|_{L^{p',q'}(\Omega)},
\]
holds for $1<p<+\infty$ and $1\leq q\leq +\infty$, where $p' = \frac{p}{p-1}$ and $q' = \frac{q}{q-1}$. This inequality is a direct consequence of the Hardy--Littlewood inequality. We refer the reader to~\cite{PKOF} for a more in-depth introduction to Lorentz spaces.

\subsection{A generalized Gauss-Green formula}

The $L^\infty$-measure-divergence vector fields theory (also known as Anzellotti's theory) plays a fundamental role in problems involving the 1-Laplacian operator. In essence, it develops an entire theory whose ultimate goal is to obtain a Green's formula (see~\eqref{eq:Green}) for very irregular functions. To be precise, let us begin by defining the set
\[
\DM(\Omega):= \big\{ z\in L^\infty(\Omega)^N : \operatorname{div}z \in \mathcal{M}(\Omega) \big\},
\]
and let us denote by $\DM_{\rm loc}(\Omega)$ its local version, i.e., the set of bounded vector fields $z$ with ${\rm div}z\in \mathcal{M}_{\rm loc} (\Omega)$. It is worth noting that, as proved in~\cite[Proposition~3.1]{ChenFrid}, $\operatorname{div}z$ is absolutely continuous with respect to the measure $\mathcal{H}^{N-1}$ for any vector field $z\in \DM (\Omega)$. Given a vector field $z\in \DM_{\rm loc} (\Omega)$ and a function $u\in BV_{\rm loc}(\Omega)$, we define the distribution $(z,Du) \colon C_c^\infty(\Omega) \to \R$ as
\begin{equation} \label{eq:Pairing}
\langle(z,Du),\varphi\rangle:=-\int_\Omega u^*\varphi\operatorname{div}z-\int_\Omega uz\cdot\nabla\varphi, \,\quad \forall \varphi\in C_c^\infty(\Omega).
\end{equation}
This distribution was introduced by Anzellotti (\cite{Anz}) and it defines a generalized dot product between $z$ and $Du$. Observe that, without further assumptions, the first integral in~\eqref{eq:Pairing}, and hence the one in~\eqref{eq:Green},  is not necessarily finite. Therefore, in order to give meaning to this term, one typically needs to pay an additional price by increasing the regularity of either $z$ or $u$. For instance, one may assume $z\in \DM(\Omega)$ with $\operatorname{div}z\in L^N(\Omega)$ and $u\in \bvo$ (see~\cite{Anz}), or alternatively $z\in \DM(\Omega)$ and $u\in \bvo \cap \lio$ (see~\cite{Ca}).

The most general scenario is studied in~\cite{CrDec} (see also~\cite{DecGiSe}). For $z\in \DM_{\rm loc} (\Omega)$ and $u^*\in BV_{\rm loc}(\Omega) \cap L_{\rm loc}^1(\Omega,  |\operatorname{div}z|)$, the authors showed that the distribution $(z,Du)$ is, in fact, a Radon measure that has, locally, finite total variation. Furthermore, for every Borel set $B$ and every open set $U$ with $B \subset U \subset \Omega$ it holds
\begin{equation}
\label{eq:AbsCont}
    \left| \int_B (z, Du) \right| \leq \int_B |(z, Du)| \leq \|z\|_{L^{\infty}(U)^N} \int_B |Du|.
\end{equation}
The measure $(z,Du)$ actually generalizes the usual dot product. It admits a decomposition into two parts with respect to the $N$-dimensional Lebesgue measure $\mathcal{L}^N$: an absolutely continuous part and a singular one. The absolutely continuous part, denoted by $(z,Du)^a$, satisfies that $(z,Du)^a = z\cdot D^a u = z\cdot \nabla u\, \mathcal{L}^N$ (see~\cite[Theorem~4.12]{CrDec}). Thus, for functions $\varphi \in W^{1,1}(\Omega)$, the pairing $(z,D\varphi)$ just turns out to be $(z,D\varphi) = z\cdot \nabla \varphi\, \mathcal{L}^N$.

In practice, due to the nature of our problem, we always deal with a non-positive $\operatorname{div}z$. In this case, any $u\in BV_{\rm loc}(\Omega)$ satisfies $u^*\in L_{\rm loc}^1(\Omega,  |\operatorname{div}z|)$ provided that $z\in \DM_{\rm loc}(\Omega)$ (see \cite[Remark~4.14]{CrDec}). Then, in our particular setting, the entire theory can be developed under the sole assumptions that $z\in \DM_{\rm loc}(\Omega)$ and $u\in BV_{\rm loc}(\Omega)$. We also emphasize that if stronger regularity holds, namely, if $u\in BV(\Omega)$ and $z\in \DM(\Omega)$ with $\operatorname{div}z \leq 0$, then $u^*\in L^1(\Omega, \operatorname{div}z)$ (\cite[Corollary~3.3]{AbDaSe}). To simplify notation, we will write $u$ instead of $u^*$ whenever no ambiguity can arise; for instance, when $u$ lacks a jump part.

A direct consequence of~\eqref{eq:AbsCont} is that $(z,Du)$ is absolutely continuous with respect to the measure $|Du|$. Denoting by \(\theta(z, Du, \cdot)\)
the Radon-Nikodym derivative of $(z, Du)$ with respect to $|Du|$, it holds
\begin{equation*}
	(z, Du) = \theta(z,Du,x) \, {|Du|} \text{ as measures in } \Omega.
\end{equation*}
It can be deduced that this derivative behaves well with respect to the composition, as the following result shows.

\begin{lemma}
	\label{lem:Composition}
	Let $z \in \mathcal{DM}^\infty_{\rm loc}(\Omega)$ and $u \in BV_{\rm loc}(\Omega) \cap L_{\rm loc}^1(\Omega,  | \operatorname{div}z|)$. Let $\Psi: I \to \mathbb{R}$ be a continuous strictly increasing function, with $I \subseteq \mathbb{R}$ an interval. Assume that $u(\Omega)\subseteq I$ and $\Psi(u) \in BV_{\rm loc}(\Omega) \cap L_{\rm loc}^1(\Omega,  |\operatorname{div}z|)$.
	\begin{enumerate}
		\item[(i)] If $\Psi$ is locally absolutely continuous in $\mathrm{Int}(I)$ then
		\begin{equation}\label{theta_i}
		\theta(z, D\Psi(u),x) = \theta(z, Du,x) \quad \text{for } |D\Psi(u)|\text{-a.e. } x \in \Omega.
		\end{equation}
		Moreover, $|D\Psi(u)| \ll|Du|$ and, as a consequence, $(z,Du)=|Du|$ implies $(z,D\Psi(u))= |D\Psi(u)|$.
		\item[(ii)] If $\Psi'>0$ a.e. in $I$ then
		\begin{equation}\label{theta_ii}
		\theta(z, D\Psi(u),x) = \theta(z, Du,x) \quad \text{for } |Du|\text{-a.e. } x \in \Omega.
		\end{equation}
		Moreover, $|Du|\ll |D\Psi(u)|$ and, as a consequence, $(z,D\Psi(u))= |D\Psi(u)|$ as measures implies $(z,Du)=|Du|$ as measures.
	\end{enumerate}
\end{lemma}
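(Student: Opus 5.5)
The plan is to deduce the lemma from the known one-dimensional-level-set structure of Anzellotti's pairing. The main tool is the coarea-type representation of $\theta(z,Du,\cdot)$ due to Anzellotti, extended to the $\DM_{\rm loc}$ setting by Crasta--De Cicco. For $u\in BV_{\rm loc}(\Omega)$, it gives that $\theta(z,Du,x)=\theta(z,D\chi_{\{u>t\}},x)$ for $|D\chi_{\{u>t\}}|$-a.e.\ $x$ and for $\mathcal L^1$-a.e.\ $t$. Since $\Psi$ is continuous and strictly increasing, for every $t$ we have $\{\Psi(u)>\Psi(t)\}=\{u>t\}$. Hence the superlevel sets of $\Psi(u)$ are exactly the superlevel sets of $u$, reparametrized by $s=\Psi(t)$.

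The comparison of $\theta(z,D\Psi(u),\cdot)$ and $\theta(z,Du,\cdot)$ then reduces to showing two things. First, the parametrization preserves "a.e.\ level". Second, the coarea formulas
\[
|Du|=\int_{\R}|D\chi_{\{u>t\}}|\,dt,\qquad |D\Psi(u)|=\int_{\R}|D\chi_{\{\Psi(u)>s\}}|\,ds
\]
express the two total variations through the same family of perimeter measures.

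\textbf{Step 1 (case (i)).} Assume $\Psi$ is locally absolutely continuous. Then $\Psi$ maps $\mathcal L^1$-null sets to $\mathcal L^1$-null sets (Lusin's (N) property). Let $E$ be the set of $t$ where the level-set identity for $u$ fails, so $|E|=0$. Then $|\Psi(E)|=0$, and for a.e.\ $s=\Psi(t)$ the identity for $u$ holds at $t$. Applying the level-set identity for $\Psi(u)$ at a.e.\ $s$ and intersecting the two full-measure sets, we get $\theta(z,D\Psi(u),x)=\theta(z,Du,x)$ for $|D\chi_{\{\Psi(u)>s\}}|$-a.e.\ $x$, for a.e.\ $s$. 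Integrating in $s$ via coarea yields \eqref{theta_i} $|D\Psi(u)|$-a.e.

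For the absolute continuity $|D\Psi(u)|\ll|Du|$, take a Borel set $B$ with $|Du|(B)=0$. Then $|D\chi_{\{u>t\}}|(B)=0$ for a.e.\ $t$, and by the (N) property this gives $|D\chi_{\{\Psi(u)>s\}}|(B)=0$ for a.e.\ $s$. Therefore $|D\Psi(u)|(B)=0$. The final consequence follows immediately: $(z,Du)=|Du|$ means $\theta(z,Du,\cdot)=1$ $|Du|$-a.e., hence also $|D\Psi(u)|$-a.e. Then \eqref{theta_i} gives $\theta(z,D\Psi(u),\cdot)=1$.

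\textbf{Step 2 (case (ii)).} Assume $\Psi'>0$ a.e. Then the inverse $\Psi^{-1}$ is continuous and strictly increasing on $\Psi(I)$, and maps null sets to null sets. Indeed, if $|F|=0$ with $F\subset\Psi(I)$, then $\Psi^{-1}(F)$ is null, because $\int_{\Psi^{-1}(F)}\Psi'\le|F|=0$ and $\Psi'>0$ a.e. So the roles are exchanged. We now run the same argument with the parametrization $t=\Psi^{-1}(s)$, and conclude that the identity holds $|Du|$-a.e. and that $|Du|\ll|D\Psi(u)|$. The implication $(z,D\Psi(u))=|D\Psi(u)|\Rightarrow(z,Du)=|Du|$ then follows as before.

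\textbf{Main obstacle.} The delicate point is the level-set representation of $\theta$ in the present generality, where $z\in\DM_{\rm loc}$ and $u$ is merely in $L^1_{\rm loc}(\Omega,|\operatorname{div}z|)$. It must also be checked that the exceptional null sets in $t$ are handled correctly under $\Psi$. If the coarea characterization is not directly available in \cite{CrDec}, I would first prove the statement for truncations $T_k(u)$, where boundedness makes Anzellotti's original argument work. I would then pass to the limit using locality of the pairing on $\{|u|<k\}$.
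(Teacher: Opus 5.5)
Your proposal is correct and is essentially the paper's proof. It uses the same ingredients: the level-set characterization of $\theta$ from Crasta--De Cicco (their Theorem~4.2, which already covers $z\in\mathcal{DM}^\infty_{\rm loc}(\Omega)$ and $u\in BV_{\rm loc}(\Omega)\cap L^1_{\rm loc}(\Omega,|\operatorname{div}z|)$, so your truncation fallback is not needed), the identity $\{u>t\}=\{\Psi(u)>\Psi(t)\}$, the Lusin (N) property of $\Psi$ for (i), null-set preservation by $\Psi^{-1}$ for (ii), and then the coarea formula. Your justification of the null-set preservation via $\int_{\Psi^{-1}(F)}\Psi'\le|F|$ (valid for Borel $F$, e.g.\ by pushing $D\Psi$ forward under $\Psi$) actually fills in a step the paper only asserts.
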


\begin{remark}\label{rem:Composition_nondecr} 
    Lemma \ref{lem:Composition}(i) also holds if one simply assumes that $\Psi$ is non-decreasing instead of increasing. Indeed, for $\varepsilon>0$ we can apply Lemma \ref{lem:Composition}(i) to the function $\Psi_\varepsilon(t):=\Psi(t)+\varepsilon t$ which is continuous, strictly increasing and locally absolutely continuous in $\mathrm{Int}(I)$. First, observe that $|D\Psi(u)|\ll |Du|$ since $|D\Psi(u)| \leq |D\Psi_1(u)|+|Du|$ and $|D\Psi_1(u)| \ll |Du|$ by Lemma~\ref{lem:Composition}(i). On the other hand, by Lemma~\ref{lem:Composition}(i) and the linearity of the pairing, we have
	\[
	(z,D\Psi(u))+\varepsilon(z,Du)=(z,D\Psi_\varepsilon(u))=\theta(z, Du,x)|D\Psi(u)+\varepsilon Du| \quad \text{as measures in } \Omega.
	\]
	Let $B\subset\subset\Omega$ be a Borel set. Since
	\[
	\big|\,|D\Psi(u)+\varepsilon Du|(B)-|D\Psi(u)|(B)\,\big|\le\varepsilon|Du|(B)
	\qquad\text{and}\qquad
	|(z,Du)|(B)\le\|z\|_{L^\infty(\Omega)^N}|Du|(B)
	\]
	(see \eqref{eq:AbsCont}), we deduce that
	\[
	(z,D\Psi(u))(B)=\theta(z, Du,x)|D\Psi(u)+\varepsilon Du|(B)-\varepsilon(z,Du)(B)\xrightarrow[\varepsilon\to0]{} \theta(z, Du,x)|D\Psi(u)|(B).
	\]
	As the left-hand side does not depend on $\varepsilon$, we get $(z,D\Psi(u))(B)=\theta(z, Du,x)|D\Psi(u)|(B)$ for every Borel set $B\subset\subset\Omega$, that is, $\theta(z, D\Psi(u),x) = \theta(z, Du,x)$ for $|D\Psi(u)|$-a.e. $x \in \Omega$.
\end{remark}

\begin{remark}\label{rem:Composition_nonincr}
	Also note that if  $\Psi$ is continuous, non-increasing and locally absolutely continuous in $\mathrm{Int}(I)$, applying the above Remark \ref{rem:Composition_nondecr} to $-\Psi$ and using $(z,D(-v))=-(z,Dv)$, $|D(-v)|=|Dv|$, we obtain that $(z,Du)=|Du|$ implies $(z,D\Psi(u))= -|D\Psi(u)|$.
	Analogously, if $\Psi'<0$ a.e.\ in $I$, an application of Lemma \ref{lem:Composition} (ii) gives that $(z,D\Psi(u))= -|D\Psi(u)|$ implies $(z,Du)=|Du|$.
\end{remark}

\begin{remark}\label{rem:Composition_suff}
	As a consequence of the chain rule, we point out some sufficient conditions for the assumptions of Lemma~\ref{lem:Composition}.
	\begin{enumerate}
		\item[(a)] If $u\in BV_{\rm loc}(\Omega)$, $\Psi$ is locally Lipschitz in $I$ and for every $\omega\subset\subset\Omega$ there exists a compact set $K_\omega\subset I$ such that $u\in K_\omega$ a.e. in $\omega$, then $\Psi$ is locally absolutely continuous in ${\rm Int}(I)$ and $\Psi(u)\in BV_{\rm loc}(\Omega)\cap L^\infty_{\rm loc}(\Omega)$; in particular, $u,\Psi(u)\in L^1_{\rm loc}(\Omega,\operatorname{div}z)$.
		\item[(b)] If $\Psi$ is bounded and Lipschitz in $I$, the same conclusion holds for every $u \in BV_{\rm loc}(\Omega)$, without assuming $u$ locally bounded.
		\item[(c)] If $I=[a,b]$ and $\Psi \in C([a,b]) \cap C^1((a,b])$ is non-decreasing, then $\Psi$ is locally absolutely continuous in $(a,b)$, regardless of the behaviour of $\Psi'$ near $a$. If moreover $\Psi'>0$ in $(a,b)$, both (i) and (ii) of Lemma~\ref{lem:Composition} apply once $\Psi(u) \in BV_{\rm loc}(\Omega)$, so that $|Du|$ and $|D\Psi(u)|$ are mutually absolutely continuous and
		\[
		(z,Du)=|Du| \quad \Longleftrightarrow \quad (z,D\Psi(u))= |D\Psi(u)|.
		\]
	\end{enumerate}
\end{remark}

\begin{remark}\label{rem:Composition_sharp}
	The assumptions in (i) and (ii) of Lemma~\ref{lem:Composition} are of different nature and neither implies the other. Continuity and strict monotonicity of $\Psi$, even together with $\Psi \in C^1(I)$, are not enough for the conclusion of (ii): if $\Psi'$ vanishes on a set of positive measure with empty interior (for instance, $\Psi(t)=\int_0^t \operatorname{dist}(r,Z)\,dr$ with $Z$ a fat Cantor set), then $|Du|$ may charge a set which is $|D\Psi(u)|$-negligible. Similarly, $\Psi'>0$ a.e.\ without absolute continuity is not enough for (i). In particular, under the assumptions of~\cite[Proposition~2.8]{Anz} (that is, $\Psi$ increasing and of class $C^1$), the argument yields the identity of (i), whereas the identity $|Du|$-a.e.\ requires in addition that $\Psi'>0$ a.e.
\end{remark}

\begin{proof}[Proof of Lemma~\ref{lem:Composition}]
	The proof relies on a classical argument from \cite{Anz}. We set
	\[
	G=\big\{ x\in \Omega: \theta(z, D\Psi(u),x) \neq \theta(z, Du,x)\big\},
	\]
	and we denote, for any $s, t \in \mathbb{R}$, the level sets of $u$ and $\Psi(u)$ by
	\[
	E_{u,t} := \{x \in \Omega : u(x) > t\} \qquad \text{ and } \qquad E_{\Psi(u), s} := \{x \in \Omega : \Psi(u(x)) > s\}.
	\]
	Taking into account that $u, \Psi(u) \in BV_{\rm loc}(\Omega) \cap L_{\rm loc}^1(\Omega, |\operatorname{div}z|)$, an application of~\cite[Theorem~4.2]{CrDec} leads to the existence of two sets $N_u,\ N_{\Psi (u)}\subset \mathbb{R}$ with $\mathcal{L}^1(N_u)=\mathcal{L}^1(N_{\Psi(u)})=0$ such that
	\begin{align}
		&\text{for any } t\in \mathbb{R}\setminus N_{u}, \text{ it holds }\theta(z, Du, x) = \theta(z, D\chi_{E_{u,t}}, x) \quad \text{for } |D\chi_{E_{u,t}}|\text{-a.e. } x \in \Omega, \label{eq:Pf_L_Comp_2} \\
		&\text{for any } s\in \mathbb{R}\setminus N_{\Psi(u)}, \text{ it holds } \theta(z, D\Psi(u), x) = \theta(z, D\chi_{E_{\Psi(u), s}}, x) \quad \text{for } |D\chi_{E_{\Psi(u), s}}| \text{-a.e. } x \in \Omega. \label{eq:Pf_L_Comp_3}
	\end{align}
	Now observe that as $\Psi$ is continuous and strictly increasing then $E_{u,t}$ and $E_{\Psi(u),\Psi(t)}$ are identical for every $t\in I$ and, equivalently, $E_{u,\Psi^{-1}(s)}$ and $E_{\Psi(u),s}$ are identical for every $s\in\Psi(I)$. Therefore
	\begin{equation} \label{eq:Pf_L_Comp_1}
		D\chi_{E_{u,t}} = D\chi_{E_{\Psi(u), \Psi(t)}} \quad \text{for every } t\in I, \qquad\text{i.e.}\qquad D\chi_{E_{u,\Psi^{-1}(s)}} = D\chi_{E_{\Psi(u), s}} \quad \text{for every } s\in\Psi(I).
	\end{equation}
	On the other hand, since $u(\Omega)\subseteq I$, for every $t\in\mathbb{R}\setminus I$ the set $E_{u,t}$ is either $\Omega$ or $\emptyset$, and thus $D\chi_{E_{u,t}}=0$. Similarly, $D\chi_{E_{\Psi(u), s}}=0$ for every $s\in\mathbb{R}\setminus\Psi(I)$.
	
	\medskip
	\noindent\emph{Proof of (i).} 	Gathering together \eqref{eq:Pf_L_Comp_2}, \eqref{eq:Pf_L_Comp_3}, and \eqref{eq:Pf_L_Comp_1}, for every $s\in \mathbb{R}\setminus \left(\Psi(N_u)\cup N_{\Psi(u)} \right)$ we have that
	\begin{equation}
		\label{eq:Pf_L_Comp_4}
		\theta(z, Du, x) = \theta(z, D\chi_{E_{u,\Psi^{-1}(s)}}, x) = \theta(z, D\chi_{E_{\Psi(u),s}},x) = \theta(z, D\Psi(u), x),\quad \text{for } |D\chi_{E_{\Psi(u),s}}|\text{-a.e. } x\in\Omega.
	\end{equation}
	Note that the set $\Psi(N_u)$ has zero $\mathcal{L}^1$-measure, since $\Psi$ is locally absolutely continuous in $I$ and therefore satisfies the Lusin $(N)$ property (see, e.g., \cite[Corollary~3.41]{leoni}). Thus,~\eqref{eq:Pf_L_Comp_4} implies that $|D\chi_{E_{\Psi(u),s}}|(G)=0$ for almost every $s\in\mathbb{R}$ and, thanks to the coarea formula, we conclude that $|D\Psi(u)|(G)=0$. This shows the validity of \eqref{theta_i}.

	Finally, we show that $|D\Psi(u)| \ll |Du|$. Suppose that $B\subset \Omega$ is such that $|Du|(B)=0$. By the coarea formula, $|D\chi_{E_{u,t}}|(B) = 0$ for every $t\in \R\setminus \tilde N_u$ with $\mathcal{L}^1(\tilde N_u)=0$. Using~\eqref{eq:Pf_L_Comp_1}, it follows that $|D\chi_{E_{\Psi(u),s}}|(B) = 0$ for every $s\in \R\setminus \Psi(\tilde N_u)$. Since $\Psi(\tilde N_u)$ has zero $\mathcal{L}^1$-measure because $\Psi$ satisfies the Lusin $(N)$ property, the coarea formula yields $|D\Psi(u)|(B)=0$.
   
	\medskip
	\noindent\emph{Proof of (ii).} In this case, once again gathering together \eqref{eq:Pf_L_Comp_2}, \eqref{eq:Pf_L_Comp_3}, and \eqref{eq:Pf_L_Comp_1}, for every $t\in I\setminus \left(N_u\cup\Psi^{-1}(N_{\Psi(u)}) \right)$ we have that
	\begin{equation}
		\label{eq:Pf_L_Comp_5}
		\theta(z, Du, x) = \theta(z, D\chi_{E_{u,t}}, x) = \theta(z, D\chi_{E_{\Psi(u),\Psi(t)}},x) = \theta(z, D\Psi(u), x),\quad \text{for } |D\chi_{E_{u,t}}|\text{-a.e. } x\in\Omega.
	\end{equation}
	Now, as $\Psi'>0$ a.e. in $I$, one gains that $\Psi^{-1}(N_{\Psi(u)})$ has zero $\mathcal{L}^1$-measure. Recalling that $D\chi_{E_{u,t}}=0$ for $t\in\mathbb{R}\setminus I$,~\eqref{eq:Pf_L_Comp_5} implies that $|D\chi_{E_{u,t}}|(G)=0$ for almost every $t\in\mathbb{R}$ and, thanks to the coarea formula, we conclude that $|Du|(G)=0$. This shows \eqref{theta_ii}.

	Now we show that $|Du| \ll |D\Psi(u)|$. Assume that $B\subset \Omega$ is such that $|D\Psi(u)|(B)=0$. By the coarea formula, $|D\chi_{E_{\Psi(u),s}}|(B) = 0$ for every $s\in \R\setminus \tilde N_{\Psi(u)}$, where $\mathcal{L}^1(\tilde N_{\Psi(u)})=0$. Taking~\eqref{eq:Pf_L_Comp_1} into account, it follows that $|D\chi_{E_{u,t}}|(B) = 0$ for every $t\in \R\setminus \Psi^{-1}(\tilde N_{\Psi(u)})$. Note that $\Psi^{-1}(\tilde N_{\Psi(u)})$ has zero $\mathcal{L}^1$-measure because $\Psi'>0$ a.e. in I. Thus, the coarea formula implies $|Du|(B)=0$.
\end{proof}

A general Leibniz rule for the pairing also holds. In~\cite[Proposition~4.11]{CrDec}, it is shown that if $z\in \DM_{\rm loc}(\Omega)$ and $u,v\in BV_{\rm loc}(\Omega)\cap L^\infty_{\rm loc} (\Omega)$, then
\begin{equation}
\label{eq:Prod}
    (z, D(uv)) = u^*(z,Dv) + v^*(z,Du).
\end{equation}

Since $\Omega$ has a Lipschitz boundary, the outward normal unit vector $\nu(x)$ is defined for $\mathcal H^{N-1}$-almost every $x\in\partial\Omega$. In~\cite{Anz}, it is shown that every $z \in \mathcal{DM}^{\infty}(\Omega)$ possesses
a weak trace on $\partial \Omega$ of the
normal component of $z$ which is denoted by
$[z, \nu]$. This notion of weak trace generalizes the classical one, in the sense that $[z, \nu] = z\cdot \nu$ on $\partial\Omega$ if $z\in C^1(\overline{\Omega}, \R^N)$. Moreover, it verifies
\begin{equation*}
\| [z,\nu] \|_{L^\infty (\partial\Omega)}\leq \|z\|_{L^\infty(\Omega)^N}.
\end{equation*}

A Green-type formula involving the pairing $(z,Du)$ and the weak trace $[z,\nu]$ is stated as follows. 

\begin{proposition}{\cite[Proposition~2.5]{DecGiSe}}
Let $z\in \DM(\Omega)$ and $u\in BV(\Omega) \cap L^1 (\Omega, \operatorname{div}z)$. Then, it holds
\begin{equation} \label{eq:Green}
	\int_{\Omega} u^* \operatorname{div}z + \int_{\Omega} (z, Du) =
	\int_{\partial \Omega} u[z, \nu] \ \dH.
\end{equation}
\end{proposition}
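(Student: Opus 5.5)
The plan is to derive the Green formula from Anzellotti's original version for bounded $BV$ functions, removing the boundedness assumption by truncation. Throughout I use that $\operatorname{div}z$ is a finite Radon measure with $|\operatorname{div}z|\ll\mathcal H^{N-1}$, so $u^*$ is defined $|\operatorname{div}z|$-a.e.; the hypothesis $u\in L^1(\Omega,\operatorname{div}z)$ is read as $u^*\in L^1(\Omega,|\operatorname{div}z|)$.

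\textbf{Bounded case.} For $v\in BV(\Omega)\cap L^\infty(\Omega)$ I first establish
\[
\int_\Omega v^*\operatorname{div}z+\int_\Omega (z,Dv)=\int_{\partial\Omega}v[z,\nu]\,\dH .
\]
\begin{itemize}
\item Approximate $v$ by $v_n\in C^\infty(\Omega)\cap W^{1,1}(\Omega)\cap C(\overline\Omega)$ with $v_n\to v$ strictly in $BV$, $\|v_n\|_\infty\le\|v\|_\infty$, the same boundary trace, and $v_n\to v^*$ $\mathcal H^{N-1}$-a.e. in $\Omega$. The last property follows from the standard mollification construction, as in \cite{Anz,Ca}.
\item For smooth $v_n$ the identity holds by the definition of the weak normal trace: $[z,\nu]$ is defined precisely so that this formula is valid for $W^{1,1}\cap C(\overline\Omega)$ functions.
\item Pass to the limit. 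In the first term, use dominated convergence with respect to $|\operatorname{div}z|$. This is legitimate because $|\operatorname{div}z|\ll\mathcal H^{N-1}$ and the $v_n$ are uniformly bounded.
\item The boundary term is unchanged, since the traces coincide.
\item For the pairing, $(z,Dv_n)\rightharpoonup(z,Dv)$ follows from the distributional definition \eqref{eq:Pairing} tested on $\varphi\in C_c^\infty$. Convergence of total masses is then obtained from the identity itself, namely as the difference of the two other convergent terms.
\end{itemize}
The careful point here is to make sure $(z,Dv)(\Omega)$ really is the limit. I would use the uniform bound $|(z,Dv_n)|\le\|z\|_\infty|Dv_n|$ together with strict convergence (tightness of $|Dv_n|$) to rule out mass escaping to $\partial\Omega$.

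\textbf{General case.} For $u\in BV(\Omega)\cap L^1(\Omega,\operatorname{div}z)$ I apply the bounded case to $T_k(u)$. The three terms behave as follows as $k\to\infty$.
\begin{enumerate}
\item \emph{Divergence term.} We have $T_k(u)^*\to u^*$ pointwise $\mathcal H^{N-1}$-a.e., with $|T_k(u)^*|\le|u^*|$. This is true at both approximate-continuity points and jump points, using the usual convention for $T_k$ applied to the one-sided values. Dominated convergence in $L^1(|\operatorname{div}z|)$ then gives $\int T_k(u)^*\operatorname{div}z\to\int u^*\operatorname{div}z$.
\item \emph{Boundary term.} The traces satisfy $T_k(u)\to u$ in $L^1(\partial\Omega)$, and $[z,\nu]\in L^\infty(\partial\Omega)$, so this term converges.
\item \emph{Pairing term.} The pairing $(z,Du)$ exists as a finite measure by \cite{CrDec}, since $u^*\in L^1_{\rm loc}(|\operatorname{div}z|)$. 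I show $(z,DT_k(u))(\Omega)\to(z,Du)(\Omega)$ as follows.
\begin{itemize}
\item Write $u=T_k(u)+G_k(u)$, so that $(z,Du)=(z,DT_k(u))+(z,DG_k(u))$ by linearity.
\item By \eqref{eq:AbsCont}, $|(z,DG_k(u))|(\Omega)\le\|z\|_\infty|DG_k(u)|(\Omega)$.
\item By the chain rule (Lemma~\ref{lem:ChainRule_AFP}), $|DG_k(u)|\le|Du|\llcorner\{|u|^\vee>k\}$, up to a jump contribution supported where the jump interval meets $\{|s|>k\}$.
\item This bound tends to $0$ because $\bigcap_k\{|u|^\vee>k\}$ is $|Du|$-negligible, as $|Du|\ll\mathcal H^{N-1}$ and $u^\vee<\infty$ $\mathcal H^{N-1}$-a.e.
\end{itemize}
\end{enumerate}
The main obstacle is this last step: making precise, via the chain rule, that the total variation $|DG_k(u)|(\Omega)$ vanishes as $k\to\infty$, including the contribution of the jump part.
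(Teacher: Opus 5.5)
Your argument is correct and follows the standard route behind the cited result (the paper gives no proof of its own, only the reference to \cite{DecGiSe}): apply Green's formula for bounded $BV$ functions to $T_k(u)$, use dominated convergence in the divergence and boundary terms, and use $|(z,Du)-(z,DT_k(u))|(\Omega)\le\|z\|_{L^\infty(\Omega)^N}|DG_k(u)|(\Omega)\to 0$; this last limit, which you single out as the main obstacle, is in fact immediate from the coarea formula, since $|DG_k(u)|(\Omega)=\int_{\{|t|>k\}}|D\chi_{\{u>t\}}|(\Omega)\,dt$. The only slip is in your sketch of the bounded case (which you may simply quote, e.g.\ from \cite{Ca}): approximants in $C(\overline\Omega)$ cannot have the same trace as $v$ when that trace is discontinuous, so either use the Anzellotti--Giusti approximants in $C^\infty(\Omega)\cap W^{1,1}(\Omega)$ after extending the defining identity of $[z,\nu]$ to bounded functions of that class by one more approximation, or drop the ``same trace'' requirement and use the continuity of the trace under strict convergence.
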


Finally, as we have said, in practice we always deal with a non-positive $\operatorname{div}z$. In this case, one has the following regularity result.
\begin{lemma}{\cite[Lemma~2.3]{GOP}}
\label{lem:Global_z}
Let $z\in L^{\infty}(\Omega)^N$ be such that
\[
-\operatorname{div}z \geq 0 \text{ in } \mathcal{D}'(\Omega).
\]
Then $\operatorname{div}z \in \mathcal{M}(\Omega)$ and $z\in \DM(\Omega)$.  
\end{lemma}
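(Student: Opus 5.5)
The plan is to test $-\operatorname{div}z\ge 0$ against suitable nonnegative functions, obtain a uniform bound on the total variation of $-\operatorname{div}z$ on compact subsets, and then extend this bound up to the boundary by exploiting the boundedness of $z$ and the Lipschitz regularity of $\partial\Omega$.

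\emph{Step 1 (local measure).} A nonnegative distribution is a nonnegative Radon measure. So $\mu:=-\operatorname{div}z$ is a nonnegative Radon measure on $\Omega$. What remains is to show that $\mu(\Omega)<+\infty$.

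\emph{Step 2 (uniform mass bound).} For $\varepsilon>0$ we take a Lipschitz cutoff $\varphi_\varepsilon\in W^{1,\infty}_0(\Omega)$ with $0\le\varphi_\varepsilon\le 1$ and $\varphi_\varepsilon=1$ on $\Omega_\varepsilon:=\{x\in\Omega:\operatorname{dist}(x,\partial\Omega)>\varepsilon\}$. The natural choice is
\[
\varphi_\varepsilon(x)=\min\{1,\operatorname{dist}(x,\partial\Omega)/\varepsilon\}.
\]
Since $\varphi_\varepsilon$ has compact support, mollifying it gives admissible test functions. Hence
\[
\mu(\Omega_\varepsilon)\le\into\varphi_\varepsilon\,\mu=\into z\cdot\nabla\varphi_\varepsilon\le \frac{\|z\|_{L^\infty(\Omega)^N}}{\varepsilon}\,|\{0<\operatorname{dist}(x,\partial\Omega)<\varepsilon\}|.
\]
For a bounded Lipschitz domain, the volume of the boundary strip satisfies $|\{\operatorname{dist}<\varepsilon\}|\le C\varepsilon$ for small $\varepsilon$, with $C$ comparable to $\mathcal{H}^{N-1}(\partial\Omega)$ up to the Lipschitz character. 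Therefore $\mu(\Omega_\varepsilon)\le C\|z\|_{L^\infty(\Omega)^N}$ uniformly in $\varepsilon$.

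\emph{Step 3 (conclusion).} Letting $\varepsilon\to0$ and using monotone convergence gives $\mu(\Omega)\le C\|z\|_{L^\infty(\Omega)^N}<\infty$. Hence $\operatorname{div}z\in\mathcal{M}(\Omega)$, and since $z\in L^\infty(\Omega)^N$, we conclude $z\in\DM(\Omega)$.

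The main obstacle is the strip estimate $|\{\operatorname{dist}(\cdot,\partial\Omega)<\varepsilon\}|=O(\varepsilon)$. I would prove it by covering $\partial\Omega$ with finitely many Lipschitz graph charts. In each chart the strip is contained in the set $\{(x',t): g(x')-\varepsilon\sqrt{1+L^2}<t<g(x')\}$, whose volume is $O(\varepsilon)$. Alternatively, one can avoid distance functions entirely: inside each chart, use cutoffs that depend only on the vertical coordinate relative to the graph, combined with a partition of unity. In that case one must also control the error terms $\into \varphi_\varepsilon z\cdot\nabla\psi_i$ produced by the partition functions $\psi_i$, which is straightforward since $|\nabla\psi_i|$ is bounded.
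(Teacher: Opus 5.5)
The paper does not prove this lemma; it quotes it from \cite[Lemma~2.3]{GOP}, so there is no in-paper argument to compare yours with. Your route is the standard one and is sound in substance: a nonnegative distribution is a locally finite nonnegative measure, and you then get a uniform mass bound from boundary-layer cutoffs and the $O(\varepsilon)$ volume of the inner strip. However, Step~2 contains a false claim that you need to repair.

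The cutoff $\varphi_\varepsilon(x)=\min\{1,\operatorname{dist}(x,\partial\Omega)/\varepsilon\}$ does \emph{not} have compact support in $\Omega$. It is strictly positive at every point of $\Omega$ and vanishes only on $\partial\Omega$. Its mollifications are therefore not in $C_c^\infty(\Omega)$, so the identity $\int_\Omega\varphi_\varepsilon\,\mu=\int_\Omega z\cdot\nabla\varphi_\varepsilon$ is not justified as written. Since $\mu$ is a priori only locally finite, testing with functions that do not vanish near $\partial\Omega$ presupposes exactly the boundary control you are trying to prove.

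The fix is immediate. Use the shifted cutoff $\psi_\varepsilon(x)=\min\{1,(\operatorname{dist}(x,\partial\Omega)-\varepsilon)^+/\varepsilon\}$. It is Lipschitz and supported in the compact set $\{x\in\Omega:\operatorname{dist}(x,\partial\Omega)\ge\varepsilon\}$. It equals $1$ on $\Omega_{2\varepsilon}$, and $|\nabla\psi_\varepsilon|\le 1/\varepsilon$ with support in $\{\varepsilon<\operatorname{dist}(\cdot,\partial\Omega)<2\varepsilon\}$. This gives $\mu(\Omega_{2\varepsilon})\le \varepsilon^{-1}\|z\|_{L^\infty(\Omega)^N}\,|\{x\in\Omega:\operatorname{dist}(x,\partial\Omega)<2\varepsilon\}|$, which your strip estimate bounds uniformly in $\varepsilon$. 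Equivalently, you can keep $\varphi_\varepsilon$ but test with the compactly supported $(\varphi_\varepsilon-\delta)^+$ and let $\delta\to0^+$ by monotone convergence.

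With this change, your chart argument for the strip goes through: $g(x')-t\le\sqrt{1+L^2}\,\varepsilon$ by Cauchy--Schwarz, with $\varepsilon$ below a Lebesgue number of the cover. Step~3 also goes through, and you obtain the quantitative bound $|\operatorname{div}z|(\Omega)\le C_\Omega\|z\|_{L^\infty(\Omega)^N}$. Note that the Lipschitz regularity of $\partial\Omega$ is used only through this $O(\varepsilon)$ bound on the inner boundary layer.
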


\subsection{On the jump part of the solutions}

To prove that solutions to~\eqref{eq:PbIntro1} have no jump part, we follow a strategy introduced in~\cite{BalOP} that involves the definition of a pairing different (though similar) from~\eqref{eq:Pairing}. Let $\beta\colon\mathbb R\to\mathbb R$ be a continuous function. Following the idea presented in \cite{MaSe} we define the distribution $\big(z,D\beta(v)^\#\big)\colon C^\infty_c(\Omega)\to\mathbb R$ as
\begin{equation}\label{eq:new_Pairing}
	\big\langle\big(z,D\beta(v)^\#\big),\varphi\big\rangle:=-\int_\Omega\beta(v)^\#\varphi\,\operatorname{div}z-\int_\Omega\beta(v)\,z\cdot\nabla\varphi,\qquad\forall\varphi\in C^\infty_c(\Omega),
\end{equation}
where $\beta(v)^\#$ is defined in \eqref{eq:def_beta_hash}. Let us prove the following proposition which shows that the previous pairing defines a finite Radon measure.
\begin{proposition}\label{prop:sharp_pairing}
	Let $z\in\mathcal{DM}^\infty_{\rm loc}(\Omega)$, $v\in BV_{\rm loc}(\Omega)$ and let $\beta\colon\mathbb R\to\mathbb R$ be continuous with $\beta(v)\in BV_{\rm loc}(\Omega)\cap L^\infty_{\rm loc}(\Omega)$ and $\beta(v)^\#\in L^\infty_{\rm loc}(\Omega,|\operatorname{div}z|)$. Then $\big(z,D\beta(v)^\#\big)$ is a Radon measure in $\Omega$ and
	\begin{equation}\label{eq:sharp_decomp}
    \big(z,D\beta(v)^\#\big)=\big(z,D\beta(v)\big)+\Big(\frac{\beta(v^+)+\beta(v^-)}{2}-\beta(v)^\#\Big)\operatorname{div}z \llcorner J_v\qquad\text{as measures in }\Omega.
	\end{equation}
	If, in addition, $\beta$ is monotone, then the assumption $\beta(v)^\#\in L^\infty_{\rm loc}(\Omega,|\operatorname{div}z|)$ is automatically satisfied and, for every Borel set $B$ and every open set $U$ with $B\subset U\subset\Omega$,
	\begin{equation}\label{eq:sharp_estimate}
		\Big|\int_B\big(z,D\beta(v)^\#\big)\Big|\le\int_B\big|\big(z,D\beta(v)^\#\big)\big|\le\|z\|_{L^\infty(U)^N}\int_B|D\beta(v)| .
	\end{equation}
\end{proposition}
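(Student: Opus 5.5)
The plan is to compare the new pairing with the standard Anzellotti pairing $(z,D\beta(v))$ from~\eqref{eq:Pairing}, which is a Radon measure by~\cite{CrDec}. Since $\beta(v)\in BV_{\rm loc}(\Omega)\cap L^\infty_{\rm loc}(\Omega)$, its precise representative $\beta(v)^*$ is locally bounded $\mathcal H^{N-1}$-a.e. Because $\operatorname{div}z\ll\mathcal H^{N-1}$ (\cite[Proposition~3.1]{ChenFrid}), $\beta(v)^*\in L^\infty_{\rm loc}(\Omega,|\operatorname{div}z|)$, and therefore $(z,D\beta(v))$ is a Radon measure. Subtracting the two definitions, for every $\varphi\in C^\infty_c(\Omega)$ we get
\[
\langle(z,D\beta(v)^\#),\varphi\rangle-\langle(z,D\beta(v)),\varphi\rangle=\int_\Omega\big(\beta(v)^*-\beta(v)^\#\big)\varphi\operatorname{div}z .
\]
The right-hand side is integration of $\varphi$ against the measure $(\beta(v)^*-\beta(v)^\#)\operatorname{div}z$. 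This measure is locally finite because both representatives lie in $L^\infty_{\rm loc}(\Omega,|\operatorname{div}z|)$, the second by hypothesis. So $(z,D\beta(v)^\#)$ is a Radon measure.

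To identify the difference measure, I localize it according to $S_v$. On $\Omega\setminus S_v$, $v$ has approximate limit $\tilde v$, and continuity of $\beta$ gives $\beta(v)^*=\beta(\tilde v)=\beta(v)^\#$. Since $\mathcal H^{N-1}(S_v\setminus J_v)=0$ and $\operatorname{div}z\ll\mathcal H^{N-1}$, the difference lives on $J_v$. At $x\in J_v$, $\beta(v)$ has one-sided approximate limits $\beta(v^\pm)$; these are obtained from those of $v$ by continuity of $\beta$ together with local boundedness. If $\beta(v^+)\neq\beta(v^-)$, then $x\in J_{\beta(v)}$ and $\beta(v)^*=\frac{\beta(v^+)+\beta(v^-)}{2}$. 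If they coincide, $x$ is a Lebesgue point of $\beta(v)$ with the same value. This proves~\eqref{eq:sharp_decomp}. The point requiring care is transferring approximate limits through $\beta$ with $\beta$ only continuous, not Lipschitz. I would handle it with the $L^1$ convergence on half-balls and uniform continuity of $\beta$ on a compact interval containing the essential range of $v$ near $x$. Here I intend to use that $v\in BV_{\rm loc}$ has one-sided limits, plus dominated convergence for the truncations $T_k(v)$.

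For monotone $\beta$, $\beta(v)^\#$ is an average of $\beta$ over $[v^-,v^+]$. It therefore lies between $\beta(v^-)$ and $\beta(v^+)$, and is bounded by the local essential bound of $\beta(v)$, which settles the integrability assumption. For~\eqref{eq:sharp_estimate}, on $B\setminus J_v$ the measure coincides with $(z,D\beta(v))$, and~\eqref{eq:AbsCont} applies there. On $J_v$, I would compute $(z,D\beta(v)^\#)\llcorner J_v$ directly. By the analogue of \cite[Proposition~4.5/Theorem~4.12]{CrDec} for the jump part of the pairing, $(z,Dw)\llcorner J_w=\frac{[z,\nu]^+-[z,\nu]^-}{2}$-type traces times $(w^+-w^-)$. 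Combined with $\operatorname{div}z\llcorner J_v=([z,\nu_v]^+-[z,\nu_v]^-)\mathcal H^{N-1}$, this expresses the density of $(z,D\beta(v)^\#)$ on $J_v$ as
\[
\big(\beta(v^+)-\beta(v)^\#\big)[z,\nu_v]^+ + \big(\beta(v)^\#-\beta(v^-)\big)[z,\nu_v]^- .
\]
Monotonicity makes both coefficients share the sign of $\beta(v^+)-\beta(v^-)$, so their absolute values sum to $|\beta(v^+)-\beta(v^-)|$. Since the one-sided normal traces are bounded by $\|z\|_{L^\infty(U)^N}$, the density is at most $\|z\|_{L^\infty(U)^N}|\beta(v^+)-\beta(v^-)|\,\mathcal H^{N-1}$, which is exactly $\|z\|_{L^\infty(U)^N}|D\beta(v)|\llcorner J_v$. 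I expect this jump computation, namely the precise one-sided trace representation of the pairing and of $\operatorname{div}z$ on the rectifiable set $J_v$, to be the main obstacle, and I would rely on the fine properties in~\cite{CrDec} for it.
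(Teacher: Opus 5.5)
Your proposal is correct and follows essentially the paper's proof: subtract the two pairings to get \eqref{eq:sharp_decomp}, then on the $C^1$ pieces covering $J_v$ obtain the density $\big(\beta(v^+)-\beta(v)^\#\big)[z,\nu]^+ + \big(\beta(v)^\#-\beta(v^-)\big)[z,\nu]^-$ and conclude by monotonicity. The paper derives this density directly from $\operatorname{div}(\beta(v)z)-\beta(v)^\#\operatorname{div}z$ via \cite[Proposition~3.4]{AmCrMa} and \cite[Corollary~3.2]{CrDec}, rather than through the jump formula for $(z,D\beta(v))$. That jump coefficient is $\frac{[z,\nu]^++[z,\nu]^-}{2}$, with a plus sign, which is what your final density actually requires.

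One small correction: $v$ is not assumed locally bounded, so uniform continuity of $\beta$ on the essential range of $v$ near $x$ is not available. It is also not needed: continuity of $\beta$ at $v^\pm(x)$, Chebyshev's inequality on half-balls and the local bound on $\beta(v)$ already give the one-sided approximate limits $\beta(v^\pm(x))$.
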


\begin{proof}
	First observe that, for every $\varphi\in C^\infty_c(\Omega)$, subtracting \eqref{eq:Pairing} (with $u=\beta(v)$) from \eqref{eq:new_Pairing} gives
	\[
	\big\langle\big(z,D\beta(v)^\#\big),\varphi\big\rangle=\big\langle\big(z,D\beta(v)\big),\varphi\big\rangle+\int_\Omega\big(\beta(v)^*-\beta(v)^\#\big)\varphi\,\operatorname{div}z .
	\]
	The first term on the right is a Radon measure satisfying \eqref{eq:AbsCont}. The second is the measure $(\beta(v)^*-\beta(v)^\#)\operatorname{div}z$, since $\beta(v)^*-\beta(v)^\#$ is a Borel function which is locally bounded $|\operatorname{div}z|$-a.e. Hence $(z,D\beta(v)^\#)$ is a Radon measure. Since $\mathcal H^{N-1}(S_v\setminus J_v)=0$ and $|\operatorname{div}z|$ vanishes on $\mathcal H^{N-1}$-negligible sets, the expressions for $\beta(v)^*$ above give \eqref{eq:sharp_decomp}.
	
	Let now $\beta$ be monotone. By  \eqref{eq:sharp_decomp}, $(z,D\beta(v)^\#)\llcorner(\Omega\setminus J_v)=(z,D\beta(v))\llcorner(\Omega\setminus J_v)$. Hence \eqref{eq:AbsCont} gives \eqref{eq:sharp_estimate} for $B\setminus J_v$. It remains to show that \eqref{eq:sharp_estimate} holds for $B\subseteq J_v$
	
	Since $J_v$ is countably $\mathcal H^{N-1}$-rectifiable \cite[Theorem 3.78]{AFP}, then there exist $C^1$ hypersurfaces $\xi_i$ such that $\mathcal{H}^{N-1} \left( S_{v} \setminus \bigcup_{i=1}^\infty \xi_i \right) =0$. These hypersurfaces are oriented, $\mathcal H^{N-1}$-a.e.\ on $\xi_i\cap J_v$, by $\nu=\nu_v$. By countable additivity it suffices to prove \eqref{eq:sharp_estimate} for Borel sets $B\subset\xi_i\cap J_v\cap U$. The traces of $\beta(v)$ on the two sides of $\xi_i$ are $\beta(v^\pm)$ $\mathcal H^{N-1}$-a.e.\ on $\xi_i\cap J_v$. By \eqref{eq:new_Pairing}, $(z,D\beta(v)^\#)=\operatorname{div}(\beta(v)z)-\beta(v)^\#\operatorname{div}z$; then applying \cite[Proposition 3.4]{AmCrMa} to $z$ and to $\beta(v)z$, together with \cite[Corollary 3.2]{CrDec}, we obtain
	\begin{equation}\label{eq:sharp_density}
		\big(z,D\beta(v)^\#\big) \llcorner(\xi_i\cap J_v)=\Big([z,\nu]^+\big(\beta(v^+)-\beta(v)^\#\big)+[z,\nu]^-\big(\beta(v)^\#-\beta(v^-)\big)\Big)\mathcal H^{N-1}\llcorner(\xi_i\cap J_v),
	\end{equation}
	with $|[z,\nu]^\pm|\le\|z\|_{L^\infty(U)^N}$ on $\xi_i\cap U$. Since, by monotonicity, $\beta(v)^\#$ lies between $\beta(v^-)$ and $\beta(v^+)$, we have $|\beta(v^+)-\beta(v)^\#|+|\beta(v)^\#-\beta(v^-)|=|\beta(v^+)-\beta(v^-)|$. Thus
	\[
	\big|\big(z,D\beta(v)^\#\big)\big|(B)\le\|z\|_{L^\infty(U)^N}\int_B|\beta(v^+)-\beta(v^-)|\,d\mathcal H^{N-1}=\|z\|_{L^\infty(U)^N}|D\beta(v)|(B).
	\]
	The last equality holds because $|D\beta(v)|\llcorner J_v=|\beta(v^+)-\beta(v^-)|\,\mathcal H^{N-1}\llcorner J_v$. Indeed, $J_{\beta(v)}\subset J_v$ up to an $\mathcal H^{N-1}$-negligible set, and $|D\beta(v)|$ vanishes on $J_v\setminus J_{\beta(v)}$, where $\beta(v^+)=\beta(v^-)$.
\end{proof}

\begin{remark}\label{rem:sharp_nonmonotone}
	The monotonicity of $\beta$ cannot be dropped in \eqref{eq:sharp_estimate}, even for smooth $\beta$ and bounded $v$. Let
	\[
	\Omega=(-1,1)^N,\quad\Sigma=\Omega\cap\{x_N=0\},\quad v=\chi_{\{x_N>0\}},\quad\beta(s)=s(1-s),\quad z(x)=-\operatorname{sign}(x_N)\,e_N .
	\]
	Then $v\in BV(\Omega)\cap L^\infty(\Omega)$ with $J_v=\Sigma$, $v^+=1$, $v^-=0$ and $\nu_v=e_N$. Since $\beta(0)=\beta(1)=0$, we have $\beta(v)\equiv0$, and so $|D\beta(v)|=0$. An integration by parts gives $\operatorname{div}z=-2\,\mathcal H^{N-1}\llcorner\Sigma$, so $z\in\mathcal{DM}^\infty(\Omega)$. On $\Sigma$, $\beta(v)^\#=\int_0^1s(1-s)\,ds=\frac16$. Hence \eqref{eq:new_Pairing} yields $(z,D\beta(v)^\#)=\frac13\,\mathcal H^{N-1}\llcorner\Sigma\neq0$, contradicting \eqref{eq:sharp_estimate}. In terms of \eqref{eq:sharp_density}, $[z,\nu]^+=-1$ and $[z,\nu]^-=1$, while $\beta(v)^\#=\frac16$ does not lie between $\beta(v^-)=\beta(v^+)=0$. 
\end{remark}
The next result states that functions $v\in BV_{\rm loc}(\Omega)$ verifying a distributional inequality involving its gradient have no jump part.

\begin{lemma}
\label{lem:no_jump}
Let $\alpha, \beta \colon \R\to \R$ be two continuous increasing functions. Let $z\in \DM_{\rm loc} (\Omega)$ be such that $\|z\|_{\lio^N}\leq 1$, and let $v$ satisfy $v, \alpha(v)\in BV_{\rm loc}(\Omega)$ and $\beta(v)\in BV_{\rm loc}(\Omega) \cap L^\infty_{\rm loc}(\Omega)$. Given $f \in L^1_{\rm loc}(\Omega)$, assume that
\begin{equation}
\label{eq:Pf_No_jump_H1}
-\operatorname{div}z \geq |D\alpha(v)| + f
\end{equation}
and
\begin{equation}
\label{eq:Pf_No_jump_H2}
\left(z, D\beta(v)^\# \right) = |D\beta(v)|.
\end{equation}
Then $D^j v =0$.
\end{lemma}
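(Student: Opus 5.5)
The plan is to localize both hypotheses on the jump set $J_v$ and show that they are incompatible unless $\mathcal H^{N-1}(J_v)=0$. As in the proof of Proposition~\ref{prop:sharp_pairing}, I cover $J_v$, up to an $\mathcal H^{N-1}$-null set, by countably many oriented $C^1$ hypersurfaces $\xi_i$. On $\xi_i\cap J_v$ I choose the orientation $\nu=\nu_v$ so that $v^+>v^-$. It then suffices to prove $\mathcal H^{N-1}(\xi_i\cap J_v\cap U)=0$ for each $i$ and each $U\subset\subset\Omega$.

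\textbf{Step 1: restrict \eqref{eq:Pf_No_jump_H2} to the jump set.} By \eqref{eq:sharp_density}, on $\xi_i\cap J_v$ the measure $(z,D\beta(v)^\#)$ has density
\[
[z,\nu]^+\big(\beta(v^+)-\beta(v)^\#\big)+[z,\nu]^-\big(\beta(v)^\#-\beta(v^-)\big)
\]
with respect to $\mathcal H^{N-1}$. The right-hand side $|D\beta(v)|$ has density $\beta(v^+)-\beta(v^-)$ there. Since $\beta$ is (strictly) increasing and $v^+>v^-$, the number $\beta(v)^\#$ is an average of $\beta$ over $(v^-,v^+)$. Hence
\[
\beta(v^-)<\beta(v)^\#<\beta(v^+),
\]
so both brackets are strictly positive, and they sum to $\beta(v^+)-\beta(v^-)$. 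Because $|[z,\nu]^\pm|\le\|z\|_{\lio^N}\le1$, equality in \eqref{eq:Pf_No_jump_H2} forces $[z,\nu]^+=[z,\nu]^-=1$ $\mathcal H^{N-1}$-a.e.\ on $\xi_i\cap J_v$.

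\textbf{Step 2: restrict $\operatorname{div}z$ to the jump set.} Using \cite[Proposition~3.4]{AmCrMa}, as in the previous proof, I get
\[
\operatorname{div}z\llcorner(\xi_i\cap J_v)=\big([z,\nu]^+-[z,\nu]^-\big)\mathcal H^{N-1}\llcorner(\xi_i\cap J_v)=0 .
\]
(Up to the sign convention for which side is ``$+$''; with either convention the difference vanishes.)

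\textbf{Step 3: restrict \eqref{eq:Pf_No_jump_H1} to the jump set.} Since $f\in L^1_{\rm loc}$ is absolutely continuous with respect to $\mathcal L^N$, it does not charge $J_v$. The inequality \eqref{eq:Pf_No_jump_H1} is an inequality between Radon measures, because $z\in\DM_{\rm loc}(\Omega)$. Restricting it to $\xi_i\cap J_v$ gives
\[
0=-\operatorname{div}z\llcorner(\xi_i\cap J_v)\ \ge\ |D\alpha(v)|\llcorner(\xi_i\cap J_v)=\big(\alpha(v^+)-\alpha(v^-)\big)\mathcal H^{N-1}\llcorner(\xi_i\cap J_v).
\]
As $\alpha$ is strictly increasing and $v^+>v^-$ on $J_v$, this density is strictly positive. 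Therefore $\mathcal H^{N-1}(\xi_i\cap J_v)=0$ for every $i$, hence $\mathcal H^{N-1}(J_v)=0$ and $D^jv=0$.

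\textbf{Main obstacle.} The delicate point is the bookkeeping of one-sided normal traces. I need Step 1 and Step 2 to use the same orientation and the same identification of $[z,\nu]^\pm$ with the traces of $z$ on the two sides of $\xi_i$, so that the formula for $\operatorname{div}z\llcorner\xi_i$ and \eqref{eq:sharp_density} match. I also need the justification that $|D\alpha(v)|\llcorner J_v=|\alpha(v^+)-\alpha(v^-)|\,\mathcal H^{N-1}$. This follows as at the end of the proof of Proposition~\ref{prop:sharp_pairing}, via the chain rule, Lemma~\ref{lem:ChainRule_AFP}, applied locally; monotonicity and continuity of $\alpha$ suffice there. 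Strictness of monotonicity (``increasing'') is used twice: to place $\beta(v)^\#$ strictly inside the interval in Step 1, and to get a positive jump of $\alpha(v)$ in Step 3.
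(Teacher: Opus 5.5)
Your proposal is correct and follows essentially the same route as the paper. Both arguments cover $J_v$ by oriented $C^1$ hypersurfaces $\xi_i$, use the one-sided trace formula for $(z,D\beta(v)^\#)$ together with $|[z,\nu]^\pm|\le 1$ to force $[z,\nu]^+=[z,\nu]^-$ on $\xi_i\cap J_v$, conclude that $\operatorname{div}z$ does not charge that set, and then read off from \eqref{eq:Pf_No_jump_H1} (where $f$ plays no role on an $\mathcal L^N$-null set) that $\alpha(v)$, hence $v$, has no jump there.

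Two of your choices make explicit points the paper leaves implicit: using strict monotonicity to put $\beta(v)^\#$ strictly between $\beta(v^-)$ and $\beta(v^+)$, and restricting to $\xi_i\cap J_v$ rather than all of $\xi_i$. The one small inaccuracy is the justification of $|D\alpha(v)|\llcorner J_v=(\alpha(v^+)-\alpha(v^-))\,\mathcal H^{N-1}\llcorner J_v$. It should come from the fine structure of $\alpha(v)\in BV_{\rm loc}(\Omega)$, as at the end of the proof of Proposition~\ref{prop:sharp_pairing}, not from Lemma~\ref{lem:ChainRule_AFP}, which requires $\alpha$ to be Lipschitz.
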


\begin{proof}
We follow the same ideas as in~\cite[Lemma~2.3]{BalOP}. Since $v \in BV_{\rm loc}(\Omega)$, by~\cite[Theorem~3.78]{AFP}, the jump set $J_v$ is a countably $\mathcal{H}^{N-1}$-rectifiable set. Then, there exist $C^1$ hypersurfaces $\xi_i$, oriented $\mathcal H^{N-1}$-a.e.\ on $\xi_i\cap J_v$ by $\nu=\nu_v$, such that $\mathcal{H}^{N-1} \left( S_{v} \setminus \bigcup_{i=1}^\infty \xi_i \right) =0$. By countable additivity, the proof follows once we prove $Dv \llcorner \xi_i=0$ for any $i\in \N$. Observe also that, by strict monotonicity, $J_{\alpha(v)} = J_{\beta(v)} = J_v$.

First of all, note that, by~\cite[Proposition~3.4]{AmCrMa}, one has that
\begin{equation}
\label{eq:Pf_No_jump_1}
\operatorname{div} z \llcorner \xi_i = ([z,\nu]^+ - [z,\nu]^-) \mathcal H^{N-1}\llcorner \xi_i,
\end{equation}
where $[z,\nu]^+$ and $[z,\nu]^-$ are the traces of the normal component of $z$ over $\xi_i$ defined as in~\cite[Definition~3.3]{AmCrMa} (see also~\cite[Section~2.4]{CrDec}). By \eqref{eq:new_Pairing}, $(z,D\beta(v)^\#)=\operatorname{div}(\beta(v)z)-\beta(v)^\#\operatorname{div}z$. Since $\beta(v)\in BV_{\rm loc}(\Omega) \cap L^\infty_{\rm loc}(\Omega)$, we can use~\cite[Proposition~3.1]{CrDec} and~\eqref{eq:Pf_No_jump_1} to deduce that
\begin{equation}
\label{eq:Pf_No_jump_2}
\left(z, D\beta(v)^\# \right)\llcorner \xi_i = \Big( [z,\nu]^+ \left( \beta(v)^+ - \beta(v)^\# \right) + [z,\nu]^- \left( \beta(v)^\# - \beta(v)^- \right) \Big) \mathcal H^{N-1}\llcorner \xi_i.
\end{equation}
Observe that $\beta(v)^\# \in \left[ \min\{\beta(v)^-, \beta(v)^+\}, \max\{\beta(v)^-, \beta(v)^+\} \right]$ as $\beta$ is increasing. Distinguishing between cases $\beta(v)^+ - \beta(v)^- \geq 0$ and $\beta(v)^+ - \beta(v)^- < 0$, one easily obtains from~\eqref{eq:Pf_No_jump_2} that
\begin{equation}
\label{eq:Pf_No_jump_3}
\left(z, D\beta(v)^\# \right) \llcorner \xi_i \leq |\beta(v)^+ - \beta(v)^-|\mathcal H^{N-1}\llcorner \xi_i.
\end{equation}
with the equality sign if and only if $[z,\nu]^+ = [z,\nu]^- = \operatorname{sign}(\beta(v)^+ - \beta(v)^-)$.

Now, since
\[
|D\beta(v)| \llcorner \xi_i = |\beta(v)^+ - \beta(v)^-| \mathcal H^{N-1}\llcorner \xi_i
\]
and we are assuming~\eqref{eq:Pf_No_jump_H2}, we deduce that the equality holds in~\eqref{eq:Pf_No_jump_3}. Therefore, we have
\[
[z,\nu]^+ = [z,\nu]^- \text{ on } \xi_i,
\]
and then, by~\eqref{eq:Pf_No_jump_1}, $\operatorname{div}z = 0$ on $\xi_i$. Consequently, from~\eqref{eq:Pf_No_jump_H1} we obtain that  
\[
|D\alpha(v)|\llcorner \xi_i = |\alpha(v)^+ - \alpha(v)^-| \mathcal H^{N-1}\llcorner \xi_i = 0 
\]
Since $\alpha$ is increasing, then $D v\llcorner \xi_i = 0$. We conclude that $D^j v=0$.
\end{proof}

\section{Setting and main results}
\label{sec:Main}

In this section, we present several  results concerning   existence and   qualitative properties of the solutions to the Dirichlet problem
\begin{equation}
	\label{eq:PbMain}
	\begin{cases}
		-\Delta_1 u = g(u)|Du| + h(u)f & \text{in }\Omega,\\
		u=0 & \text{on } \partial\Omega,
	\end{cases}
\end{equation}
where $0<f\in L^{N,\infty}(\Omega)$. We assume that $g\colon [0,+\infty) \to (0,+\infty]$ and $h\colon [0,+\infty) \to [0,+\infty]$ are continuous functions, finite outside the origin, which may be singular at zero. In what follows, we denote by $\Gamma\colon [0,+\infty) \to [-\infty,+\infty)$ the increasing function
\[
\Gamma(s) = \int_1^s g(t) \, \mathrm{d}t,\ \forall s\geq 0.
\]
Note that, depending on the integrability of $g$ at the origin, $\Gamma(0)$ may be either finite or $-\infty$.  
  The definition of solution for problem~\eqref{eq:PbMain} is given, for further purposes, in the general case of  a positive datum $f\in L^1(\Omega)$.

\begin{defin}\label{defsol}
Let $0<f\in L^1(\Omega)$. A non-negative function $u\in BV(\Omega)$ is a \textit{solution} to problem~\eqref{eq:PbMain} if $D^j u=0$, $\Gamma(u)\in BV_{\rm loc}(\Omega)$, $h(u)f \in L^1_{}(\Omega)$, and if there exists $z\in \mathcal{D}\mathcal{M}^\infty(\Omega)$ with $\|z\|_{L^\infty(\Omega)^N}\le 1$ such that
\begin{gather} 
    \label{eq:def_dist}
    -\operatorname{div}z = g(u)|Du| + h(u)f \text{ as measures in } \Omega,\\
    \label{eq:def_pair}
    (z,Du)=|Du| \text{ as measures in } \Omega,\\
    \label{eq:def_bord}
    u(x)(1+[z,\nu](x))=0 \text{ for  $\mathcal{H}^{N-1}$-a.e. } x \in \partial\Omega.		
\end{gather}
\end{defin}
\begin{remark}
In the above definition, the vector field $z$ represents the formal quotient $\frac{Du}{|Du|}$ that appears in the 1-Laplacian operator. First introduced in the pioneering paper~\cite{AnBaCaMa}, this formulation has become standard in the literature. In~\eqref{eq:def_dist}, $z$ \textit{replaces} $\frac{Du}{|Du|}$ within the divergence operator, while \eqref{eq:def_pair} \textit{identifies} $z$ as the quotient $\frac{Du}{|Du|}$. Condition~\eqref{eq:def_bord} is how Dirichlet boundary conditions are understood in 1-Laplace-type problems. It appears naturally in minimization problems involving the total variation functional (\cite{AnBaCaMa, ACM, Dem}). A consequence of this very weak boundary condition is that solutions are not forced to be identically zero on $\partial \Omega$. 
\end{remark}

\begin{remark} \label{rem:g|Du|}
Since $D^ju=0$ and $\Gamma(u)\in BV_{\rm loc} (\Omega)$, Lemma~\ref{lem:ChainRule} implies that $g(u)|Du|$ is a finite Radon measure and that the identity $\chi_{\{u>0\}}|D\Gamma(u)| = g(u)|Du|$ holds. 
\end{remark}

\begin{remark}
Any solution in the sense of Definition~\ref{defsol} satisfies the weak formulation
\begin{equation}
\label{eq:weak_form}
\into (z, D\phi) - \intdo \phi [z,\nu] \ \dH = \into g(u) \phi |Du| + \into h(u) \phi f, \ \forall \phi \in \bvo\cap\lio.
\end{equation}
Indeed, since $g,h\geq 0$ and $f>0$, it follows from~\eqref{eq:def_dist} that $\operatorname{div}z\leq 0$ and consequently, as $z\in \DM(\Omega)$, one has $\phi^*\in L^1(\Omega, \operatorname{div}z)$ for every $\phi\in\bvo\cap\lio$ (see~\cite[Corollary~3.3]{AbDaSe}). Hence, taking $\phi$ as test function in~\eqref{eq:def_dist} (note that both $g(u)|Du|$ and $h(u)f$ are finite measures) and applying Green's identity~\eqref{eq:Green}, we obtain~\eqref{eq:weak_form}. 
 
\end{remark}

\begin{remark} 
If $h$ is singular at zero (i.e. $h(0)=+\infty$), then any solution $u$ to~\eqref{eq:PbMain} is strictly positive in $\Omega$; indeed, since $h(u)f \in L^1(\Omega)$ and $f>0$, the set $\{u=0\}$ must have zero measure. Conversely, if $h(0)<+\infty$, problem~\eqref{eq:PbMain} may exhibit a degenerate behaviour, allowing for the existence of trivial solutions even for non-trivial datum $f$ (see Section~\ref{sec:trivial_solutions}).
\end{remark}

We first address the existence of solutions to problem~\eqref{eq:PbMain}, dividing our study into two cases depending on whether $g$ is integrable at the origin or not. In either case, we work under the fundamental assumption that $h$ is bounded at infinity, that is,
\begin{equation}
    \label{eq:hyp_h_inf}
    h(\infty) := \limsup_{s\to +\infty} h(s) < +\infty.
\end{equation}
In case the following integrability condition holds
\begin{equation}\label{eq:hyp_g_int}
\lim_{s\to 0^+} \int_1^s g(t)\, \mathrm{d}t > -\infty,
\end{equation}
we establish the existence of a bounded solution without requiring any further growth conditions on the nonlinearities, provided that the norm $\|f\|_{L^{N,\infty}(\Omega)}$ is sufficiently small. This framework generalizes the setting of~\cite{GOP}, where a similar result is shown under the assumption that $g(s)$ is dominated near the origin by a multiple of $s^{-\theta}$ for $\theta\in(0,1)$, and $h(s)$ is dominated by a multiple of $s^{-\gamma}$ for $\gamma>0$. In contrast, the current approach includes nonlinearities such as  $g(s)=\frac{1}{s(1+\ln^2(s))}$ or $h(s)=e^{\frac{1}{s}}$, which fall outside the framework of~\cite{GOP}.

If the integrability condition~\eqref{eq:hyp_g_int} does not hold, the situation becomes more delicate. In this case, $\Gamma$ is unbounded at the origin and additional assumptions are required to ensure that $\Gamma(u)\in L^1_{\rm loc}(\Omega)$. To this end, we suppose that the essential infimum of $f$ in any open set $\omega$ compactly contained in $\Omega$ is strictly positive, that is, we assume that
\begin{equation}
    \label{eq:hyp_f_pos}
    \forall \omega \subset\subset \Omega \ \ \exists c_\omega>0: f\geq c_\omega \ \ae \text{ in } \omega.
\end{equation}
Moreover, we assume the following power-type growth control for both $g$ and $h$: there exists $\theta>1$ and there exist $\gamma, s_0 > 0$ such that
\begin{equation}
    \label{eq:hyp_sing}
    g(s) \leq \frac{1}{s^\theta} \quad \text{and} \quad h(s) \geq \frac{1}{s^\gamma}, \quad \forall s\in(0,s_0).
\end{equation}
Under these assumptions, we show the existence of a bounded solution to~\eqref{eq:PbMain} even when the integrability condition~\eqref{eq:hyp_g_int} fails, provided that the norm $\|f\|_{L^{N,\infty}(\Omega)}$ is small enough. As already mentioned in the introduction, this result reveals a continuity-breaking phenomenon in relation to the non-existence results known for the $p$-Laplacian. 

In what follows, we present our main existence theorem in a unified manner, covering both the cases where $g$ is integrable at the origin and where it is not.

\begin{theorem}
\label{th:Exist}
Let $0<f\in L^{N,\infty}(\Omega)$. Assume that $h$ satisfies~\eqref{eq:hyp_h_inf} and that $\|f\|_{L^{N,\infty}(\Omega)} < \left(\tilde{\mathcal{S}}_1 h(\infty)\right)^{-1}$. Then problem~\eqref{eq:PbMain} has a solution $u\in\bvo\cap\lio$ provided that one of the following conditions holds:
\begin{enumerate}[i)]
    \item $g$ satisfies the integrability condition~\eqref{eq:hyp_g_int}.
    \item f satisfies~\eqref{eq:hyp_f_pos} and~\eqref{eq:hyp_sing} holds.
\end{enumerate}
\end{theorem}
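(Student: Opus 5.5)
We will obtain the solution as a limit of solutions $u_p$ of $p$-Laplacian problems with truncated nonlinearities. For $p>1$ close to $1$ and $n\in\N$, we take bounded nonnegative solutions $u_{n}=u_{p,n}\in W_0^{1,p}(\Omega)\cap\lio$ of
\[
-\Delta_p u_n = g_n(u_n)|\nabla u_n|^{?} + h_n(u_n) f_n,
\]
where $g_n=T_n(g)$, $h_n=T_n(h)$ and $f_n=T_n(f)$; the gradient term is $g_n(u_n)|\nabla u_n|$ (linear growth) or $|\nabla u_n|^p$. Existence of these solutions is supplied by the Appendix. The a priori estimates come first. We test with $e^{\Gamma_n(u_n)}$-type exponentials to absorb the gradient term: setting $\Lambda(s)=\int_0^s e^{\Gamma_n(t)-\Gamma_n(k)}\,dt$ and testing by $e^{-\Gamma_n(u)}\varphi$ cancels the reaction gradient term formally. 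With $\varphi = G_k(u_n)$ and the Lorentz Hölder inequality together with \eqref{eq:emb2}, the smallness $\|f\|_{L^{N,\infty}}\tilde{\mathcal S}_1 h(\infty)<1$ gives uniform $L^\infty$ bounds via a Stampacchia/bootstrap argument on $G_k$. We also obtain uniform $BV$ bounds on $u_n$ and on $\Gamma(u_n)$ locally, the latter from testing with $T_k$-type functions.

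In case i) $\Gamma$ is bounded, so $\Gamma(u_n)$ is bounded in $BV(\Omega)$. We pass to the limit $p\to1$, $n\to\infty$ and obtain $u$ and $\Gamma(u)$ in $BV$. The field $z$ is the weak-$*$ limit of $|\nabla u_n|^{p-2}\nabla u_n$, with $\|z\|_\infty\le1$ by the standard Hölder argument. Identification of the distributional equation goes through the auxiliary field $e^{\Gamma(u)}z$. Since $-\operatorname{div}(e^{\Gamma(u_n)}|\nabla u_n|^{p-2}\nabla u_n)=e^{\Gamma(u_n)}h_n(u_n)f_n$ has no gradient term, it passes to the limit by Fatou and lower semicontinuity. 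We then recover \eqref{eq:def_dist} via the Leibniz rule \eqref{eq:Prod} and Lemma~\ref{lem:ChainRule}, and \eqref{eq:def_pair} by testing with $u_n$ and using lower semicontinuity of $\int\varphi|Du|$ together with Lemma~\ref{lem:Composition}. The boundary condition \eqref{eq:def_bord} follows from the usual comparison of $\int|Du|+\int_{\partial\Omega}u$ with $-\int u\operatorname{div}z$. The absence of jumps follows from Lemma~\ref{lem:no_jump} applied with $\alpha=\Gamma$ and $\beta=e^{\Gamma}$, after proving $(z,De^{\Gamma(u)\#})=|De^{\Gamma(u)}|$.

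In case ii) the main obstacle is that $\Gamma(0)=-\infty$, so we need local positivity of the limit. From \eqref{eq:hyp_f_pos} and $h\ge s^{-\gamma}$ we will derive a uniform local lower bound $u_n\ge c_\omega>0$ in $\omega\subset\subset\Omega$. The route is a comparison/sub-solution argument: $h_n(u_n)f_n\ge c_\omega \min(n,u_n^{-\gamma})$ forces, through the $1$-Laplacian structure, that $u_n$ cannot be small on a set of positive measure. Concretely, we test with $(\delta-u_n)^+$-type functions, for which $-\operatorname{div}z\ge c\,\delta^{-\gamma}$ on $\{u<\delta\}$ while $\|z\|_\infty\le1$ bounds the flux, forcing $|\{u_n<\delta\}\cap\omega|\to0$ and then $\{u_n<\delta\}\cap\omega=\emptyset$ for $\delta$ small. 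Once this holds, $g$ is bounded on $[c_\omega,\infty)$ and $\Gamma(u_n)$ is bounded in $BV(\omega)$. The argument of case i) then applies locally, and $\theta>1$, $\gamma$ control $g(u)|Du|$ near the boundary through Lemma~\ref{lem:Global_z}, which yields $z\in\DM(\Omega)$ globally. I expect this local lower bound, together with keeping the $BV$ estimate of $u$ global rather than only local, to be the hardest step.
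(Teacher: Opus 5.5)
Your case ii) argument is a genuinely different route from the paper's, and it is sound once you run it on the $1$-Laplacian approximations $-\Delta_1u_n=g_n(u_n)|Du_n|+h_n(u_n)f$ supplied by the Appendix (as the paper does). In your $p$-Laplacian scheme it does not work as written, because $|\nabla u_p|^{p-2}\nabla u_p$ is not bounded by $1$ and the flux bound you invoke is unavailable. Test with $(k-u_n)^+\varphi$ and use $(z_n,D(k-u_n)^+)=-|D(k-u_n)^+|$ (Remark~\ref{rem:Composition_nonincr}); this gives
\[
\int_\Omega \varphi\,|D(k-u_n)^+| + \int_\Omega h_n(u_n) f\,(k-u_n)^+\varphi \le \int_\Omega (k-u_n)^+|\nabla\varphi|, \qquad 0\le\varphi\in C^1_c(\Omega).
\]
Take $k=2\delta$. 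Since $h_n(u_n)f\ge\lambda_\delta$ on $\{u_n<2\delta\}\cap B_{R'}$, with $\lambda_\delta\to\infty$ as $\delta\to0$ uniformly in large $n$, the absorption term makes $|\{u_n<\delta\}\cap B_R|$ small. A De Giorgi iteration on the same inequality, combined with the $BV$ Sobolev inequality, then yields $u_n\ge\delta/2$ on $B_r$.

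The paper instead bootstraps $\int_\omega\tilde h_n(u_n)^m\le C_m$ and then needs $|\Gamma|\lesssim\tilde h^\eta$ near $0$, which is exactly where $g\le s^{-\theta}$ enters. Your lower bound uses no growth condition on $g$ at all, and it also makes $h_n(u_n)$ locally bounded. Note too that $\theta$ and $\gamma$ play no role near $\partial\Omega$, and these steps are not hard:
\begin{itemize}
\item finiteness of $g(u)|Du|$ comes from testing with $\phi\equiv1$;
\item Lemma~\ref{lem:Global_z} needs only $-\operatorname{div}z\ge0$;
\item the global $BV$ bound follows from testing with $u_n-\|u_n\|_{L^\infty(\Omega)}$.
\end{itemize}

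The genuine gaps are in the passage to the limit, exactly where the gradient term acts as a reaction.

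\emph{Boundary condition.} It does not follow from the usual comparison. Testing with $u_n\varphi$ leaves $\int_\Omega g_n(u_n)u_n\varphi\,|Du_n|$ on the right-hand side. That term is only lower semicontinuous, so the inequality you need, $\int_\Omega\varphi|Du|+\int_{\partial\Omega}u\varphi\,d\mathcal H^{N-1}\le\dots$, comes out in the wrong direction. The same objection rules out obtaining \eqref{eq:def_pair} by testing with $u_n$; that identity must come from the $e^{\Gamma}$ chain together with Lemma~\ref{lem:Composition}. The paper instead tests with $G_\delta(u_n)^m\varphi$. It collects the reaction term into the nondecreasing function $\tilde\Gamma_{\delta,n}(s)=\int_0^s(m-g_n(t)G_\delta(t))G_\delta(t)^{m-1}\,dt$, so that lower semicontinuity acts on the correct side. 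It then lets $m\to\infty$, using that the defect $\int_0^u gG_\delta^m\,dt/G_\delta(u)^m\le\max_{[\delta,M]}g\,G_\delta(u)/(m+1)$ vanishes.

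\emph{Auxiliary inequality.} The chain giving $(z,D(e^{\Gamma(u)})^\#)=|De^{\Gamma(u)}|$, and hence $D^ju=0$, \eqref{eq:def_pair} and \eqref{eq:def_dist}, needs $-\operatorname{div}(e^{\Gamma(u)}z)\le h(u)fe^{\Gamma(u)}$. Fatou applied to $\int_\Omega h_n(u_n)e^{\Gamma_n(u_n)}f\varphi$ gives only the opposite inequality. In case ii) your lower bound rescues this by dominated convergence. Case i), however, allows $h(0)=+\infty$ (e.g.\ $h(s)=e^{1/s}$) with $f$ not bounded below on compact sets, and then no lower bound is available. This is what the paper's test function $S_\delta(u_n)e^{\Gamma_n(u_n)}\varphi$ is for: it cuts off the region where $h_n(u_n)$ blows up. The price, $\int_\Omega\varphi|D\Psi_{\delta,n}(u_n)|$, sits on the side where lower semicontinuity gives the right inequality.

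\emph{Minor points.} The exponential that cancels a reaction gradient term is $e^{+\Gamma_n(u_n)}$, not $e^{-\Gamma_n(u_n)}$. Testing with $G_k(u_n)$ does not control the gradient term unless $g$ is truncated above level $M$, as is done in the Appendix.
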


To prove this theorem, we introduce a novel approach designed to handle singular nonlinearities with uncontrolled growth at the origin. A key element of our analysis is the development of a bootstrap argument, with the use of both~\eqref{eq:hyp_f_pos} and~\eqref{eq:hyp_sing}, which ensures the local integrability of $|\Gamma(u)|$. Our strategy consists of approximating~\eqref{eq:PbMain} by removing the singularities and then passing to the limit. Full details are provided in Section~\ref{sec:Exist}.

Beyond existence, we investigate how the integrability of $g$ at the origin determines the behaviour of the solutions on the boundary. Given that the Dirichlet boundary condition~\eqref{eq:def_bord} is satisfied in a very weak sense, solutions with non-zero boundary trace may arise. Indeed, the next general result shows that the lack of integrability of $g$ at zero prevents the solutions from having a zero trace.

\begin{theorem}
\label{th:Trace}
Let $0<f\in L^1(\Omega)$, and assume that $g$ does not satisfy~\eqref{eq:hyp_g_int} and $h$ is positive.  Then there are no solutions $u\in\bvo$ to~\eqref{eq:PbMain} such that $u\lvert_{\partial\Omega}=0$.  \end{theorem}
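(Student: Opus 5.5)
Argue by contradiction: suppose $u\in\bvo$ is a solution with $u|_{\partial\Omega}=0$. The idea is to test \eqref{eq:def_dist} with a bounded function built from $\Gamma(u)$ that sees the whole of $g(u)|Du|$ near $\{u=0\}$. Pairing this test against $z$ gives at most $|D\Gamma(u)|$, while the right-hand side gives exactly that much plus a strictly positive contribution from $h(u)f$. The non-integrability of $g$ at zero enters because $\Gamma(u)=-\infty$ on the boundary, where $u$ has zero trace.

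\textbf{Test function.} For $k>0$ and $\epsilon\in(0,1)$, put $\Psi_k(s):=\big(k+\Gamma(s)\big)^+$ for $s\ge 0$. Since $\Gamma(0)=-\infty$, $\Psi_k$ vanishes on $[0,\sigma_k]$ with $\Gamma(\sigma_k)=-k$. It is Lipschitz on bounded sets with derivative $g$ on $\{s>\sigma_k\}$. We test with the bounded function $\phi=T_1\Psi_k(u)$, or more simply with a truncation of $(k+\Gamma(u))^+/k$ capped at one.

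\textbf{Key steps.}
\begin{enumerate}
\item $\phi$ is a bounded $BV$ function with zero trace, since $u=0$ on $\partial\Omega$ and $\phi=0$ where $u<\sigma_k$. Hence the boundary term in \eqref{eq:weak_form} vanishes.
\item By Lemma~\ref{lem:Composition} and Remark~\ref{rem:Composition_nondecr}, together with Lemma~\ref{lem:ChainRule} (valid since $D^ju=0$), we get $(z,D\phi)=|D\phi|$.
\item With the choice $\phi_k:=\min\{(k+\Gamma(u))^+/k,1\}$, we have $|D\phi_k|=\frac1k g(u)|Du|\chi_{\{\sigma_k<u<\Gamma^{-1}(0)\}}$. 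Then \eqref{eq:weak_form} reads
\[
\frac1k\int_{\{\sigma_k<u<1\}} g(u)|Du| \;=\; \into g(u)\phi_k|Du|+\into h(u)\phi_k f .
\]
\item The left side is bounded by $\frac1k\int_{\{0<u<1\}} g(u)|Du|$, which is finite by Remark~\ref{rem:g|Du|}, so it tends to $0$ as $k\to\infty$.
\item On the right, $\phi_k\to \chi_{\{u>0\}}$ monotonically, because $\Gamma(u)>-\infty$ exactly where $u>0$. Monotone convergence then gives $\int_{\{u>0\}} h(u)f\le 0$.
\item Since $h>0$ and $f>0$, this forces $|\{u>0\}|=0$, i.e.\ $u\equiv0$.
\end{enumerate}

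\textbf{The trivial case.} It remains to exclude $u\equiv 0$. The condition $h(u)f\in L^1$ already excludes it if $h(0)=\infty$. If $h(0)<\infty$, then $u\equiv0$ gives $-\operatorname{div}z=h(0)f>0$ with $Du=0$. This is possibly a legitimate solution, so I expect the statement intends $u\not\equiv0$, or else that $\Gamma(u)\in BV_{\rm loc}$ fails since $\Gamma(0)=-\infty$. Indeed, $\Gamma(u)\equiv-\infty$ is not in $L^1_{\rm loc}$, so $u\equiv 0$ violates Definition~\ref{defsol} and the contradiction is complete. More generally, $\Gamma(u)\in BV_{\rm loc}$ forces $u>0$ a.e., so in step 5 we in fact have $\{u>0\}=\Omega$ up to a null set.

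\textbf{Main obstacle.} The delicate point is step 2 together with the zero-trace claim for $\phi_k$. We need $\phi_k\in BV(\Omega)$ globally, not just locally, and we must know its trace vanishes. Since $\phi_k$ is a Lipschitz function of $u$, namely a Lipschitz function of $u$ on $[\sigma_k,\infty)$ and constant below, we get $\phi_k=\Phi(u)$ with $\Phi$ Lipschitz and $\Phi(0)=0$. The chain rule on the trace then gives $\phi_k|_{\partial\Omega}=\Phi(0)=0$. The pairing identity follows from Remark~\ref{rem:Composition_nondecr} applied to the non-decreasing locally absolutely continuous $\Phi$.
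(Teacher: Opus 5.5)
Your argument is correct, but it reaches the conclusion by a different mechanism than the paper.

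\textbf{The paper's route.} The paper argues directly rather than by contradiction. It tests with $e^{\tilde\Gamma_n(u)}$, where $\tilde g\le g$ is cut off at infinity (to keep the test function bounded, since $u$ is only in $BV(\Omega)$) and truncated by $T_n$ (so that $\tilde\Gamma_n(0)$ is finite). The exponential gives $|De^{\tilde\Gamma(u)}|=\tilde g(u)e^{\tilde\Gamma(u)}|Du|\le g(u)e^{\tilde\Gamma(u)}|Du|$, so the pairing term is absorbed pointwise by the gradient reaction term. The boundary term is handled through \eqref{eq:def_bord} together with $e^{\tilde\Gamma(0)}=0$. This yields $\int_{\partial\Omega}e^{\tilde\Gamma(u)}\,\mathrm{d}\mathcal{H}^{N-1}\ge\int_\Omega h(u)e^{\tilde\Gamma(u)}f>0$.

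\textbf{Your route.} Your $\Phi_k$ does not absorb the gradient term pointwise. Instead its derivative is only $g/k$, so the left-hand side is $O(1/k)$ because $\int_\Omega g(u)|Du|\le|\operatorname{div}z|(\Omega)<\infty$. The non-integrability of $g$ is used exactly to build a Lipschitz $\Phi_k$ with $\Phi_k(0)=0$, $\Phi_k'\le g/k$ and $\Phi_k\uparrow1$ on $(0,\infty)$. This is more elementary: $\Phi_k$ is already bounded, Lipschitz and identically zero near the origin, so neither $\tilde g$ nor the second approximation $T_n$ is needed.

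\textbf{A direct version.} You also do not need to argue by contradiction. Without assuming zero trace, \eqref{eq:def_bord} and $\Phi_k(0)=0$ give $\phi_k[z,\nu]=-\phi_k$ on $\partial\Omega$. The same identity then yields, as $k\to\infty$,
\[
\mathcal{H}^{N-1}(\{x\in\partial\Omega:u(x)>0\})\ge\int_\Omega h(u)f>0,
\]
a quantitative statement comparable to the paper's.

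\textbf{The case $u\equiv 0$.} Your hesitation here is unnecessary. As the paper observes at the start of its proof, $\Gamma(u)\in BV_{\rm loc}(\Omega)$ with $\Gamma(0)=-\infty$ already forces $u>0$ a.e., so your Step~6 is an immediate contradiction.
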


The non-integrability of $g$ is an essential requirement for Theorem~\ref{th:Trace}. Indeed, in Section~\ref{sec:Sharpness} we show that problem~\eqref{eq:PbMain} can admit solutions with zero boundary trace provided that $g$ satisfies the integrability condition~\eqref{eq:hyp_g_int}. The proof of Theorem~\ref{th:Trace} is detailed in Section~\ref{sec:Trace}. 

We also note that Theorem \ref{th:Trace} is  in agreement with the phenomenon highlighted in~\cite[Section~6]{GOP}. In~\cite{GOP}, it is shown that there are no solutions with zero trace when $g(s)=\lambda s^{-1}$ with $\lambda\ge 1$, a function that does not satisfy the integrability condition~\eqref{eq:hyp_g_int}. Theorem~\ref{th:Trace} above shows that this phenomenon actually occurs whenever $g$ fails to satisfy~\eqref{eq:hyp_g_int}.

Finally, we point out that all the results stated in this section remain valid provided $g>0$ almost everywhere in $[0,+\infty)$.

\section{Existence of bounded solutions}
\label{sec:Exist}

This section is devoted to the proof of Theorem~\ref{th:Exist}. First, we introduce the approximate problems and obtain uniform estimates for the sequence of solutions. In particular, in the framework of Theorem~\ref{th:Exist}\,\textit{ii)}, we employ a bootstrap argument to ensure that $\Gamma(u)\in L^1_{\rm loc}(\Omega)$. Subsequently, by means of a suitable (and new) technique that is designed to avoid singularities, we pass to the limit to obtain a solution to~\eqref{eq:PbMain}.

From now on, we assume that $0<f\in L^{N,\infty}(\Omega)$, that $h$ satisfies~\eqref{eq:hyp_h_inf} and that $\|f\|_{L^{N,\infty}(\Omega)} < \left(\tilde{\mathcal{S}}_1 h(\infty)\right)^{-1}$. We define, for every $s \in \mathbb{R}$ and $n\in\N$, the functions
\[
g_n(s)= g\left(|s| + \tfrac{1}{n}\right) \quad \text{and} \quad h_n(s) = h\left(|s| + \tfrac{1}{n}\right),
\]
and we set $\Gamma_n(s):= \int_1^s g_n(t)\, \mathrm{d}t$. We consider the approximate problems
\begin{equation}
	\label{eq:PbApprox}
	\begin{cases}
		-\Delta_1 u_n = g_n(u_n)|Du_n| + h_n(u_n)f & \text{in }\Omega,\\
		u_n=0 & \text{on } \partial\Omega.
	\end{cases}
\end{equation}
Since $\limsup_{s\to +\infty} h_n(s) = h(\infty) <+ \infty$ by~\eqref{eq:hyp_h_inf} and $\|f\|_{L^{N,\infty}(\Omega)} < \left(\tilde{\mathcal{S}}_1 h(\infty)\right)^{-1}$, the existence of a non-negative solution $u_n\in \bvo \cap \lio$ to~\eqref{eq:PbApprox} with $D^j u_n = 0$ is guaranteed by Theorem~\ref{th:Appendix}. Furthermore, Theorem~\ref{th:Appendix} ensures that, if $M_n>0$ is any constant such that $\mathcal{\tilde S}_1^{-1} - \|f\|_{L^{N,\infty}(\Omega)} \sup_{s\geq M_n} h_n(s)>0$, then 
\[
\|u_n\|_\lio \leq M_n.
\]
In particular, taking $M>0$ such that $\mathcal{\tilde S}_1^{-1} - \|f\|_{L^{N,\infty}(\Omega)} \sup_{s\geq M} h(s)>0$ and noting that $\sup_{s\geq M} h_n(s) \leq \sup_{s\geq M} h(s)$, it follows that
\begin{equation}
\label{eq:lio_bound}
\|u_n\|_\lio \leq M,\ \forall n\in\N.
\end{equation}
This shows that the sequence $u_n$ is bounded in $\lio$. In what follows, we denote by $z_n\in \DM(\Omega)$ the vector field associated to $u_n$.

Taking $\phi \in \bvo\cap\lio$ as test function in the weak formulation of~\eqref{eq:PbApprox}, i.e., in \eqref{eq:app_weak_formA} one has 
\begin{equation}
\label{eq:app_weak_form}
\into (z_n, D\phi) - \intdo \phi [z_n,\nu] \ \dH = \into g_n(u_n) \phi |Du_n| + \into h_n(u_n) \phi f, \ \forall \phi \in \bvo\cap\lio.
\end{equation}
We note that, since $u_n$ has no jump part, $g_n(u_n)|Du_n| = |D\Gamma_n(u_n)|$.

In the following, we prove several uniform estimates for problem~\eqref{eq:PbApprox}. We first show that the solutions $u_n$ are uniformly bounded in $\bvo$.

\begin{lemma}  
\label{lem:bound_u_n}
  Let $0<f\in L^{N,\infty}(\Omega)$. Assume that $h$ satisfies~\eqref{eq:hyp_h_inf} and $\|f\|_{L^{N,\infty}(\Omega)} < \left(\tilde{\mathcal{S}}_1 h(\infty)\right)^{-1}$. Let $u_n$ be a solution to \eqref{eq:PbApprox}.  Then
\begin{equation*}
\|u_n\|_\bvo \leq C,\ \forall n\in\N,
\end{equation*}
for some positive constant $C$ independent of $n$.
\end{lemma}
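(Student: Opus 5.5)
The plan is to test the weak formulation~\eqref{eq:app_weak_form} with the constant function $\phi\equiv 1$, and then to convert the resulting bound on the reaction gradient term into a bound on $|Du_n|$ through a uniform lower bound on $g_n$ over the range of $u_n$. The boundary part of the $BV$ norm will follow directly from the $L^\infty$ estimate~\eqref{eq:lio_bound}.

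\emph{Step 1: testing with $\phi\equiv 1$.} The constant $\phi\equiv1$ belongs to $\bvo\cap\lio$ and satisfies $D\phi=0$, so $(z_n,D\phi)=0$. Plugging it into~\eqref{eq:app_weak_form} gives
\[
-\intdo [z_n,\nu]\ \dH=\into g_n(u_n)|Du_n|+\into h_n(u_n)f .
\]
Since $\|[z_n,\nu]\|_{L^\infty(\partial\Omega)}\le\|z_n\|_{L^\infty(\Omega)^N}\le 1$, the left-hand side is at most $\mathcal H^{N-1}(\partial\Omega)$. Both terms on the right are non-negative, so
\[
\into g_n(u_n)|Du_n|\le \mathcal H^{N-1}(\partial\Omega)\qquad\text{for every } n.
\]
The key point is that the gradient term appears with a favourable sign here, so no absorption argument is needed.

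\emph{Step 2: uniform lower bound for $g_n$.} By~\eqref{eq:lio_bound} we have $0\le u_n\le M$, so $g_n(u_n)=g(u_n+\tfrac1n)$ only takes values of $g$ on $[0,M+1]$. The function $g\colon[0,M+1]\to(0,+\infty]$ is continuous, finite and positive on $(0,M+1]$, and $g(0)\in(0,+\infty]$. Hence
\[
c:=\inf_{[0,M+1]} g>0.
\]
If $g(0)=+\infty$, $g$ is large near $0$, and on the remaining compact interval $g$ is continuous and positive, so the infimum is still attained and positive. This gives $g_n(u_n)\ge c$ $\ae$, and also $|Du_n|$-a.e., since $u_n$ has no jump part and the precise representative of $u_n$ lies in $[0,M]$. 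We conclude
\[
\into|Du_n|\le \frac{\mathcal H^{N-1}(\partial\Omega)}{c}.
\]

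\emph{Step 3: boundary term.} The trace of $u_n$ lies in $[0,M]$, so $\intdo u_n\,\dH\le M\,\mathcal H^{N-1}(\partial\Omega)$. Adding this to Step 2 yields $\|u_n\|_{\bvo}\le C$ with $C=\mathcal H^{N-1}(\partial\Omega)(c^{-1}+M)$, which is independent of $n$.

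The only delicate point is Step 2: positivity of $g$ (or $g>0$ a.e., as the paper's final remark allows) must give a positive lower bound on $[0,M+1]$ that does not depend on $n$. If only $g>0$ a.e. were assumed, I would instead bound $\into|Du_n|$ via the coarea formula applied to $\Gamma_n(u_n)$. Using the identity $g_n(u_n)|Du_n|=|D\Gamma_n(u_n)|$, I would then compare level sets, exploiting that $\Gamma_n$ is increasing with derivative bounded below on average.
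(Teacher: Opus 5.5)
Your proof is correct under the standing hypothesis that $g$ takes values in $(0,+\infty]$, but it follows a different route from the paper's.

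The paper tests~\eqref{eq:app_weak_form} with the non-positive function $\phi=u_n-\|u_n\|_{\lio}$. The entire right-hand side, gradient term included, then has a sign and can be dropped. Using $(z_n,Du_n)=|Du_n|$ and $u_n[z_n,\nu]=-u_n$ on $\partial\Omega$, one gets at once
\[
\into|Du_n|+\intdo u_n\,\dH\le-\|u_n\|_{\lio}\intdo[z_n,\nu]\,\dH\le M\,\mathcal H^{N-1}(\partial\Omega).
\]

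Your Step~1 is precisely the paper's subsequent Lemma~\ref{lem:bound_Gamma_1}. It needs neither the pairing identity nor the boundary condition. However, it only controls the weighted measure $g_n(u_n)|Du_n|$, so you have to convert it using $c=\inf_{[0,M+1]}g>0$. That works here, but your constant deteriorates as $c\to0$. The paper's bound $M\,\mathcal H^{N-1}(\partial\Omega)$ does not depend on $g$ at all and holds for any $g\ge0$. This includes the case $g>0$ only a.e.\ mentioned at the end of Section~\ref{sec:Main}.

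Your fallback for that case would not work as sketched. By the coarea formula, $\into|D\Gamma_n(u_n)|=\int_0^M g_n(t)\,|D\chi_{\{u_n>t\}}|(\Omega)\,\mathrm{d}t$. This identity alone says nothing about the perimeters of the level sets $\{u_n>t\}$ for $t$ near a zero of $g_n$. The sign trick with $u_n-\|u_n\|_{\lio}$ is exactly what removes any dependence on a lower bound for $g$.
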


\begin{proof}
We take $\phi=\left(u_n - \|u_n\|_\lio \right) \in \bvo \cap \lio$  in~\eqref{eq:app_weak_form} to obtain, as the right-hand side is non-positive, that
\[
\into \left(z_n, D \left(u_n - \|u_n\|_\lio \right)\right) - \intdo \left(u_n - \|u_n\|_\lio \right) [z_n, \nu]\ \dH \leq 0.
\]
Recalling that $(z_n, Du_n) = |Du_n|$ as measures in $\Omega$ and $u_n [z_n, \nu] = -u_n$ $\mathcal{H}^{N-1}$-a.e. on $\partial\Omega$, we deduce
\[
\into |Du_n| + \intdo u_n \ \dH \leq - \intdo \|u_n\|_\lio [z_n, \nu]\ \dH .
\]
Finally, using the boundedness of $u_n$ in $\lio$ (see~\eqref{eq:lio_bound}) and that $\|[z_n,\nu]\|_{L^\infty(\partial\Omega)}\leq 1$, we conclude that
\begin{equation*}
	\|u_n\|_{BV(\Omega)} \leq -\int_{\partial\Omega} \|u_n\|_{L^\infty(\Omega)} [z_n,\nu] \ \dH \leq \|u_n\|_{L^\infty(\Omega)} \mathcal{H}^{N-1}(\partial\Omega) \leq M \mathcal{H}^{N-1}(\partial\Omega),\ \forall n\in \N,
\end{equation*}
as desired.
\end{proof}

Next, we take advantage of the rigidity of the 1-Laplacian operator to obtain that the right-hand side terms are uniformly bounded as measures.

\begin{lemma}  
\label{lem:bound_Gamma_1}
 Let $0<f\in L^{N,\infty}(\Omega)$. Assume that $h$ satisfies~\eqref{eq:hyp_h_inf} and $\|f\|_{L^{N,\infty}(\Omega)} < \left(\tilde{\mathcal{S}}_1 h(\infty)\right)^{-1}$. Let $u_n$ be a solution to \eqref{eq:PbApprox}. 
Then
\begin{equation*}
\into |D\Gamma_n(u_n)| \leq C \quad \text{and} \quad \into h_n(u_n)f \leq C,\quad \forall n\in\N,
\end{equation*}
for some positive constant $C$ independent of $n$.
\end{lemma}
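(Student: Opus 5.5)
The plan is to test the weak formulation \eqref{eq:app_weak_form} with the constant function $\phi\equiv 1\in\bvo\cap\lio$. Since $D\phi=0$, the pairing $(z_n,D\phi)$ vanishes, and we are left with
\[
-\intdo [z_n,\nu]\,\dH=\into g_n(u_n)|Du_n|+\into h_n(u_n)f .
\]
The left-hand side is bounded by $\mathcal H^{N-1}(\partial\Omega)$, because $\|[z_n,\nu]\|_{L^\infty(\partial\Omega)}\le\|z_n\|_{L^\infty(\Omega)^N}\le1$. Both terms on the right are non-negative, since $g_n>0$, $h_n\ge0$ and $f>0$. Each of them is therefore bounded by $\mathcal H^{N-1}(\partial\Omega)$, a constant that does not depend on $n$. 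This is the ``rigidity'' of the 1-Laplacian: the divergence of a field with $|z_n|\le 1$ has total mass controlled only by the boundary.

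It remains to identify $\into g_n(u_n)|Du_n|$ with $\into|D\Gamma_n(u_n)|$. The function $g_n$ is continuous and bounded on $[0,M]$, and $u_n$ takes values there by \eqref{eq:lio_bound}. Hence $\Gamma_n$ is Lipschitz on the range of $u_n$. Since $D^ju_n=0$, the chain rule in Lemma~\ref{lem:ChainRule_AFP} gives $D\Gamma_n(u_n)=g_n(u_n)\widetilde Du_n=g_n(u_n)Du_n$. Because $g_n>0$, this yields $|D\Gamma_n(u_n)|=g_n(u_n)|Du_n|$, as already noted before the lemma. Combining this with the previous paragraph gives both bounds with $C=\mathcal H^{N-1}(\partial\Omega)$.

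The only point requiring care is the justification that $\phi\equiv1$ is admissible and that the identity in \eqref{eq:app_weak_form} holds with finite terms. This follows from the weak formulation of the approximate problem, which holds for every $\phi\in\bvo\cap\lio$, together with the fact that $\operatorname{div}z_n\le0$ is a finite measure (Lemma~\ref{lem:Global_z}). I do not expect any genuine obstacle here.
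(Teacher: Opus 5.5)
Your proposal is correct and follows the paper's proof: you test \eqref{eq:app_weak_form} with $\phi\equiv 1$ and use $\|[z_n,\nu]\|_{L^\infty(\partial\Omega)}\le 1$, which gives both bounds with $C=\mathcal H^{N-1}(\partial\Omega)$. The identity $g_n(u_n)|Du_n|=|D\Gamma_n(u_n)|$ is exactly the one the paper records just before the lemma, where it follows from $D^j u_n=0$ and the chain rule.
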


\begin{proof}
We take $\phi=1$ as test function in~\eqref{eq:app_weak_form}, obtaining that
\[
- \intdo [z_n,\nu] \ \dH = \into g_n(u_n) |Du_n| + \into h_n(u_n)f,\ \forall n\in\N.
\]
Since $\|[z_n,\nu]\|_{L^\infty(\partial\Omega)}\leq 1$, our claim readily follows.
\end{proof}

Our next objective is to show that the sequence $\Gamma_n(u_n)$ is bounded in $BV_{\rm loc}(\Omega)$. Given the above result, it suffices to establish that $\Gamma_n(u_n)$ is bounded in $L^1_{\rm loc}(\Omega)$. When $g$ satisfies the integrability condition~\eqref{eq:hyp_g_int}, this property follows immediately since, in fact, $\Gamma_n(u_n)$ remains bounded in $\lio$. However, if~\eqref{eq:hyp_g_int} is not satisfied, this approach fails as $\Gamma(0)=-\infty$. We shall demonstrate that this difficulty can be overcome provided that $f$ satisfies the local positivity~\eqref{eq:hyp_f_pos} and the interplay~\eqref{eq:hyp_sing} between $g$ and $h$ holds. The next results are directed toward this end.

First, we construct an auxiliary function.

\begin{lemma}
\label{lem:h_aux}
Assume that~\eqref{eq:hyp_sing} holds. Then there exists a non-increasing, locally Lipschitz function $\tilde h\colon (0,+\infty) \to [0,+\infty)$ such that $\tilde h(s) \leq h(s)$ for any $s>0$ and, for some $\eta>0$, verifies
\begin{equation}
\label{eq:in_aux}
    \limsup_{s\to 0^+} \frac{|\Gamma(s)|}{\tilde h(s)^\eta} <+  \infty.
\end{equation}
\end{lemma}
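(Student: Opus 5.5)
Under \eqref{eq:hyp_sing} there are $\theta>1$ and $\gamma,s_0>0$ with $g(s)\le s^{-\theta}$ and $h(s)\ge s^{-\gamma}$ on $(0,s_0)$. The idea is to take as $\tilde h$ a truncated power: $\tilde h(s)=s^{-\gamma}$ near the origin, continued as a non-increasing, non-negative, locally Lipschitz function lying below $h$.

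\emph{Bound on $\Gamma$ near the origin.} For $s\in(0,s_1)$ with $s_1:=\min\{s_0,1\}$, since $g>0$,
\[
|\Gamma(s)| = -\Gamma(s)=\int_s^1 g(t)\,\mathrm{d}t \le \int_s^{s_1} t^{-\theta}\,\mathrm{d}t + \int_{s_1}^1 g(t)\,\mathrm{d}t \le \frac{s^{1-\theta}}{\theta-1} + C_1 .
\]
Here $C_1<\infty$ because $g$ is continuous and finite on $[s_1,1]$. Hence $|\Gamma(s)|\le C s^{1-\theta}$ for $s$ small.

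\emph{Construction of $\tilde h$.} Fix $s_2\in(0,s_1)$ and define
\[
\tilde h(s)= s^{-\gamma}\ \text{for } 0<s\le s_2,\qquad \tilde h(s)=s_2^{-\gamma}\Big(1-\frac{s-s_2}{s_2}\Big)^+\ \text{for } s> s_2 .
\]
This function has the required properties:
\begin{itemize}
\item It is non-increasing and non-negative.
\item It is locally Lipschitz on $(0,+\infty)$: the power is smooth away from $0$, and the linear piece matches it at $s_2$ by continuity.
\item It satisfies $\tilde h\le h$ everywhere. On $(0,s_2]$ this holds by \eqref{eq:hyp_sing}. On $(s_2,2s_2)$ we have $\tilde h(s)\le s_2^{-\gamma}$, so it suffices to have $h\ge s_2^{-\gamma}$ there. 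For $s<s_0$ this follows from $h(s)\ge s^{-\gamma}$. Choosing $s_2<s_0/2$ guarantees $(s_2,2s_2)\subset(0,s_0)$, and then $h(s)\ge s^{-\gamma}> (2s_2)^{-\gamma}$.
\end{itemize}
The last estimate only gives $(2s_2)^{-\gamma}$, not $s_2^{-\gamma}$, so the continuation must be adjusted. Define it instead as $\tilde h(s)=\max\{s^{-\gamma}-s_2^{-\gamma},0\}$, which equals $0$ for $s\ge s_2$. Then $\tilde h\le s^{-\gamma}\le h$ on $(0,s_0)$, and $\tilde h=0\le h$ elsewhere. This version is non-increasing and locally Lipschitz on $(0,+\infty)$, being the maximum of two locally Lipschitz functions.

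\emph{Verification of \eqref{eq:in_aux}.} Take $\eta=(\theta-1)/\gamma>0$. As $s\to0^+$ we have $\tilde h(s)\sim s^{-\gamma}$, hence $\tilde h(s)^\eta\sim s^{1-\theta}$. Combined with the bound on $\Gamma$,
\[
\limsup_{s\to0^+}\frac{|\Gamma(s)|}{\tilde h(s)^\eta}\le \limsup_{s\to0^+}\frac{C s^{1-\theta}}{\tfrac12 s^{1-\theta}}<\infty .
\]
The only delicate point is making the continuation stay below $h$ while remaining non-increasing and Lipschitz. The choice $\max\{s^{-\gamma}-s_2^{-\gamma},0\}$ handles all three requirements at once. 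Positivity of $\tilde h$ is not required by the statement.
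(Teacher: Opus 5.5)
Your final construction is correct, and it follows a different, more elementary route than the paper. Take $s_2<\min\{s_0,1\}$. Then $\tilde h(s)=\max\{s^{-\gamma}-s_2^{-\gamma},0\}$ is non-negative, non-increasing and locally Lipschitz on $(0,+\infty)$. It lies below $s^{-\gamma}\le h(s)$ on $(0,s_2)$, and it vanishes, hence lies below $h\ge 0$, on $[s_2,+\infty)$. With $\eta=(\theta-1)/\gamma$ one has $\tilde h(s)^\eta/s^{1-\theta}\to 1$, so your bound $|\Gamma(s)|\le C s^{1-\theta}$ gives \eqref{eq:in_aux}.

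The paper argues differently. It first reduces to $h>0$ and builds a smooth $h_1$ with $\frac14 h\le h_1\le h$, using a partition of unity and mollifiers. It then takes the lower monotone envelope $\tilde h(s)=\inf_{t\le s}h_1(t)$ and checks local Lipschitz continuity by hand. Finally it uses $\tilde h(s)\ge \frac14 s^{-\gamma}$ near $0$ and chooses $\eta>(\theta-1)/\gamma$, obtaining a $\limsup$ equal to $0$.

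What the paper's route buys is a $\tilde h$ that stays positive and comparable to $h$ on all of $(0,+\infty)$. That extra information is not needed later: Lemmas~\ref{lem:h_L1_loc} and~\ref{lem:bound_Gamma_2} only use $\tilde h\le h$, monotonicity, local Lipschitz continuity, and the behaviour of $\tilde h$ near the origin. Your truncated power supplies all of these directly from the explicit minorant already contained in \eqref{eq:hyp_sing}. It also avoids both the mollification machinery and the reduction to $h>0$.

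In a written version, delete the abandoned linear continuation and the condition $s_2<s_0/2$ that was tied to it.
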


\begin{proof}
Observe that, thanks to~\eqref{eq:hyp_sing}, without loss of generality we may assume that $h>0$, as the argument away from zero is straightforward. The construction of $\tilde{h}$ then proceeds in two steps.
 
First, we define a smooth approximation of $\frac{1}{2}h$, denoted by $h_1$, which satisfies $\frac{1}{4}h\leq  h_1 \leq h$. This function can be obtained through a standard partition of unity argument. Specifically, since the family of intervals $\{(2^{n-1}, 2^{n+1})\}_{n\in\mathbb{Z}}$ constitutes a locally finite open cover of $(0,+\infty)$, a smooth partition of unity subordinated to this cover can be selected. By employing mollifiers, we construct a local smooth approximation of $\frac{1}{2}h$ that remains bounded between $\frac{1}{4}h$ and $h$ within each interval of the cover. Finally, these local approximations are glued together via the partition of unity to yield the desired  smooth  function $h_1$.

In the second phase, we define $\tilde h\colon (0,+\infty) \to [0,+\infty)$ as the lower envelope of $h_1$, that is, we set $\tilde h(s) = \inf_{t\leq s} h_1(t)$ for each $s>0$. By construction, $\tilde h$ is non-increasing and satisfies $\tilde h \leq h_1 \leq h$. Furthermore, $\tilde h$ is locally Lipschitz. To see this, fix an interval $I=[a_1,a_2]\subset (0,+\infty)$. Since $\lim_{s\to 0^+}h_1(s)=+\infty$ by~\eqref{eq:hyp_sing}, there exists $a_0\in (0,a_1]$ such that $\tilde h(s) = \min_{a_0 \leq t\leq s} h_1(t)$ for all $s\in I$. Given that $h_1$ is Lipschitz in $I$, it follows that $\tilde h$ inherits the same Lipschitz regularity in $I$. 

Finally, we verify that~\eqref{eq:in_aux} holds. On the one hand, since $h_1\geq \frac{1}{4}h$ and~\eqref{eq:hyp_sing} is satisfied, it follows that
\[
\tilde h(s) = \inf\nolimits_{t\leq s} h_1(t) \geq \frac{1}{4} s^{-\gamma},\ \forall s\leq s_0.
\]
On the other hand, by ~\eqref{eq:hyp_sing}, we have the estimate
\[
|\Gamma(s)|=\int_s^1 g(t) \, \mathrm{d}t \leq \int_s^1 t^{-\theta} \, \mathrm{d}t = \frac{1}{\theta-1} \left(s^{1-\theta}-1 \right),\ \forall s\leq \min\{1,s_0\}.
\]
Consequently, by fixing $\eta >\gamma^{-1} (\theta -1)$, we conclude that
\[
\limsup_{s\to 0^+} \frac{|\Gamma(s)|}{\tilde h(s)^\eta} \leq \frac{4^\eta}{\theta-1} \limsup_{s\to 0^+} s^{\gamma \eta}\left(s^{1-\theta} -1\right) = 0.
\]
This completes the proof.
\end{proof}

As before, we set $\tilde h_n(s) = \tilde h\left(s + \frac{1}{n}\right)$ for any $s\geq 0$. In the following, we obtain a bound for $\tilde h_n(u_n)$ in $L^m_{\rm loc} (\Omega)$ for any $m\geq 1$.

\begin{lemma}
\label{lem:h_L1_loc}
 Let $0<f\in L^{N,\infty}(\Omega)$. Assume that $h$ satisfies~\eqref{eq:hyp_h_inf} and $\|f\|_{L^{N,\infty}(\Omega)} < \left(\tilde{\mathcal{S}}_1 h(\infty)\right)^{-1}$. Moreover, suppose that $f$ verifies~\eqref{eq:hyp_f_pos} and that~\eqref{eq:hyp_sing} holds.  Let $u_n$ be a solution to \eqref{eq:PbApprox} and let $\tilde h$ be the function given by Lemma~\ref{lem:h_aux}. Then for any $m\in\N$ and any $\omega\subset\subset \Omega$ there exists some $C=C(m,\omega)>0$ such that
\begin{equation}
\label{eq:Pf_Lem_h_L1_b}
\int_\omega \tilde h_n(u_n)^m < C,\ \forall n\in \N.
\end{equation}
\end{lemma}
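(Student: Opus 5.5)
The plan is a bootstrap on $m$. At each step we test the approximate problem~\eqref{eq:PbApprox} with a compactly supported cut-off times a non-increasing function of $u_n$. The 1-Laplacian structure then makes the gradient contribution of the test function have a favourable sign, leaving only the term containing $\nabla\rho$.

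\emph{Choice of test function.} Fix $\omega\subset\subset\Omega$ and $m\in\N$. Choose open sets $\omega=\omega_m\subset\subset\omega_{m-1}\subset\subset\cdots\subset\subset\omega_0\subset\subset\Omega$. For each $k=1,\dots,m$ take a cut-off $0\le\rho_k\in C_c^\infty(\omega_{k-1})$ with $\rho_k\equiv1$ on $\omega_k$. For fixed $n$ set $\psi(s)=\tilde h_n(s)^{k-1}$ on $[0,+\infty)$.

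Since $\tilde h$ is non-increasing and locally Lipschitz on $(0,+\infty)$, $\psi$ is non-increasing and Lipschitz on $[0,M]$, and it is bounded by $\tilde h(1/n)^{k-1}$. Together with~\eqref{eq:lio_bound}, this gives $\psi(u_n)\in\bvo\cap\lio$ (Remark~\ref{rem:Composition_suff}). Hence $\phi=\psi(u_n)\rho_k\in\bvo\cap\lio$ is admissible in~\eqref{eq:app_weak_form}, and its boundary term vanishes because $\rho_k$ has compact support.

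\emph{The key sign.} By the Leibniz rule~\eqref{eq:Prod},
\[
(z_n,D\phi)=\psi(u_n)\,z_n\cdot\nabla\rho_k+\rho_k\,(z_n,D\psi(u_n)).
\]
Since $(z_n,Du_n)=|Du_n|$ and $\psi$ is continuous, non-increasing and locally absolutely continuous, Remark~\ref{rem:Composition_nonincr} yields $(z_n,D\psi(u_n))=-|D\psi(u_n)|\le0$. Using $\|z_n\|_\infty\le1$, the left-hand side of~\eqref{eq:app_weak_form} is therefore at most $\int_\Omega\tilde h_n(u_n)^{k-1}|\nabla\rho_k|$.

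On the right-hand side we drop the non-negative gradient term. We then use $h_n(s)=h(s+\tfrac1n)\ge\tilde h(s+\tfrac1n)=\tilde h_n(s)$ and $f\ge c_{\omega_0}$ on $\omega_0\supset\operatorname{supp}\rho_k$, from~\eqref{eq:hyp_f_pos}. This gives
\[
c_{\omega_0}\int_{\omega_k}\tilde h_n(u_n)^k\le c_{\omega_0}\int_\Omega\rho_k\,\tilde h_n(u_n)^k\le\|\nabla\rho_k\|_\infty\int_{\omega_{k-1}}\tilde h_n(u_n)^{k-1}.
\]

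\emph{Induction.} For $k=1$ the right-hand side is at most $\|\nabla\rho_1\|_\infty|\Omega|$, which is independent of $n$. Iterating from $k=1$ up to $k=m$ produces a constant $C(m,\omega)$, depending only on $c_{\omega_0}$, the cut-offs and $|\Omega|$, such that~\eqref{eq:Pf_Lem_h_L1_b} holds uniformly in $n$.

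The main obstacle is justifying the sign $(z_n,D\psi(u_n))=-|D\psi(u_n)|$ together with the product rule for $\psi(u_n)\rho_k$. This is where the local Lipschitz regularity and monotonicity of $\tilde h$ from Lemma~\ref{lem:h_aux} are essential, and where the boundedness $\tilde h_n\le\tilde h(1/n)$ at fixed $n$ is needed to make $\psi(u_n)$ bounded and of bounded variation. Everything else is routine bookkeeping with the nested sets.
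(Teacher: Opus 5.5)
Your proof is correct and matches the paper's argument: the same bootstrap testing with $\tilde h_n(u_n)^{k-1}$ times a cut-off, the same use of Remark~\ref{rem:Composition_nonincr} to discard $(z_n,D\tilde h_n(u_n)^{k-1})=-|D\tilde h_n(u_n)^{k-1}|\le 0$, and the same lower bounds $h_n\ge\tilde h_n$ and $f\ge c_{\omega_0}$. The only difference is cosmetic: you build a finite nested chain of open sets ending at $\omega$ directly, whereas the paper first reduces to balls compactly contained in $\Omega$ and uses a fixed infinite chain of concentric balls.
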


\begin{proof}
We are done if we prove that $\tilde h_n(u_n)^m$ is bounded in $L^1(B)$ for any ball $B$ compactly contained in $\Omega$. Let $B_R$ be a ball of radius $R>0$ compactly contained in $\Omega$. We can find $R'>R$ such that $B_{R} \subset \subset B_{R'} \subset \subset \Omega$, where $B_{R'}$ is a ball with the same centre as $B_{R}$.

We denote $\omega:= B_R$ and $\omega_0:=B_{R'}$. For all $m\in\N$, we define $\omega_m = B_{R+2^{-m}(R'-R)}$. Observe that $\omega\subset\omega_{m}\subset \omega_{m-1}\subset \omega_0$ for all $m \in \N$. We also define, for every $m\in \N$, a smooth cut-off function $0\leq \psi_m\in C_c^1(\Omega)$ such that $\psi_m\equiv 1$ in $\omega_m$ and $\psi_m\equiv 0$ in $\Omega\setminus \omega_{m-1}$.

Taking  $\psi_1$ as test function in~\eqref{eq:app_weak_form}, dropping the gradient term and using that $\|z_n\|_{\lio^N}\leq 1$, we deduce
\[
\into h_n(u_n)f \psi_1 \leq \into z_n\cdot \nabla\psi_1 \leq |\Omega| \cdot \|\nabla\psi_1\|_{L^\infty(\Omega)^N},\ \forall n\in \N.
\]
Since $f$ satisfies~\eqref{eq:hyp_f_pos} in $\omega_0$, $\psi_1\equiv 1$ in $\omega_1$, and $\tilde h_n(u_n)\leq h_n(u_n)$, it follows that
\[
\int_{\omega_1} \tilde h_n(u_n) \leq c_{\omega_0}^{-1} |\Omega|\cdot \|\nabla\psi_1\|_{L^\infty(\Omega)^N},\ \forall n\in \N.
\]

Now, we claim that, for all $m\in \N$, it holds
\begin{equation}
\label{eq:Pf_Lem_h_L1}
\int_{\omega_m} \tilde h_n(u_n)^m \leq c_{\omega_0}^{-m} |\Omega| \prod_{i=1}^m \|\nabla\psi_i\|_{L^\infty(\Omega)^N},\ \forall n\in \N.
\end{equation}

As previously established, this is true for $m=1$. Assume that our claim holds for some $m\in\N$ and let us show that it is also true for $m+1$. Since $\tilde h$ is locally Lipschitz and $u_n$ is bounded, it follows that $\tilde h_n(u_n)^m \in BV(\Omega)\cap\lio$. Consequently, we can take $\tilde h_n(u_n)^m \psi_{m+1}$ as test function in~\eqref{eq:app_weak_form}. Dropping the gradient term, and using that $\tilde h_n(u_n) \leq h_n(u_n)$, we obtain
\[
\into \tilde h_n(u_n)^{m+1} f \psi_{m+1} \leq \into \left(z_n, D\left(\tilde h_n(u_n)^m\psi_{m+1} \right) \right) = \into z_n\cdot \nabla \psi_{m+1} \tilde h_n(u_n)^m + \into (z_n,D\tilde h_n(u_n)^m) \psi_{m+1}, \ \forall n\in \N.
\]
As $\tilde h_n$ is locally Lipschitz and non-increasing, we have that $(z_n,D\tilde h_n(u_n)^m) = - |D\tilde h_n(u_n)^m|$ by Remark~\ref{rem:Composition_nonincr}, so we can drop the last term to obtain that
\[
\into \tilde h_n(u_n)^{m+1} f \psi_{m+1} \leq  \into z_n\cdot \nabla \psi_{m+1} \tilde h_n(u_n)^m, \ \forall n\in \N.
\]
Then, using~\eqref{eq:Pf_Lem_h_L1} we deduce that
\[
\into \tilde h_n(u_n)^{m+1} f \psi_{m+1} \leq  \into z_n\cdot \nabla \psi_{m+1} \tilde h_n(u_n)^m = \int_{\omega_m} z_n \cdot \nabla \psi_{m+1} \tilde h_n(u_n)^m \leq c_{\omega_0}^{-m} |\Omega| \prod_{i=1}^{m+1} \|\nabla\psi_i\|_{L^\infty(\Omega)^N},\ \forall n\in \N.
\]
Finally, recalling that $f$ satisfies~\eqref{eq:hyp_f_pos} in $\omega_0$ and $\psi_{m+1}\equiv 1$ in $\omega_{m+1}$, we obtain
\[
\int_{\omega_{m+1}} \tilde h_n(u_n)^{m+1} \leq c_{\omega_0}^{-(m+1)} |\Omega| \prod_{i=1}^{m+1} \|\nabla\psi_i\|_{L^\infty(\Omega)^N},\ \forall n\in \N,
\]
which is the desired inequality. Therefore,~\eqref{eq:Pf_Lem_h_L1} holds true for all $m\in\N$. Since $B_R=\omega\subset \omega_{m}$ for all $m\in\N$, we conclude that~\eqref{eq:Pf_Lem_h_L1_b} is satisfied.
\end{proof}

\begin{remark}
This estimate also works if $g\equiv 0$ and is new even in this case. Obviously, if $h$ is non-increasing and locally Lipschitz the entire argument can be made with $h$ instead of $\tilde h$, obtaining that $h(u)\in L^m_{\rm loc}(\Omega)$ for any $m\geq 1$ after an application of Fatou's Lemma. This is the case, for instance, of the model function $h(s)=s^{-\gamma}$.
\end{remark}
The preceding results allow us to establish that the sequence $\Gamma_n(u_n)$ is bounded in $BV_{\rm loc}(\Omega)$ under the appropriate assumptions.

\begin{lemma}
\label{lem:bound_Gamma_2}
Let $0<f\in L^{N,\infty}(\Omega)$. Assume that $h$ satisfies~\eqref{eq:hyp_h_inf} and $\|f\|_{L^{N,\infty}(\Omega)} < \left(\tilde{\mathcal{S}}_1 h(\infty)\right)^{-1}$. Let $u_n$ be a solution to \eqref{eq:PbApprox}.
Moreover, suppose that one of the following conditions holds:
\begin{enumerate}[i)]
    \item $g$ satisfies the integrability condition~\eqref{eq:hyp_g_int}.
    \item $f$ satisfies~\eqref{eq:hyp_f_pos} and~\eqref{eq:hyp_sing} holds.
\end{enumerate}
Then, for every $\omega\subset\subset \Omega$, there exists some $C=C(\omega)>0$ such that
\begin{equation*}
\|\Gamma_n(u_n)\|_{BV(\omega)} \leq C,\ \forall n\in\N.
\end{equation*}
\end{lemma}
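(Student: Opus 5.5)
Since $u_n$ has no jump part, $g_n(u_n)|Du_n| = |D\Gamma_n(u_n)|$. Lemma~\ref{lem:bound_Gamma_1} then gives $\int_\Omega |D\Gamma_n(u_n)|\le C$ uniformly in $n$. Hence it suffices to bound $\Gamma_n(u_n)$ in $L^1(\omega)$ for each $\omega\subset\subset\Omega$. We split according to the two cases.

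\emph{Case i).} By~\eqref{eq:lio_bound}, $0\le u_n\le M$, so $\tfrac1n\le u_n+\tfrac1n\le M+1$. Writing $\Gamma_n(s)=\int_1^s g(t+\tfrac1n)\,\mathrm{d}t=\Gamma(s+\tfrac1n)-\Gamma(1+\tfrac1n)$ for $s\ge0$, we get
\[
|\Gamma_n(u_n)|\le \sup_{0< r\le M+1}|\Gamma(r)|+\sup_{1\le r\le 2}|\Gamma(r)|.
\]
This is finite because~\eqref{eq:hyp_g_int} makes $\Gamma$ bounded near $0$ and $\Gamma$ is continuous on $(0,M+1]$. So $\Gamma_n(u_n)$ is bounded in $L^\infty(\Omega)$, and the $BV(\omega)$ bound follows (even globally in $L^1$).

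\emph{Case ii).} Again $\Gamma_n(u_n)=\Gamma(u_n+\tfrac1n)-\Gamma(1+\tfrac1n)$, and the second term is bounded. For the first term we use Lemma~\ref{lem:h_aux}: there are $\delta\in(0,1)$ and $K>0$ with $|\Gamma(r)|\le K\,\tilde h(r)^\eta$ for $r\in(0,\delta)$. On $[\delta,M+1]$, $|\Gamma|$ is bounded by a constant $K'$. Hence, with $r=u_n+\tfrac1n$,
\[
|\Gamma(u_n+\tfrac1n)|\le K\,\tilde h_n(u_n)^\eta+K'.
\]
Choose an integer $m\ge\eta$. Then $\tilde h_n(u_n)^\eta\le 1+\tilde h_n(u_n)^m$, and Lemma~\ref{lem:h_L1_loc} gives $\int_\omega \tilde h_n(u_n)^m\le C(m,\omega)$. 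Therefore $\int_\omega|\Gamma_n(u_n)|\le C(\omega)$ uniformly in $n$. Combined with the gradient bound, this yields $\|\Gamma_n(u_n)\|_{BV(\omega)}\le C(\omega)$.

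The only delicate point is case ii), where $|\Gamma(0)|=+\infty$. There the $L^1_{\rm loc}$ bound rests entirely on the comparison~\eqref{eq:in_aux} and on the bootstrap estimate of Lemma~\ref{lem:h_L1_loc}. We also need to check that the shift by $\tfrac1n$ is compatible with how $\tilde h_n$ is defined. It is, since both $\Gamma_n$ and $\tilde h_n$ are evaluated at $u_n+\tfrac1n$.
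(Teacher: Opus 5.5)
Your proposal is correct and follows the paper's proof essentially step by step. In case i) you bound $\Gamma_n(u_n)$ in $L^\infty$; in case ii) you combine the comparison $|\Gamma|\le K\tilde h^\eta$ near zero from Lemma~\ref{lem:h_aux} with the local bootstrap bound of Lemma~\ref{lem:h_L1_loc}; in both cases you add the total variation bound of Lemma~\ref{lem:bound_Gamma_1}. You also make explicit two details the paper leaves implicit: the shift identity $\Gamma_n(s)=\Gamma(s+\tfrac1n)-\Gamma(1+\tfrac1n)$, and the reduction of a possibly non-integer $\eta$ to an integer $m$ via $x^\eta\le 1+x^m$.
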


\begin{proof}
First, observe that if the integrability condition~\eqref{eq:hyp_g_int} holds, the sequence $\Gamma_n(u_n)$ is bounded in $\lio$. Indeed, this follows from the fact that $\Gamma$ is a bounded function in $[0,M]$, where $M>0$ is the uniform bound of $u_n$ in $\lio$ established in~\eqref{eq:lio_bound}. Consequently, by Lemma~\ref{lem:bound_Gamma_1}, we conclude that the sequence $\Gamma_n(u_n)$ is bounded in $\bvo$.

On the other hand, if $f$ satisfies~\eqref{eq:hyp_f_pos} and~\eqref{eq:hyp_sing} hold, we can make use of Lemma~\ref{lem:h_aux} to ensure the existence of some constants $c,\tilde s>0$ and $n_0\in \N$ such that
\[
|\Gamma_n(u_n)| \leq c \tilde h_n(u_n)^\eta \chi_{\{u_n\leq \tilde s\}} + |\Gamma_n(u_n)| \chi_{\{u_n > \tilde s\}}, \ \forall n\geq n_0.
\]
By Lemma~\ref{lem:h_L1_loc}, the term $\tilde h_n(u_n)^\eta$ is uniformly bounded in $L^1_{\rm loc}(\Omega)$. Moreover, since $\Gamma$ is a bounded function in $[\tilde s,M]$, the term $|\Gamma_n(u_n)| \chi_{\{u_n > \tilde s\}}$ is uniformly bounded in $\lio$. These estimates, combined with Lemma~\ref{lem:bound_Gamma_1}, imply that $\Gamma_n(u_n)$ is a bounded sequence in $BV_{\rm loc}(\Omega)$.
\end{proof}

Now, we are in a position to prove the existence theorem. 

\begin{proof}[Proof of Theorem~\ref{th:Exist}]

By virtue of Lemma~\ref{lem:bound_u_n}, there exists a subsequence of $u_n$ (not relabelled) and some non-negative $u\in \bvo$ such that
\begin{equation*}
    u_n\to u \text{ in } L^1(\Omega) \qquad \text{ and } \qquad u_n\to u \ \ae \text{ in } \Omega.
\end{equation*}

Moreover, since $\into h_n(u_n)f$ is bounded by Lemma~\ref{lem:bound_Gamma_1}, the Fatou Lemma implies that $h(u)f\in L^1(\Omega)$. Given that $f>0$, it follows that $u>0$ $\ae$ in $\Omega$ if $h(0)=+\infty$. It is worth noting that, in our framework, this condition is always satisfied when the integrability condition~\eqref{eq:hyp_g_int} does not hold, as in that case we assume the singular behaviour~\eqref{eq:hyp_sing}.

In addition, from Lemma~\ref{lem:bound_Gamma_2} we deduce that $\Gamma_n(u_n)$ has a subsequence (not relabelled) such that $\Gamma_n(u_n)$ converges in $L^1_{\rm loc}(\Omega)$ and $\ae$ in $\Omega$ to some function belonging to $BV_{\rm loc}(\Omega)$. As $u_n\to u$ $\ae$ in $\Omega$, the limit of $\Gamma_n(u_n)$ must be $\Gamma(u)$. While this is immediate when $g$ is integrable, in the non-integrable case we rely on the fact that $u>0$ $\ae$ in $\Omega$ since~\eqref{eq:hyp_sing} holds. Therefore, $\Gamma(u)\in BV_{\rm loc}(\Omega)$.

Finally, since the sequence $e^{\Gamma_n(u_n)}$ is bounded in $\lio$ (as a consequence of the boundedness of $u_n$) and $\Gamma_n(u_n)\to \Gamma(u)$ $\ae$ in $\Omega$, the Lebesgue Theorem implies that
\begin{equation}
\label{eq:conv_e^Gamma}
    e^{\Gamma_n(u_n)} \to e^{\Gamma(u)} \text{ in } L^q(\Omega) \text{ for all } 1\leq q <+ \infty.
\end{equation}

In the following, for the sake of clarity, we divide the proof into several steps.
\medskip 

\textbf{Step 1.} Existence of the vector field $z$.
\smallskip

As $\|z_n\|_{L^\infty(\Omega)^N}\le 1$, then there exists $z\in L^\infty(\Omega)^N$ such that $z_n\rightharpoonup z$ *-weakly in $L^\infty(\Omega)^N$. Since the norm is *-weakly lower semicontinuous, then we also have $\|z\|_{L^\infty(\Omega)^N}\leq 1$.
\medskip

\textbf{Step 2.} The vector field $z$ is in $\DM(\Omega)$.
\smallskip

Taking a non-negative $\varphi\in C_c^1(\Omega)$ as test function in~\eqref{eq:app_weak_form}, we obtain
\[
\into z_n \cdot \nabla \varphi = \into \varphi |D\Gamma_n(u_n)| + \into h_n(u_n)f \varphi.
\]
In the first integral, the limit is readily obtained by using that $z_n\rightharpoonup z$ *-weakly in $L^\infty(\Omega)^N$. For the second term, we exploit the lower semicontinuity of the functional $w \mapsto \into \varphi |Dw|$ on $\bvo$ with respect to the $L^1$-convergence; note that $\Gamma_n(u_n)\to \Gamma(u)$ locally in $L^1(\Omega)$. Finally, the last term is handled via Fatou's Lemma. Then, we deduce that
\begin{equation*}
\into z \cdot \nabla \varphi \geq \into \varphi |D\Gamma(u)| + \into h(u)f \varphi,
\end{equation*}
and thus
\begin{equation}
    \label{eq:prov8}
    -\operatorname{div} z \geq |D\Gamma(u)| + h(u)f \text{ in } \mathcal{D}'(\Omega).
\end{equation}
Therefore,   by Lemma~\ref{lem:Global_z}, one has $z\in \DM(\Omega)$.

\medskip
\textbf{Step 3.} An auxiliary distributional  inequality. 
\smallskip

We claim that 
\begin{equation}
\label{eq:prov1}
-\operatorname{div} \left( e^{\Gamma(u)}z \right) \leq h(u) f e^{\Gamma(u)} \text{ as measures in } \Omega.
\end{equation}
To prove~\eqref{eq:prov1}, we distinguish between two cases: $h(0)<+\infty$ and $h(0)=+\infty$.

Suppose first that $h(0)<+\infty$. Then, given $0\leq \varphi \in C_c^1(\Omega)$, we can take $e^{\Gamma_n(u_n)} \varphi \in \bvo\cap \lio$ as test function in~\eqref{eq:app_weak_form} to obtain
\begin{equation}
\label{eq:prov2}
\into \varphi \left(z_n, De^{\Gamma_n(u_n)}\right) + \into e^{\Gamma_n(u_n)} z_n \cdot \nabla \varphi = \into g_n(u_n) e^{\Gamma_n(u_n)} \varphi |Du_n| + \into h_n(u_n) e^{\Gamma_n(u_n)} f \varphi.
\end{equation}
By virtue of Lemma~\ref{lem:Composition}, it follows that $\left(z_n, De^{\Gamma_n(u_n)} \right) = \left| De^{\Gamma_n(u_n)} \right|$ and, since $u_n$ has no jump part, it follows from Lemma \ref{lem:ChainRule_AFP} that $\left| De^{\Gamma_n(u_n)} \right| = g_n(u_n) e^{\Gamma_n(u_n)} |Du_n|$. Cancelling terms in~\eqref{eq:prov2}, it yields
\begin{equation}
\label{eq:prov3}
\into e^{\Gamma_n(u_n)} z_n \cdot \nabla \varphi = \into h_n(u_n) e^{\Gamma_n(u_n)} f \varphi.
\end{equation}
To pass to the limit in~\eqref{eq:prov3}, we exploit the *-weak convergence of $z_n$ in $L^\infty(\Omega)^N$, the strong convergence of $e^{\Gamma_n(u_n)}$ provided by~\eqref{eq:conv_e^Gamma}, the boundedness of $h$, which follows from $h(0)<+\infty$ and assumption~\eqref{eq:hyp_h_inf}. Therefore, passing to the limit in~\eqref{eq:prov3} we arrive at
\begin{equation*}
\into e^{\Gamma(u)} z \cdot \nabla \varphi = \into h(u) e^{\Gamma(u)} f \varphi
\end{equation*}
and then~\eqref{eq:prov1} holds.

On the contrary, suppose now that $h(0)=+\infty$. In this case, observe that, as previously noted, $u>0$ $\ae$ in $\Omega$, which follows from the fact that $h(u)f\in L^1(\Omega)$ and $f>0$. Let $0\leq \varphi \in C_c^1(\Omega)$, let $\delta>0$, and let us define the non-decreasing function $S_\delta(s) = \min \big\{1,\max \big\{0,\frac{s-\delta}{\delta}\big \}\big\}$ for all $s\geq 0$. Taking $S_\delta(u_n) e^{\Gamma_n(u_n)} \varphi \in\bvo\cap\lio$ as test function in~\eqref{eq:app_weak_form} and using the Leibniz rule~\eqref{eq:Prod}, it yields
\begin{equation}
\label{eq:prov4}
\begin{split}
\into S_\delta(u_n) \varphi \left(z_n, De^{\Gamma_n(u_n)}\right) &+ \into e^{\Gamma_n(u_n)} \varphi \left(z_n, DS_\delta(u_n) \right) + \into e^{\Gamma_n(u_n)} S_\delta(u_n) z_n \cdot \nabla \varphi \\
&= \into g_n(u_n) S_\delta(u_n) e^{\Gamma_n(u_n)} \varphi |Du_n| + \into h_n(u_n) S_\delta(u_n) e^{\Gamma_n(u_n)} f \varphi.
\end{split}
\end{equation}
We now define the auxiliary functions
\[
\Psi_{\delta}(s) = \int_0^s S_\delta'(t) e^{\Gamma(t)} \, \mathrm{d}t,\ \forall s\geq 0, \qquad \text{ and } \qquad \Psi_{\delta,n}(s) = \int_0^s S_\delta'(t) e^{\Gamma_n(t)} \, \mathrm{d}t,\ \forall s\geq 0.
\] 
As previously argued, the identity $\left(z_n, De^{\Gamma_n(u_n)} \right) = g_n(u_n) e^{\Gamma_n(u_n)} |Du_n|$ allows us to cancel two terms in~\eqref{eq:prov4}. Furthermore, Remark~\ref{rem:Composition_nondecr}  ensures that $(z_n, DS_\delta(u_n)) = |DS_\delta(u_n)|$. Given that $u_n$ has no jump part, it follows that $|DS_\delta(u_n)|e^{\Gamma_n(u_n)} = |D\Psi_{\delta,n}(u_n)|$. Therefore, identity~\eqref{eq:prov4} can be rewritten as
\begin{equation}
\label{eq:prov5}
\into \varphi |D\Psi_{\delta,n}(u_n)| + \into e^{\Gamma_n(u_n)} S_\delta(u_n) z_n \cdot \nabla \varphi = \into h_n(u_n) S_\delta(u_n) e^{\Gamma_n(u_n)} f \varphi.
\end{equation}
In the following, we pass to the limit in~\eqref{eq:prov5} when $n\to+\infty$. Regarding the first term, we exploit the lower semicontinuity of the functional $w \mapsto \into \varphi |Dw|$ defined in $\bvo$ with respect to the $L^1$-convergence; note that $\Psi_{\delta,n}(u_n)\to \Psi_\delta(u)$ in $L^1(\Omega)$ when $n$ diverges. In the second integral, one can take advantage of the fact that ${z_n\rightharpoonup z}$ *-weakly in $L^\infty(\Omega)^N$. With respect to the last term, observe that for every $n\in\N$, the product $h_n(s)S_\delta(s)$ vanishes for $s<\delta$ and, as a consequence, $h_n(u_n) S_\delta(u_n) e^{\Gamma_n(u_n)}$ is uniformly bounded in $\lio$, allowing for the application of the Lebesgue Theorem. In this way, we obtain that
\begin{equation}
\label{eq:prov6}
\into \varphi |D\Psi_{\delta}(u)| + \into e^{\Gamma(u)} S_\delta(u) z \cdot \nabla \varphi \leq \into h(u) S_\delta(u) e^{\Gamma(u)} f \varphi.
\end{equation}
Next, we pass to the limit in~\eqref{eq:prov6} as $\delta\to 0^+$. With respect to the first term of~\eqref{eq:prov6}, we use again the lower semicontinuity of the functional $w \mapsto \into \varphi |Dw|$; note that $\Psi_\delta(u)\to e^{\Gamma(0)}\chi_{\{u>0\}}$ in $L^1(\Omega)$ when $\delta\to 0^+$. In the remaining integrals, the Lebesgue Theorem can be applied, since $h(u)f\in L^1(\Omega)$ and $e^{\Gamma(u)}\in \lio$. Hence, it yields
\begin{equation}
\label{eq:prov7}
\into \varphi e^{\Gamma(0)} \left| D\chi_{\{u>0\}} \right| + \into e^{\Gamma(u)} \chi_{\{u>0\}} z \cdot \nabla \varphi \leq \into h(u) \chi_{\{u>0\}} e^{\Gamma(u)} f \varphi.
\end{equation}
Finally, since $u>0$ $\ae$ in $\Omega$, the inequality~\eqref{eq:prov7} implies that~\eqref{eq:prov1} holds.

\medskip
\textbf{Step 4.} The limit $u$ has no jump part.
\smallskip

Here, we use the alternative pairing defined in~\eqref{eq:new_Pairing}. Following the notation introduced therein, we set $\beta(s) = e^s$, $v=\Gamma(u)$, and consider the pairing $\left( z, D\beta(v)^\# \right)$. Henceforth, when we write $\left(z, D\left( e^{\Gamma(u)} \right)^\# \right)$ we mean the pairing $\left( z, D\beta(v)^\# \right)$. It then follows that, as measures in $\Omega$,
\begin{equation}
\label{eq:prov9}
\begin{split}
    \left(e^{\Gamma(u)} \right)^\# |D\Gamma(u)| & \overset{\eqref{eq:prov8}}{\leq} -\left(e^{\Gamma(u)} \right)^\# \operatorname{div}z - e^{\Gamma(u)} h(u)f \overset{\eqref{eq:prov1}}{\leq} - \left(e^{\Gamma(u)} \right)^\# \operatorname{div}z + \operatorname{div} \left( e^{\Gamma(u)}z \right)\\
    & \overset{\eqref{eq:new_Pairing}}{=} \left(z, D\left( e^{\Gamma(u)} \right)^\# \right) \leq \left|De^{\Gamma(u)} \right| \overset{\eqref{eq:chain_rule_2}}{=} \left(e^{\Gamma(u)} \right)^\# |D\Gamma(u)|.
\end{split}
\end{equation}
In this way, all the inequalities in~\eqref{eq:prov9} are equalities. In particular, we have that
\begin{equation}
\label{eq:prov14}
    \left(z, D\left( e^{\Gamma(u)} \right)^\# \right) = \left|De^{\Gamma(u)} \right| \text{ as measures in } \Omega.
\end{equation}
We can then use Lemma~\ref{lem:no_jump} in~\eqref{eq:prov8} with $\alpha(s)=s$, $\beta(s)=e^s$ and $v=\Gamma(u)$ to deduce that $D^j \Gamma(u)=0$. Since $\Gamma$ is increasing, we conclude that $D^j u=0$.
\medskip

\textbf{Step 5.} Identification of the vector field $z$ given by~\eqref{eq:def_pair}.
\smallskip

Since $u$ has no jump part and the mapping $s\mapsto e^{\Gamma(s)}$ is increasing, it follows that $e^{\Gamma(u)}$ also has no jump part. Therefore, the identity $\left( e^{\Gamma(u)} \right)^\# = e^{\Gamma(u)}$ holds $\mathcal{H}^{N-1}$-$\ae$ in $\Omega$ and~\eqref{eq:prov14} can be rewritten as
\[
\left(z, De^{\Gamma(u)} \right) = \left|De^{\Gamma(u)} \right| \text{ as measures in } \Omega.
\]
Since $s\mapsto e^{\Gamma(s)}$ is an increasing mapping and $u\in \bvo\cap\lio$, Lemma~\ref{lem:Composition} ensures that the identity~\eqref{eq:def_pair} holds.

\medskip

\textbf{Step 6.} Distributional formulation~\eqref{eq:def_dist}.
\smallskip

From~\eqref{eq:prov9}, recalling that $u$ has no jump part, we deduce that
\begin{equation}
\label{eq:prov18}
   e^{\Gamma(u)} \left(-\operatorname{div} z - |D\Gamma(u)| - h(u)f \right) = 0 \text{ as measures in } \Omega.
\end{equation}
We claim that $e^{\Gamma(u)}$ is strictly positive $\mathcal{H}^{N-1}$-a.e. in $\Omega$. This is immediate when $\Gamma(0)$ is finite, since in that case $e^{\Gamma(u)}\geq e^{\Gamma(0)}>0$. In the non-integrable case, it follows from the fact that $\Gamma(u)\in BV_{\rm loc}(\Omega)$, since the precise representative of a $BV_{\rm loc}$ function is finite $\mathcal{H}^{N-1}$-a.e. in $\Omega$. Since $\Gamma$ is increasing and $D^j u = 0$, at every point $x$ where $u^*(x) = 0$, the function $\Gamma(u)$ has approximate limit $\Gamma(0) = -\infty$. Consequently, we deduce that
\begin{equation}
\label{eq:prov19}
\mathcal{H}^{N-1}\left(\left\{ x\in\Omega : u^*(x)=0 \right\}\right) = 0,
\end{equation}
which establishes the claim.

Let us now denote $\mu:= -\operatorname{div} z - |D\Gamma(u)| - h(u)f$, which is a non-negative measure by~\eqref{eq:prov8}. On the one hand, \eqref{eq:prov18} implies that $\mu$ vanishes on the set where $e^{\Gamma(u)}$ is positive. On the other hand, $0\leq \mu \leq -\operatorname{div}z$ and $\operatorname{div}z$ is absolutely continuous with respect to $\mathcal{H}^{N-1}$ (see~\cite[Proposition~3.1]{ChenFrid}), so that $\mu$ does not charge $\mathcal{H}^{N-1}$-negligible sets. Since $e^{\Gamma(u)}>0$ $\mathcal{H}^{N-1}$-$\ae$ in $\Omega$, we conclude that $\mu=0$, that is,
\begin{equation}
\label{eq:prov16}
    -\operatorname{div} z = |D\Gamma(u)| + h(u)f \text{ as measures in } \Omega.
\end{equation}
Finally, since $u$ has no jump part, Lemma~\ref{lem:ChainRule} implies that $\chi_{\{u>0\}}|D\Gamma(u)| = g(u) |Du|$, and the identity $|D\Gamma(u)| = g(u) |Du|$ follows at once when $\Gamma(0)$ is finite. In the non-integrable case, it is enough to observe that $|D\Gamma(u)|\leq -\operatorname{div}z$ by~\eqref{eq:prov8}, so that $|D\Gamma(u)|$ is absolutely continuous with respect to $\mathcal{H}^{N-1}$ and therefore, by~\eqref{eq:prov19}, it does not charge $\{u=0\}$.

\medskip

\textbf{Step 7.} Boundary condition~\eqref{eq:def_bord}.
\smallskip

Let $\phi$ be a non-negative Lipschitz function on $\partial\Omega$. Then there exists $\varphi \in C(\overline{\Omega})$ which is a non-negative Lipschitz function on $\Omega$ and such that $\varphi\big|_{\partial\Omega} = \phi$. 

Fixing $\delta >0$ and $m\in \N$, we choose $G_\delta(u_n)^m\varphi \in BV(\Omega)\cap \lio$ as test function in~\eqref{eq:app_weak_form}, where $G_\delta$ is defined in~\eqref{eq:def_Tk_Gk}. Applying the Leibniz rule~\eqref{eq:Prod}, we obtain
\begin{equation*}
\begin{split}
\into \varphi \left(z_n, DG_\delta(u_n)^m \right) &+ \into G_\delta(u_n)^m z_n \cdot \nabla \varphi - \intdo G_\delta(u_n)^m \phi [z_n,\nu] \ \dH\\
&= \into g_n(u_n) G_\delta(u_n)^m \varphi |Du_n| + \into h_n(u_n) G_\delta(u_n)^m f \varphi.
\end{split}
\end{equation*}
By Remark \ref{rem:Composition_nondecr}, we have the identity $\left(z_n, DG_\delta(u_n)^m \right) = \left|DG_\delta(u_n)^m \right|$. Moreover, the equality $u_n(1+[z_n,\nu])=0$ $\mathcal{H}^{N-1}$-$\ae$ on $\partial\Omega$ implies that $G_\delta(u_n)^m[z_n,\nu]=-G_\delta(u_n)^m$ $\mathcal{H}^{N-1}$-$\ae$ on $\partial\Omega$, since $G_\delta(0)=0$. Recalling that $u_n$ has no jump part, we obtain that
\begin{equation}
\label{eq:prov15}
\begin{split}
m\into G_\delta(u_n)^{m-1} \varphi | Du_n | &+ \into G_\delta(u_n)^m z_n \cdot \nabla \varphi + \intdo G_\delta(u_n)^m \phi \ \dH\\
&= \into g_n(u_n) G_\delta(u_n)^m \varphi |Du_n| + \into h_n(u_n) G_\delta(u_n)^m f \varphi.
\end{split}
\end{equation}

Now, we fix $M>0$ such that $\|u_n\|_\lio \leq M$ for all $n\in \N$ (see~\eqref{eq:lio_bound}). For each $s\in [0,M]$ and $n\in \N$,  we define the auxiliary functions
\begin{equation*}
\tilde \Gamma_\delta(s):= \int_0^s \big(m-g(t) G_\delta(t)\big) G_\delta(t)^{m-1} \, \mathrm{d}t \qquad \text{and} \qquad \tilde \Gamma_{\delta,n}(s):= \int_0^s \big(m-g_n(t) G_\delta(t)\big) G_\delta(t)^{m-1} \, \mathrm{d}t.
\end{equation*}
From now on, we consider $m$ sufficiently large such that $\tilde \Gamma_\delta, \tilde \Gamma_{\delta, n}$ are non-decreasing. Using again that $u_n$ has no jump part, and taking into account that $\tilde\Gamma_{\delta, n}(s) \leq G_\delta(s)^m$ for all $s\in [0,M]$, we deduce from~\eqref{eq:prov15} that 
\begin{equation}
\label{eq:prov11}
\into \varphi | D \tilde\Gamma_{\delta,n}(u_n) | + \intdo \tilde\Gamma_{\delta,n}(u_n) \phi \ \dH \leq  \into h_n(u_n) G_\delta(u_n)^m f \varphi - \into G_\delta(u_n)^m z_n \cdot \nabla \varphi.
\end{equation}
To pass to the limit in~\eqref{eq:prov11} as $n\to+\infty$, we invoke the lower semicontinuity of the functional $w \mapsto \into \varphi|Dw| + \intdo |w| \varphi$ defined in $BV(\Omega)$ with respect to the $L^1(\Omega)$-convergence; note that $\tilde \Gamma_{\delta,n}(u_n) \to \tilde \Gamma_\delta(u)$ in $L^1(\Omega)$ as $n$ diverges. Regarding the third integral, the Lebesgue Theorem is applicable since $h_n(s)G_\delta(s)$ vanishes for $s<\delta$, ensuring that $h_n(u_n) G_\delta(u_n)^m$ is uniformly bounded in $\lio$. Finally, also using the *-weak convergence in $L^\infty(\Omega)^N$ of $z_n$, the passage to the limit in~\eqref{eq:prov11} yields 
\begin{equation}
\label{eq:prov12}
\into \varphi |D\tilde \Gamma_\delta(u)| + \intdo \tilde \Gamma_\delta(u) \phi \ \dH \leq \into h(u) G_\delta(u)^m f \varphi - \into G_\delta(u)^m z  \cdot \nabla \varphi.
\end{equation}
Inequality~\eqref{eq:prov12} can be simplified as follows. Since $G_\delta(u)^m \in \lio$, from~\eqref{eq:prov16} one deduces that
\begin{equation}
\label{eq:prov17}
-G_\delta(u)^m \operatorname{div}z = g(u) G_\delta(u)^m |Du| + h(u)G_\delta(u)^m f \text{ as measures in } \Omega.
\end{equation}
We now substitute~\eqref{eq:prov17} into~\eqref{eq:prov12}. Taking into account the identity $|D\tilde \Gamma_\delta(u)| = |DG_\delta(u)^m| - g(u)G_\delta(u)^m |Du|$ and cancelling terms, we find that~\eqref{eq:prov12} reduces to
\begin{equation*}
\into \varphi |DG_\delta(u)^m| + \intdo \tilde\Gamma_\delta(u) \phi \ \dH \leq - \into G_\delta(u)^m \varphi \operatorname{div}z - \into G_\delta(u)^m z  \cdot \nabla \varphi.
\end{equation*}
Using the Green identity~\eqref{eq:Green} and the Leibniz rule~\eqref{eq:Prod}, we obtain that
\begin{equation*}
\into \varphi |DG_\delta(u)^m| + \intdo \tilde\Gamma_\delta(u) \phi \ \dH \leq \into \varphi \left(z, DG_\delta(u)^m \right) - \intdo G_\delta(u)^m \phi [z,\nu] \ \dH.
\end{equation*}
Since $(z,DG_\delta(u)^m) = |DG_\delta(u)^m|$ by Remark \ref{rem:Composition_nondecr}, we conclude that  
\begin{equation*}
\intdo \phi \left(\tilde\Gamma_\delta(u) + G_\delta(u)^m [z,\nu] \right) \ \dH = \intdo \phi \left(G_\delta(u)^m(1 + [z,\nu]) - \int_0^u g(t)G_\delta(t)^m\, \mathrm{d}t \right) \ \dH \leq 0,
\end{equation*}
which, by an immediate density argument, holds for every non-negative $\phi\in C(\partial\Omega)$. This implies the existence of a subset $S\subset\partial\Omega$ with $\mathcal{H}^{N-1}(S)=0$ such that, for every $\delta>0$, it holds
\[
G_\delta(u(x))^m(1 + [z,\nu](x)) - \int_0^{u(x)} g(t)G_\delta(t)^m\, \mathrm{d}t \leq 0,\ \forall x\in\partial\Omega \setminus S.
\]

For any $x\in \partial\Omega \setminus S$ such that $u(x)>0$, we may choose $\delta>0$ small enough ($0<\delta<u(x)$) to obtain
\begin{equation*}
1 + [z,\nu](x) \leq \frac{\int_0^{u(x)} g(t) G_\delta(t)^m\, \mathrm{d}t}{G_\delta(u(x))^m} \leq \frac{\max_{s\in [\delta,M]}g(s)}{m+1} G_\delta(u(x)), \forall m\gg 1.
\end{equation*}
Letting $m\to+\infty$, it follows that $1 + [z,\nu](x)=0$. This argument shows that for each $x\in \partial\Omega \setminus S$, either $u(x)=0$ or $1 + [z,\nu](x)=0$. Therefore, we have shown that $u(x) (1 + [z,\nu](x)) = 0$ for $\mathcal{H}^{N-1}$-$\ae$ $x\in\partial\Omega$, which concludes the proof.
\end{proof}

\section{Non-zero trace solutions}
\label{sec:Trace}

This section is devoted to the proof of Theorem~\ref{th:Trace}, where we show that no solution to~\eqref{eq:PbMain} with zero boundary trace can exist if $g$ does not satisfy~\eqref{eq:hyp_g_int}.

\begin{proof}[Proof of Theorem~\ref{th:Trace}]
Let $u\in BV(\Omega)$ be a solution to~\eqref{eq:PbMain}. Note that $u$ is positive a.e. in $\Omega$ since $\Gamma(u)\in BV_{\rm loc}(\Omega)$ and $\Gamma(0)=-\infty$. We define the auxiliary function
\begin{equation*}
	\tilde g(s) = \begin{cases}
		g(s), & 0<s<1,\\
		\min \left\{g(s), s^{-2} \right\}, & s\geq 1.
	\end{cases}
\end{equation*}
Observe that $\tilde g(s) \leq g(s)$ for any $s>0$ and that $\tilde g(s)$ is integrable at infinity; in fact, $\int_1^\infty \tilde g(t) \, \mathrm{d}t\leq 1$. We denote 
\[
\tilde \Gamma(s) := \int_1^s \tilde g(t)\, \mathrm{d}t,\ \forall s>0 \qquad \text{ and } \qquad 
\tilde \Gamma_n(s) := \int_1^s T_n(\tilde g(t))\, \mathrm{d}t,\ \forall s\geq 0, \ \forall n\in\N.
\] 
We point out that each $\tilde \Gamma_n$ is increasing and belongs to $C^1([0,+\infty))$. Furthermore, it holds that $\lim_{s\to+\infty} \tilde \Gamma_n(s) \leq \int_1^\infty \tilde g(t) \, \mathrm{d}t \leq 1$ for every $n\in\N$ and, since $g(s)$ is not integrable at $s=0$, we also have $\lim_{n\to+\infty} \tilde \Gamma_n(0) = -\infty$.

We take $e^{\tilde \Gamma_n(u)} \in BV(\Omega) \cap L^\infty(\Omega)$ as test function in~\eqref{eq:PbMain} to obtain, after using Green's identity~\eqref{eq:Green}, that
\[
\into \left(z,De^{\tilde \Gamma_n(u)}\right) - \intdo e^{\tilde \Gamma_n(u)}[z,\nu] \ \dH = \into g(u) e^{\tilde \Gamma_n(u)} |Du| + \into h(u)e^{\tilde \Gamma_n(u)} f.
\]
Since $s\mapsto e^{\tilde \Gamma_n(s)}$ is an increasing function, using Lemma~\ref{lem:Composition} we obtain that
\[
\into \left| De^{\tilde \Gamma_n(u)} \right| - \intdo e^{\tilde \Gamma_n(u)}[z,\nu] \ \dH = \into g(u) e^{\tilde \Gamma_n(u)} |Du| + \into h(u) e^{\tilde \Gamma_n(u)} f.
\]
As $u$ has no jump part, we can use the chain rule to obtain
\begin{equation}
    \label{eq:tr1}
    \into T_n(\tilde g(u)) e^{\tilde \Gamma_n(u)} |Du| - \intdo e^{\tilde \Gamma_n(u)} [z,\nu] \ \dH = \into g(u) e^{\tilde \Gamma_n(u)} |Du| + \into h(u) e^{\tilde \Gamma_n(u)} f.
\end{equation}

In the following, we want to pass to the limit in~\eqref{eq:tr1}. Observe that $e^{\tilde \Gamma_n(s)} \to e^{\tilde \Gamma(s)}$ as $n\to +\infty$ for any $s\geq 0$, where $e^{\tilde \Gamma(0)}:=0$ since $\tilde \Gamma(0)=-\infty$. Moreover, $e^{\tilde \Gamma_n(s)}$ is uniformly bounded because $e^{\tilde \Gamma_n(s)} \leq e^{\lim_{t\to+\infty} \tilde \Gamma_n(t)} \leq e$ for any $s\geq 0$ and $n\in\N$. As a consequence, $e^{\tilde \Gamma_n(u)}$ is bounded in $\lio$ and $L^\infty(\partial\Omega)$, and satisfies that $e^{\tilde \Gamma_n(u)} \to e^{\tilde \Gamma(u)}$ $\ae$ in $\Omega$ and $\mathcal{H}^{N-1}$-$\ae$ on $\partial\Omega$. Finally, note that $T_n(\tilde g(u)) \to \tilde g(u)$ $\ae$ in $\Omega$, $T_n(\tilde g(u))\leq \tilde g(u)$ and $g(u), \tilde g(u) \in L^1(\Omega,|Du|)$ (see Remark~\ref{rem:g|Du|}). Using the Lebesgue Theorem, one can pass to the limit to deduce
\begin{equation}
    \label{eq:tr2}
    \into \tilde g(u)e^{\tilde \Gamma(u)} |Du| - \intdo e^{\tilde \Gamma(u)} [z,\nu] \ \dH = \into g(u) e^{\tilde \Gamma(u)} |Du| + \into h(u) e^{\tilde \Gamma(u)} f.
\end{equation}

Since $e^{\tilde \Gamma(0)}=0$, from the boundary condition~\eqref{eq:def_bord} we deduce that $e^{\tilde \Gamma(u)}[z,\nu] = -e^{\tilde \Gamma(u)}$ $\mathcal{H}^{N-1}$-a.e. on $\partial\Omega$. Taking into account that $\tilde g(s) \leq g(s)$ for all $s>0$, we drop non-negative terms to obtain from~\eqref{eq:tr2} that 
\[
\intdo e^{\tilde \Gamma(u)} \ \dH \geq \into h(u) e^{\tilde \Gamma(u)} f.
\]
We underline that $e^{\tilde \Gamma(s)}=0$ if and only if $s=0$. Therefore, since $u>0$ $\ae$ in $\Omega$ and $h$ is a positive function, one has $\into h(u) e^{\tilde \Gamma(u)} f>0$. From this, we obtain that $\intdo e^{\tilde \Gamma(u)} \ \dH >0$. In this way, we conclude that there exists some $S\subset \partial\Omega$ with $\mathcal{H}^{N-1}(S)>0$ such that $u(x)>0$ for any $x\in S$.
\end{proof}

\begin{remark}
If $g$ is integrable at infinity, there is no need to introduce $\tilde g$: the entire argument can be carried out by replacing $\tilde g$ with $g$.
\end{remark}

\begin{corollary}
\label{cor:Trace}
Let $0<f\in L^1(\Omega)$, and assume that $g$ does not satisfy~\eqref{eq:hyp_g_int} and $h$ is positive.  If $\Omega$ is a ball, then any radially symmetric solution $u$ to~\eqref{eq:PbMain} satisfies $u\lvert_{\partial\Omega}>0$.
\end{corollary}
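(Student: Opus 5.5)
The plan is to combine Theorem~\ref{th:Trace} with the symmetry of the trace of a radial function on a sphere. Let $\Omega=B_R(x_0)$ and let $u$ be a radially symmetric solution. Write $u(x)=v(|x-x_0|)$ with $v$ defined on $(0,R)$. Because $u\in\bvo$, the one-dimensional profile $v$ belongs to $BV_{\rm loc}((0,R])$ with weight $r^{N-1}$, and near $r=R$ it has a one-sided limit $v(R^-)$. I expect the boundary trace of $u$ to be this constant $\mathcal{H}^{N-1}$-a.e. on $\partial\Omega$. Concretely, I will show that the trace is invariant under rotations about $x_0$ and is therefore a.e.\ constant on $\partial\Omega$. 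Rotation invariance follows because the trace operator $BV(\Omega)\to L^1(\partial\Omega)$ commutes with isometries of the ball: if $u\circ R = u$ for every rotation $R$, then $(\mathrm{tr}\,u)\circ R = \mathrm{tr}\,u$ $\mathcal{H}^{N-1}$-a.e. for each $R$.

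Next I need to pass from invariance under each rotation to a.e.\ constancy. A measurable function on the sphere invariant (a.e.) under every rotation is a.e.\ constant. To see this, average over $SO(N)$ with Haar measure and use Fubini. Alternatively, note that the trace equals the limit of the averages $\frac{1}{|B_\rho(y)\cap\Omega|}\int_{B_\rho(y)\cap\Omega}u$ at $\mathcal H^{N-1}$-a.e.\ $y\in\partial\Omega$ (a standard property of traces of $BV$ functions on Lipschitz domains). These averages are independent of $y\in\partial\Omega$ by radial symmetry, so the trace is a single constant $c\geq 0$ at all Lebesgue points.

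Finally I apply Theorem~\ref{th:Trace}. Since $u$ is a solution and $g$ fails \eqref{eq:hyp_g_int}, the trace cannot vanish a.e. More precisely, the proof of Theorem~\ref{th:Trace} gives a set $S\subset\partial\Omega$ with $\mathcal{H}^{N-1}(S)>0$ on which $u>0$. As the trace is the constant $c$ a.e., we get $c>0$, that is, $u\lvert_{\partial\Omega}=c>0$ $\mathcal{H}^{N-1}$-a.e. on $\partial\Omega$.

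The only delicate step is the rigorous identification of the trace as a constant. I would handle it with the Lebesgue-point characterisation of the $BV$ trace, which avoids any discussion of one-dimensional profiles. Everything else is a direct consequence of Theorem~\ref{th:Trace}.
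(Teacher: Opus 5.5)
Your argument is correct and matches what the paper intends. The paper states the corollary without a separate proof, as an immediate consequence of Theorem~\ref{th:Trace}: the trace of a radially symmetric $BV$ function on a ball is $\mathcal{H}^{N-1}$-a.e.\ a constant $c\geq 0$, and Theorem~\ref{th:Trace} forces that constant to be positive. Your Lebesgue-point justification of the constancy, using that the averages over $B_\rho(y)\cap\Omega$ do not depend on $y\in\partial\Omega$ by rotation invariance, is the cleanest way to make this step rigorous.
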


\section{On the sharpness of the main results}
\label{sec:Sharpness} 

This section is devoted to discussing the necessity of some of the assumptions in the main results of Section~\ref{sec:Main}.

Regarding the smallness condition on $f$ in Theorem~\ref{th:Exist}, note that it is formulated for data $f$ belonging to $L^{N,\infty}(\Omega)$, a space strictly larger than $L^N(\Omega)$ (for instance, $|x-x_0|^{-1} \in L^{N,\infty}(\Omega) \setminus L^N(\Omega)$ for any $x_0 \in \Omega$). However, this broader framework imposes a trade-off: the smallness condition $\|f\|_{L^{N,\infty}(\Omega)} < \left(\tilde{\mathcal{S}}_1 h(\infty)\right)^{-1}$ involves the constant $\tilde{\mathcal{S}}_1$, which comes from the embedding $BV(\Omega) \hookrightarrow L^{\frac{N}{N-1},1}(\Omega)$.

When $f\in L^N(\Omega)$, one can instead employ the standard Sobolev embedding $BV(\Omega) \hookrightarrow L^{\frac{N}{N-1}}(\Omega)$, whose sharp constant is $\mathcal{S}_1$. Consequently, the alternative condition
\begin{equation}
\label{eq:smallness_ln}
\|f\|_{L^{N}(\Omega)} < \left(\mathcal{S}_1 h(\infty)\right)^{-1}
\end{equation}
also guarantees the existence of solution.

In general, for $f\in L^N(\Omega)$ one only has $\|f\|_{L^{N,\infty}(\Omega)} \leq \|f\|_{L^N(\Omega)}$, which means that neither condition implies the other when $h(\infty)>0$, so both must be taken into account. Nevertheless, for functions in $L^N(\Omega)$ whose $L^N$ and $L^{N,\infty}$ norms coincide (such as constant functions), condition~\eqref{eq:smallness_ln} is strictly less restrictive than the $L^{N,\infty}(\Omega)$ condition, due to the fact that $\mathcal{S}_1^{-1} = N\omega_N^{1/N} > \tilde{\mathcal{S}}_1^{-1} = (N-1)\omega_N^{1/N}$.

The next example, adapted from~\cite[Example~3]{DaSe}, shows the necessity of the smallness condition in Theorem~\ref{th:Exist}. It also demonstrates that the threshold~\eqref{eq:smallness_ln} is sharp for the existence of a solution to~\eqref{eq:PbMain} when $h(\infty) = \inf_{s\geq 0} h(s)$.

\begin{example}[Non-existence of solution]
Let $h$ be such that $\inf_{s\geq 0} h(s)>0$, and let $f=\lambda \chi_{B_R(x_0)}$, where $x_0\in\Omega$ and $R>0$ are chosen such that $B_R(x_0) \subset \Omega$. We take $\lambda>0$ large such that $\| f \|_{\lno} = \lambda |B_R(x_0)|^{1/N} > (\mathcal{S}_1 \inf_{s\geq 0} h(s))^{-1}$.

Suppose that there exists a solution $u\in BV(\Omega)$ to~\eqref{eq:PbMain}. Integrating in the ball $B_R(x_0)$ the expression~\eqref{eq:def_dist} and applying the Green formula~\eqref{eq:Green}, we obtain
\[
-\int_{\partial B_R(x_0)} [z,\nu] \ \dH = \int_{B_R(x_0)} g(u)|Du| + \int_{B_R(x_0)} h(u)f \geq \inf_{s\geq 0} h(s) \int_{B_R(x_0)} f = \inf_{s\geq 0} h(s) \lambda |B_R(x_0)|.
\]
Using that $\|[z,\nu]\|_{L^\infty(\partial B_R(x_0))} \leq 1$, it follows
\[
\inf_{s\geq 0} h(s) \|f\|_{\lno} = \inf_{s\geq 0} h(s) \lambda |B_R(x_0)|^{1/N} \leq \frac{\mathcal{H}^{N-1}(\partial B_R(x_0))}{|B_R(x_0)|^{(N-1)/N}} = N\omega_N^{1/N} = \mathcal{S}_1^{-1},
\]
which contradicts that $\| f \|_{\lno} > (\mathcal{S}_1 \inf_{s\geq 0} h(s))^{-1}$.
\end{example}

In Theorem~\ref{th:Exist}, we have shown the existence of a bounded solution. However, this qualitative property is not inherent to the structure of the problem, as unbounded solutions may also exist. Indeed, this occurs at least when $g, h\equiv 1$ and the datum is small and sufficiently integrable, as shown in~\cite{AbDaSe}. In what follows, we present a problem with singular terms $g$ and $h$ that admits an unbounded solution.

\begin{example}[Existence of unbounded solutions]
\label{ex:1}
We look for explicit solutions of problem ~\eqref{eq:PbMain} in $\Omega=B_R(0)$. We consider the model  case
\begin{equation}
\label{eq:PbExample1}
	\begin{cases}
		-\Delta_1 u = \lambda\frac{|Du|}{u^\theta} + \frac{f(x)}{u^\gamma} & \text{in } B_R(0),\\
		u=0 & \text{on } \partial B_R(0),
	\end{cases}
\end{equation}
where $\lambda, \theta, \gamma>0$. Assume that $u$ is a radially decreasing solution, i.e., that $u(x)=\rho(|x|)$ with $\rho'(s)<0$ for all $s\in(0,R)$. This implies that $z(x)=\frac{Du}{|Du|} = -\frac{x}{|x|}$ and that $-\diver z = \frac{N-1}{|x|}$. Observe that in this case the boundary condition is always verified, as $[z,\nu](x)=-1$ for all $x\in\partial B_R(0)$. Then, $u$ will be a solution of~\eqref{eq:PbExample1} if it belongs to $BV(\Omega)$ and satisfies the equation
\begin{equation}
\label{eq:PbExample2}
\frac{N-1}{|x|} = \lambda\frac{|\rho'(|x|)|}{\rho(|x|)^\theta} + \frac{f(x)}{\rho(|x|)^\gamma}.
\end{equation}
Assume now that $\theta=\gamma=1$, and consider $f(x)= \frac{N-1}{|x|} \in L^{N,\infty}(B_R(0))$. Then, it can be easily checked that
\[
u(x)=\rho(|x|) = 1+\left(\frac{R}{|x|}\right)^\frac{N-1}{\lambda}
\]
solves the equation~\eqref{eq:PbExample2} and
belongs to $BV(B_R(0))$ if $\lambda>1$. Thus, $u$ is an unbounded solution to~\eqref{eq:PbExample1} which differs from the one provided by Theorem~\ref{th:Exist}. Finally, observe that this explicit solution is positive in $\partial B_R(0)$, in agreement with Theorem~\ref{th:Trace} and Corollary~\ref{cor:Trace}.
\end{example}

Finally, we show that the non-integrability of $g$ in Theorem~\ref{th:Trace} is a necessary assumption. In Section~\ref{sec:trivial_solutions}, we provide several problems that admit $u\equiv 0$ as a solution when $g$ is integrable (that is, when it satisfies~\eqref{eq:hyp_g_int}) and $h(0)<+\infty$. With the example below, we show that non-trivial zero boundary trace solutions also exist when $g$ is integrable near the origin and $h$ is singular.

\begin{example}[Existence of zero trace solutions when $g$ is integrable]\label{ex:2}

As in Example~\ref{ex:1}, we look for a radially decreasing solution to~\eqref{eq:PbExample1}. We now consider $\theta<1$; note that $g(s)=s^{-\theta}$ satisfies~\eqref{eq:hyp_g_int}. In this case, equation~\eqref{eq:PbExample2} can be rewritten as
\begin{equation}
\label{eq:PbExample3}
\frac{N-1}{|x|} = \frac{\lambda}{1-\theta} \left| \left(\rho(|x|)^{1-\theta} \right)' \right| + \frac{f(x)}{\rho(|x|)^\gamma}.
\end{equation}
By considering the function
\[
f(x)= \big(R-|x|\big)^{\frac{\gamma}{1-\theta}} \left[ \frac{N-1}{|x|} - \frac{\lambda}{1-\theta} \right] \in L^{N,\infty}(B_R(0)),
\]
which is positive in $B_R(0)$ when $\lambda\leq (1-\theta)\frac{N-1}{R}$, one can easily verify that
\[
u(x)=\rho(|x|) = (R-|x|)^\frac{1}{1-\theta}
\]
solves~\eqref{eq:PbExample3}. Consequently, $u$ is a bounded solution to~\eqref{eq:PbExample1} such that $u(x)=0$ for all $x\in \partial\Omega$.
\end{example}

\section{Triviality and non-triviality when the zero-order term is non-singular}\label{sec:trivial_solutions}

As emphasized throughout this paper, the local integrability of $h(u)f$ in Definition~\ref{defsol} ensures that all solutions to~\eqref{eq:PbMain} are strictly positive in $\Omega$ provided $h(0)=+\infty$. In the present section, we investigate whether solutions are trivial (i.e., identically zero) or not when $h(0)<+\infty$. As we show, this depends entirely on the size of $f$: when $f$ is small, problem~\eqref{eq:PbMain} admits only trivial  bounded  solutions, whereas when $f$ is large, any solution is non-trivial. Between these thresholds, we prove the existence of a ``transition regime'', where trivial and non-trivial solutions can coexist.

For the sake of clarity, we first state the results of this section using the $W^{-1,\infty}(\Omega)$-norm  for data functions $f$  and then, when possible, we reformulate them in terms of the $L^{N,\infty}(\Omega)$-norm. Recall that $W^{-1,\infty}(\Omega)$ is defined as the dual space of $W_0^{1,1}(\Omega)$ and is endowed with the norm
\begin{equation*}
\|f\|_{W^{-1,\infty}(\Omega)} = \sup \left\{ \int_\Omega f \phi \;:\; \phi \in W_0^{1,1}(\Omega),\ \int_\Omega |\nabla \phi| \le 1 \right\},\ 
\end{equation*}
  for any function  $f\in W^{-1,\infty}(\Omega)$. 
It should be noted that the space $L^{N,\infty}(\Omega)$ is strictly contained in $W^{-1,\infty}(\Omega)$. 

To show the sharpness of our results, we provide several examples that require computing the norms of some functions. Specifically, we recall that $\|1\|_{W^{-1,\infty}(B_1)} = \frac{1}{N}$ and $\|1\|_{L^{N,\infty}(B_1)} = \omega_N^{1/N}$, as well as $\big\| |x|^{-1} \big\|_{W^{-1,\infty}(B_1)} = \frac{1}{N-1}$ and $\big\| |x|^{-1} \big\|_{L^{N,\infty}(B_1)} = \omega_N^{1/N}$, where $B_1$ denotes the unit ball and $\omega_N$ its volume.

The following general theorem provides two distinct thresholds for the triviality and non-triviality of solutions depending on the norm in $W^{-1,\infty}(\Omega)$ of $f$. We stress that the results contained in this section remain valid if $g\equiv 0$ (see Remark~\ref{rem:Dual_g0} below), thus extending and completing previous related results in~\cite{CiTr, DecGOP, Kaw, MeSeTr}.

\begin{theorem}\label{th:Dual} 
Let $0<f\in W^{-1,\infty}(\Omega)$. Assume that $g$ satisfies the integrability condition~\eqref{eq:hyp_g_int} and suppose that $h(0)<+\infty$. Let $u\in\bvo$ be a solution to~\eqref{eq:PbMain}. Then the following holds:
\begin{enumerate}[i)]
    \item if $\|f\|_{W^{-1,\infty}(\Omega)} < \frac{1}{\sup_{s\geq 0} h(s)}$ and $u\in \lio$, then $u\equiv 0$ a.e. in $\Omega$,
    \item if $\|f\|_{W^{-1,\infty}(\Omega)} > \frac{1}{h(0)}$, then $u$ is non-trivial.
\end{enumerate}
Furthermore, if $\|f\|_{W^{-1,\infty}(\Omega)} \leq \frac{1}{h(0)}$, then $u\equiv 0$ is a solution to~\eqref{eq:PbMain}.
\end{theorem}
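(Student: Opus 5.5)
The plan is to treat the three assertions separately. The last claim and \emph{ii)} are pure duality statements about the vector field $z$. The triviality claim \emph{i)} needs a level-set argument, because the reaction gradient term $g(u)|Du|$ rules out a direct energy test.

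\textbf{Step 1: the trivial solution and \emph{ii)}.} I would first record the identity $\|f\|_{W^{-1,\infty}(\Omega)}=\min\{\|F\|_{L^\infty(\Omega)^N}: -\operatorname{div}F=f \text{ in } \mathcal D'(\Omega)\}$. It follows from Hahn--Banach: extend the functional $\nabla\phi\mapsto\int_\Omega f\phi$ from the subspace $\{\nabla \phi:\phi\in W_0^{1,1}(\Omega)\}\subset L^1(\Omega)^N$ to all of $L^1(\Omega)^N$ without increasing its norm, and represent the extension by $F\in L^\infty(\Omega)^N$. The minimum is attained.

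For the last claim, if $\|f\|_{W^{-1,\infty}(\Omega)}\le 1/h(0)$, I take $z:=h(0)F$. Then $\|z\|_\infty\le 1$ and $-\operatorname{div}z=h(0)f=g(0)|D0|+h(0)f$. Lemma~\ref{lem:Global_z} gives $z\in\DM(\Omega)$. The conditions \eqref{eq:def_pair} and \eqref{eq:def_bord} hold trivially for $u\equiv0$, and $\Gamma(0)$ is finite by \eqref{eq:hyp_g_int}.

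Conversely, for \emph{ii)}, if $u\equiv0$ were a solution, then \eqref{eq:def_dist} reads $-\operatorname{div}z=h(0)f$. Testing with $\phi\in W_0^{1,1}(\Omega)$ gives $h(0)\int_\Omega f\phi=\int_\Omega z\cdot\nabla\phi\le\int_\Omega|\nabla\phi|$, so $\|f\|_{W^{-1,\infty}(\Omega)}\le 1/h(0)$, a contradiction.

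\textbf{Step 2: level-set identity for \emph{i)}.} Let $M=\|u\|_{L^\infty(\Omega)}$, $E_t=\{u>t\}$, and
\[
P(t)=\int_\Omega|D\chi_{E_t}|+\mathcal H^{N-1}(\partial\Omega\cap\{u>t\}),
\]
which is the $BV$ norm of $\chi_{E_t}$. Fix $t>0$ and test \eqref{eq:weak_form} with $S_\varepsilon(u)=\min\{1,(u-t)^+/\varepsilon\}$. By Remark~\ref{rem:Composition_nondecr}, $(z,DS_\varepsilon(u))=|DS_\varepsilon(u)|$, and by \eqref{eq:def_bord}, $S_\varepsilon(u)[z,\nu]=-S_\varepsilon(u)$ on $\partial\Omega$. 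This gives
\[
\|S_\varepsilon(u)\|_{BV(\Omega)}=\int_\Omega g(u)S_\varepsilon(u)|Du|+\int_\Omega h(u)S_\varepsilon(u)f .
\]
Since $\|f\|_{W^{-1,\infty}}$ also controls $\int f\phi$ by $\|\phi\|_{BV(\Omega)}$ for $\phi\in BV(\Omega)$ (by strict approximation with traces, or by extending by zero to a larger domain), the last term is at most $c\,\|S_\varepsilon(u)\|_{BV(\Omega)}$ with $c:=\sup h\,\|f\|_{W^{-1,\infty}(\Omega)}<1$. Hence
\[
(1-c)\,\|S_\varepsilon(u)\|_{BV(\Omega)}\le\Big(\max_{[t,M]}g\Big)\,|Du|(E_t).
\]
Letting $\varepsilon\to0$, using lower semicontinuity of the $BV$ norm and the coarea formula, I expect for a.e.\ $t\in(0,M)$
\[
(1-c)P(t)\le K_t\,Q(t),\qquad Q(t):=\int_t^M\int_\Omega|D\chi_{E_s}|\,ds\le\int_t^M P(s)\,ds ,
\]
where $K_t=\max_{[t,M]}g$ is finite for $t>0$.

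\textbf{Step 3: Gronwall and conclusion.} Fix $t_0>0$. On $[t_0,M]$ the previous step gives $-Q'\le P\le K Q/(1-c)$ with $Q(M)=0$. A backward Gronwall argument yields $Q\equiv0$, hence $P(t)=0$ for a.e.\ $t\ge t_0$, and $t_0$ is arbitrary. Now $P(t)=0$ forces $E_t$ to be null: if $E_t=\Omega$ up to null sets, the boundary term would equal $\mathcal H^{N-1}(\partial\Omega)>0$. Therefore $u\le t$ a.e.\ for every $t>0$, so $u\equiv0$.

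The main obstacle is Step 2. The gradient term sits on the reaction side, so no single test function cancels it. The cancellation has to come from the smallness of $|Du|(E_t)$ near the top level, which is exactly where the assumption $u\in L^\infty(\Omega)$ is used. The other delicate points are justifying the passage $\varepsilon\to0$, including the boundary portion of $P(t)$, and the $BV$ (rather than $W_0^{1,1}$) duality bound for $f$.
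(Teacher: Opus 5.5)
Your proposal is correct. Step~1 coincides with the paper: the trivial solution comes from $f=-\operatorname{div}F$ with $\|F\|_{L^\infty(\Omega)^N}=\|f\|_{W^{-1,\infty}(\Omega)}$, and \emph{ii)} comes from testing $-\operatorname{div}z=h(0)f$ against $W_0^{1,1}(\Omega)$.

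For \emph{i)} you take a different route. The paper uses the single test function $\psi(u)=e^{\lambda\Gamma(u)}-e^{\lambda\Gamma(0)}$ with $\lambda>1$. Monotonicity and $\psi(0)=0$ turn the left-hand side into $\|\psi(u)\|_{BV(\Omega)}$. The chain rule gives $g(u)e^{\lambda\Gamma(u)}|Du|=\frac{1}{\lambda}|D\psi(u)|$, so the gradient term is absorbed and $\big(\frac{\lambda-1}{\lambda}-c\big)\|\psi(u)\|_{BV(\Omega)}\le 0$; taking $\lambda$ large concludes. Your remark that no single test function handles the gradient term is therefore off: the term cannot be cancelled, but it can be absorbed.

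Your level-set argument is essentially the disintegrated version of this. The weight $e^{\lambda\Gamma}$ is replaced by an explicit Gronwall step in the level variable. The details hold up, provided $E_t$ is read as $\{u^*>t\}$ when it is measured by $|Du|$. Since $D^ju=0$, you get $|Du|(\{u^*>t\})=|D(u-t)^+|(\Omega)=\int_t^M\int_\Omega|D\chi_{E_s}|\,ds$. Your inequality then holds for every $t>0$, not only a.e. The integral form $Q(t)\le\kappa\int_t^M Q(s)\,ds$ gives $Q\equiv0$ on $[t_0,M]$ without differentiating $Q$.

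The two routes trade off as follows.
\begin{itemize}
\item The paper's route is shorter and needs only Lemma~\ref{lem:ChainRule}. It uses \eqref{eq:hyp_g_int} together with $u\in\lio$ to make $\psi(u)$ a bounded $BV$ function vanishing exactly at $0$.
\item Your route costs the coarea formula and the lower semicontinuity of the $BV$ norm including the boundary term. In return it needs only $\max_{[t,M]}g<+\infty$ and $c<1$, with no large parameter. So your proof of item \emph{i)} does not actually use \eqref{eq:hyp_g_int}.
\end{itemize}
Both proofs rely equally on extending the $W^{-1,\infty}$ duality bound to $BV(\Omega)$ test functions; the paper also uses this without comment.
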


\begin{remark}
The request $u \in L^\infty(\Omega)$ in the first item of Theorem~\ref{th:Dual} is actually necessary. In fact, unbounded solutions exist when the datum is small and sufficiently integrable, as shown in \cite{AbDaSe} for the case $g, h\equiv 1$. Moreover, the assumption that $g$ satisfies the integrability condition \eqref{eq:hyp_g_int} is not restrictive in the context of Theorem \ref{th:Dual}. Indeed, since any solution in the sense of Definition~\ref{defsol} satisfies $\Gamma(u)\in BV_{\rm loc}(\Omega)$, if~\eqref{eq:hyp_g_int} fails to hold, then any solution must be positive in $\Omega$ because $\Gamma(0)=-\infty$ (in fact \eqref{eq:prov19} holds).
\end{remark}

\begin{proof}[Proof of Theorem~\ref{th:Dual}]

We first prove item $i)$. Following the ideas of~\cite[Theorem~2]{DaSe}, let $\lambda > 1$, and let us take $\psi(u) = e^{\lambda \Gamma(u)} - e^{\lambda \Gamma(0)}$ as test function in~\eqref{eq:PbMain}. Note that this function belongs to $BV(\Omega) \cap L^\infty(\Omega)$ by the chain rule, since $\Gamma(u) \in \bvo \cap \lio$. Indeed, $\Gamma(u)$ is bounded because $u\in\lio$ and~\eqref{eq:hyp_g_int} holds, and $D\Gamma(u) \in \mathcal{M}(\Omega)^N$ holds due to~\eqref{eq:def_dist}. Therefore, applying Green's identity~\eqref{eq:Green}, one has
\begin{equation*}
    \int_\Omega (z, D\psi(u)) - \int_{\partial\Omega} \psi(u) [z, \nu] \ \dH = \int_\Omega g(u) \psi(u) |Du| + \int_\Omega h(u) f \psi(u).
\end{equation*}
Since $s\mapsto \psi(s)$ is an increasing function, Lemma~\ref{lem:Composition} implies that $(z,D\psi(u))=|D\psi(u)|$. Moreover, from the boundary condition \eqref{eq:def_bord} and the fact that $\psi(0)=0$, it follows that $-\psi(u) [z, \nu] = \psi(u)$ on $\partial\Omega$. Thus, we obtain
\begin{equation}
\label{eq:Pf_Dual}
\int_\Omega |D\psi(u)| + \int_{\partial\Omega} \psi(u) \ \dH = \int_\Omega g(u) (e^{\lambda \Gamma(u)} - e^{\lambda \Gamma(0)}) |Du| + \int_\Omega h(u) f \psi(u).
\end{equation}
By the chain rule (Lemma~\ref{lem:ChainRule}), we have $g(u) e^{\lambda \Gamma(u)}|Du| = \frac{1}{\lambda} |D\psi(u)|$. Taking this into account and dropping a term in~\eqref{eq:Pf_Dual}, it follows
\begin{equation*}
    \frac{\lambda - 1}{\lambda} \int_\Omega |D\psi(u)| + \int_{\partial\Omega} \psi(u) \ \dH \le \int_\Omega h(u) f \psi (u).
\end{equation*}
Finally, observing that
\begin{equation*}
    \int_\Omega h(u) f \psi(u) \le \sup_{s\geq 0} h(s) \|f\|_{W^{-1,\infty}(\Omega)} \|\psi(u)\|_{BV(\Omega)},
\end{equation*}
one obtains
\begin{equation*}
    \left( \frac{\lambda - 1}{\lambda} -  \sup_{s\geq 0} h(s) \|f\|_{W^{-1,\infty}(\Omega)} \right) \|\psi(u)\|_{BV(\Omega)} \le 0.
\end{equation*}
Since we have assumed that $\|f\|_{W^{-1,\infty}(\Omega)} < \left(\sup_{s\geq 0}h(s) \right)^{-1}$, we can choose $\lambda > 1$ sufficiently large so that the coefficient in parentheses is strictly positive. This implies that $\|\psi(u)\|_{BV(\Omega)} = 0$ and, since $\psi(s)$ only vanishes at $s=0$, we conclude that $u \equiv 0$ a.e. in $\Omega$.

We now show item \textit{ii)}. We proceed by contradiction, assuming that $u \equiv 0$ a.e. in $\Omega$. 
In this case, $u$ satisfies (see~\eqref{eq:def_dist})
\begin{equation}\label{eq:Pf_Dual_2}
    -\operatorname{div} z = h(0)f \text{ as measures in } \Omega.
\end{equation}
Then, for any $\phi \in W_0^{1,1}(\Omega)$, it follows from~\eqref{eq:Pf_Dual_2} that
\begin{equation*}
    \int_\Omega h(0)f\phi   = \int_\Omega z \cdot \nabla \phi.
\end{equation*}
In particular, since  $|z|\le 1$ in $\Omega$, one obtains
\begin{equation*}
    \|h(0)f\|_{W^{-1,\infty}(\Omega)} \le 1, 
\end{equation*}
which contradicts our assumption. Therefore, $u$ cannot be identically zero a.e. in $\Omega$.

Finally, we prove that $u\equiv 0$ is a solution to~\eqref{eq:PbMain} if $\|f\|_{W^{-1,\infty}(\Omega)} \leq \frac{1}{h(0)}$. Since the boundary condition~\eqref{eq:def_bord} is satisfied and~\eqref{eq:def_pair} holds for any vector field $z$, we only need to find a vector field $z$ that verifies~\eqref{eq:def_dist} for $u\equiv 0$. Because $f\in W^{-1,\infty}(\Omega)$, it can be represented as $f=-\operatorname{div}w$, where $w\in\lio^N$ satisfies $\|w\|_{\lio^N} = \|f\|_{W^{-1,\infty}(\Omega)}$. Defining $z:= h(0)w$, it follows that $\|z\|_{\lio^N} = h(0) \|f\|_{W^{-1,\infty}(\Omega)} \leq 1$ and
\[
-\operatorname{div} z = h(0)f.
\]
This shows that $u\equiv 0$ is a solution to~\eqref{eq:PbMain}.
\end{proof}
\begin{remark}
\label{rem:Dual_g0}
The results of this section also cover the case $g\equiv 0$, although item \textit{i)} then requires a different test function: with our normalization one has $\Gamma\equiv 0$, so that $\psi\equiv 0$ and the above argument becomes vacuous. In this case, however, one may simply take $\psi(u)=u\in\bvo\cap\lio$ as test function in~\eqref{eq:weak_form}. Since the gradient term is absent, reasoning exactly as above yields
\[
\int_\Omega |Du| + \int_{\partial\Omega} u \ \dH = \int_\Omega h(u) f u \leq \sup_{s\geq 0} h(s) \|f\|_{W^{-1,\infty}(\Omega)} \|u\|_{BV(\Omega)},
\]
whence $\left( 1 - \sup_{s\geq 0} h(s) \|f\|_{W^{-1,\infty}(\Omega)} \right) \|u\|_{BV(\Omega)} \leq 0$, and $u\equiv 0$ follows under the same smallness assumption. Item \textit{ii)} and the last statement of Theorem~\ref{th:Dual} do not involve $\psi$, so they hold verbatim.
\end{remark}

The different cases in Theorem~\ref{th:Dual}, which depend on the value of $\|f\|_{W^{-1,\infty}(\Omega)}$, are illustrated in Figure~\ref{fig:dual}. We identify a triviality range when $\|f\|_{W^{-1,\infty}(\Omega)}$ is strictly smaller than $\frac{1}{\sup_{s\geq 0} h(s)}$, below which $u\equiv 0$ is the unique bounded solution, and a non-triviality threshold when $\|f\|_{W^{-1,\infty}(\Omega)}$ is strictly larger than $\frac{1}{h(0)}$, above which any solution is non-trivial.

\begin{center}
	\begin{tikzpicture}[
		scale=1,
		every node/.style={font=\small},
		brace/.style={decorate, decoration={brace, amplitude=6pt}, thick}
		]
		
			\draw[->, thick] (0,0) -- (9.5,0) node[right] {$\|f\|_{W^{-1,\infty}(\Omega)}$};

		\draw[thick] (0,0.2) -- (0,-0.2) node[below=0.15cm] {$0$};
		\draw[thick] (3,0.2) -- (3,-0.2) node[below=0.15cm] {$\displaystyle\frac{1}{\sup h}$};
		\draw[thick] (6.5,0.2) -- (6.5,-0.2) node[below=0.15cm] {$\displaystyle\frac{1}{h(0)}$};
		
		\draw[brace] (0,0.3) -- (2.9,0.3) node[midway, above=0.35cm, align=center] {$u\equiv 0$};
		\draw[brace] (3.1,0.3) -- (6.4,0.3) node[midway, above=0.35cm, align=center] {transition regime};
		\draw[brace] (6.6,0.3) -- (9.4,0.3) node[midway, above=0.35cm, align=center] {$u \not \equiv 0$};
	\end{tikzpicture}	
	\captionof{figure}{Different regimes in terms of $\|f\|_{W^{-1,\infty}(\Omega)}$ according to Theorem~\ref{th:Dual}.}
	\label{fig:dual}
\end{center}

\medskip 

Between the triviality and non-triviality thresholds, there is a gap that we refer to as the ``transition regime''. When the norm of $f$ lies in this interval, $u\equiv 0$ is always a solution. Nevertheless, non-trivial bounded solutions may also exist, as shown in the following example.

\begin{example}\label{ex:transition}
Let $\Omega=B_1$ be the unit ball, let $h$ be such that $0<h(0)<\sup_{s\geq 0}h(s)$ and let $f\equiv \kappa N$ with
\[ 
\kappa\in\left(\frac{1}{\sup_{s\geq 0} h(s)},\ \frac{1}{h(0)}\right).
\]
In particular, since $f$ is constant, it holds $\|f\|_{W^{-1,\infty}(B_1)}=\frac{f}{N}=\kappa$. Then problem~\eqref{eq:PbMain} admits as solutions:
\begin{itemize}
    \item[(i)] \emph{The trivial solution}, which is guaranteed by Theorem~\ref{th:Dual}. In this specific example, the vector field $z$ can be found explicitly: note that $z(x)=-h(0)\kappa\,x$ is admissible because $\|z\|_{L^\infty(B_1)^N}=h(0)\kappa\le 1$ and solves $-\operatorname{div}z=h(0)f=h(0)\kappa N$.
    
    \item[(ii)] \emph{A non-trivial solution.}  We look for a constant solution $u\equiv\beta>0$. Taking $z(x)=-x$, which is admissible since $\|z\|_{L^\infty(B_1)^N}=1$, the boundary condition $[z,\nu]=-1$ on $\partial B_1$ is satisfied and the equation $-\operatorname{div}z=h(u)f$ becomes $N=h(\beta)\kappa N$, that is, $h(\beta)=\kappa^{-1}$. Therefore, any positive constant $\beta$ satisfying $h(\beta) = \kappa^{-1}$ is a solution to~\eqref{eq:PbMain}. Since $h(0) \leq \kappa^{-1} < \sup_{s\geq 0}h(s)$, the existence of such a $\beta>0$ follows from the intermediate value theorem.
\end{itemize}
Finally, we stress that if the supremum $\sup_{s\geq 0} h(s)$ is attained, the same procedure yields a non-trivial solution for $\kappa =\frac{1}{\sup_{s\geq 0} h(s)}$.
\end{example}

The above Theorem~\ref{th:Dual} can be suitably rephrased in terms of the $L^{N,\infty}(\Omega)$-norm simply by taking into account the relationship between both norms. On the one hand, by applying the Sobolev embedding $W_0^{1,1}(\Omega) \hookrightarrow L^{\frac{N}{N-1},1}(\Omega)$ directly in the definition of $\|f\|_{W^{-1,\infty}(\Omega)}$, one obtains
\[
\|f\|_{W^{-1,\infty}(\Omega)} \leq \tilde{\mathcal{S}}_1\|f\|_{L^{N,\infty}(\Omega)} = \frac{1}{(N-1) \omega_N^{1/N}}\|f\|_{L^{N,\infty}(\Omega)},\ \forall f\in L^{N,\infty}(\Omega). 
\]
On the other hand, when $\Omega=B_R$ is a ball, the spaces $L^{N,\infty}(\Omega)$ and $W^{-1,\infty}(\Omega)$ coincide within the class of non-negative, radially  non-increasing functions. In this specific case, one has
\[
\|f\|_{W^{-1,\infty}(\Omega)} \geq \frac{N-1}{N}\tilde{\mathcal{S}}_1\|f\|_{L^{N,\infty}(\Omega)} = \frac{1}{N \omega_N^{1/N}}\|f\|_{L^{N,\infty}(\Omega)},\ \forall 0\leq f\in L^{N,\infty}(B_R) \text{ radially decreasing}.  
\]
Taking these inequalities into account, one obtains the following corollary from Theorem~\ref{th:Dual}.

\begin{corollary}
\label{cor:weakLp}
Let $0<f\in L^{N,\infty}(\Omega)$. Assume that $g$ satisfies the integrability condition~\eqref{eq:hyp_g_int} and suppose that $h(0)<+\infty$. Let $u\in\bvo$ be a solution to~\eqref{eq:PbMain}. Then the following holds:
\begin{enumerate}[i)]
    \item if $\|f\|_{L^{N,\infty}(\Omega)} < \frac{1}{\tilde{ \mathcal{S}}_1\sup_{s\geq 0} h(s)}$ and $u\in \lio$, then $u\equiv 0$ a.e. in $\Omega$,
    \item if $\|f\|_{L^{N,\infty}(\Omega)} > \frac{N}{N-1}\frac{1}{\tilde{ \mathcal{S}}_1 h(0)}$, $\Omega$ is a ball and $f$ is radially decreasing, then $u$ is non-trivial.
\end{enumerate}
Furthermore, if $\|f\|_{L^{N,\infty}(\Omega)} \leq \frac{1}{\tilde{ \mathcal{S}}_1 h(0)}$, then $u\equiv 0$ is a solution to~\eqref{eq:PbMain}.
\end{corollary}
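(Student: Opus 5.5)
The corollary is a direct translation of Theorem~\ref{th:Dual} into the $L^{N,\infty}(\Omega)$ setting, using the two norm comparisons stated just before it. Since $L^{N,\infty}(\Omega)\subset W^{-1,\infty}(\Omega)$, every $0<f\in L^{N,\infty}(\Omega)$ is an admissible datum for Theorem~\ref{th:Dual}. So the plan is to verify, for each item, that the $L^{N,\infty}$ hypothesis implies the matching $W^{-1,\infty}$ hypothesis, and then apply the theorem verbatim.

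\emph{Item i).} The embedding $W_0^{1,1}(\Omega)\hookrightarrow L^{\frac{N}{N-1},1}(\Omega)$ together with the Lorentz H\"older inequality gives $\|f\|_{W^{-1,\infty}(\Omega)}\le \tilde{\mathcal S}_1\|f\|_{L^{N,\infty}(\Omega)}$. The hypothesis $\|f\|_{L^{N,\infty}(\Omega)}<(\tilde{\mathcal S}_1\sup h)^{-1}$ therefore yields $\|f\|_{W^{-1,\infty}(\Omega)}<(\sup h)^{-1}$. Since $u\in L^\infty(\Omega)$, Theorem~\ref{th:Dual}\,i) gives $u\equiv0$. The last assertion is handled the same way: $\|f\|_{L^{N,\infty}}\le(\tilde{\mathcal S}_1h(0))^{-1}$ implies $\|f\|_{W^{-1,\infty}}\le h(0)^{-1}$, so $u\equiv0$ is a solution by the final part of Theorem~\ref{th:Dual}.

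\emph{Item ii).} Here $\Omega$ is a ball and $f$ is radially decreasing, so the reverse comparison $\|f\|_{W^{-1,\infty}}\ge \frac{N-1}{N}\tilde{\mathcal S}_1\|f\|_{L^{N,\infty}}$ applies. Combined with $\|f\|_{L^{N,\infty}}>\frac{N}{N-1}(\tilde{\mathcal S}_1h(0))^{-1}$, it gives $\|f\|_{W^{-1,\infty}}>h(0)^{-1}$, and Theorem~\ref{th:Dual}\,ii) yields non-triviality. This is the only step needing real care: the reverse inequality is quoted rather than proved. If it had to be justified, I would argue on the radial profile. Pick $r$ and set $\phi=\chi_{B_r}$, approximated in $W^{1,1}_0$ (its BV norm being the perimeter $N\omega_N r^{N-1}$). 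For radially decreasing $f$, $f_*(t)$ at $t=|B_r|$ equals $f$ on $\partial B_r$, and $\int_{B_r}f\ge |B_r|f_*(|B_r|)$. Hence
\[
\|f\|_{W^{-1,\infty}}\ge \frac{|B_r|\,f_*(|B_r|)}{N\omega_N r^{N-1}}=\frac{|B_r|^{1/N}f_*(|B_r|)}{N\omega_N^{1/N}}.
\]
Taking the supremum over $r$ gives $(N\omega_N^{1/N})^{-1}\|f\|_{L^{N,\infty}}$, which is the constant stated. Everything else is bookkeeping of constants.
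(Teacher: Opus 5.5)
Your proposal is correct and follows the paper's argument exactly: the corollary is read off from Theorem~\ref{th:Dual} using $\|f\|_{W^{-1,\infty}(\Omega)}\le\tilde{\mathcal S}_1\|f\|_{L^{N,\infty}(\Omega)}$ for items i) and the final assertion, and, on a ball with radially decreasing $f$, the reverse bound with constant $\frac{N-1}{N}\tilde{\mathcal S}_1$ for item ii). The paper only quotes that reverse inequality. Your justification of it is valid: test with Lipschitz approximations of $\chi_{B_r}$ for concentric balls $B_r$, and use $\int_{B_r}f=\int_0^{|B_r|}f_*\ge|B_r|\,f_*(|B_r|)$.
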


\begin{remark}
If $\sup_{s\geq 0} h(s) = h(\infty)$, the existence threshold given in Theorem \ref{th:Exist} coincides with the triviality range of Corollary~\ref{cor:weakLp}. In this scenario, the bounded solution provided by Theorem \ref{th:Exist} is identically zero. Nevertheless, the above corollary shows that when the interval $\left(\frac{N}{N-1}\frac{1}{h(0)}, \frac{1}{h(\infty)}\right)$ is non-empty, one can find data and domains for which the solution given by Theorem \ref{th:Exist} is non-trivial.
\end{remark}

The different thresholds appearing in Corollary~\ref{cor:weakLp} are illustrated in Figure~\ref{fig:lorentz}. Note that the range for non-triviality applies only when $\Omega$ is a ball and $f$ is radially decreasing. As before (see Figure~\ref{fig:dual}), below the triviality threshold, $u\equiv 0$ is the unique bounded solution, whereas within the non-triviality range, any solution is non-trivial. Furthermore, in the ``transition regime'' $u\equiv 0$ is always a solution, but it can coexist with non-trivial bounded solutions.

\begin{center}
    \begin{tikzpicture}[
        scale=1,
        every node/.style={font=\small},
        brace/.style={decorate, decoration={brace, amplitude=6pt}, thick}
        ]
        
        \draw[->, thick] (0,0) -- (10,0) node[right] {$\tilde{\mathcal{S}}_1\|f\|_{L^{N,\infty}(B_R)}$};

        \draw[thick] (0,0.2) -- (0,-0.2) node[below=0.15cm] {$0$};
        \draw[thick] (2.5,0.2) -- (2.5,-0.2) node[below=0.15cm] {$\displaystyle\frac{1}{\sup h}$};
        \draw[thick] (5,0.2) -- (5,-0.2) node[below=0.15cm] {$\displaystyle\frac{1}{h(0)}$};
        \draw[thick] (7.5,0.2) -- (7.5,-0.2) node[below=0.15cm] {$\dfrac{N}{N-1}\dfrac{1}{h(0)}$};
        
        \draw[brace] (0,0.3) -- (2.4,0.3) node[midway, above=0.35cm, align=center] {$u\equiv 0$};
        \draw[brace] (2.6,0.3) -- (4.9,0.3) node[midway, above=0.35cm, align=center] {transition regime};
        \draw[brace] (5.1,0.3) -- (7.4,0.3) node[midway, above=0.35cm, align=center] { undecided};
        \draw[brace] (7.6,0.3) -- (9.9,0.3) node[midway, above=0.35cm, align=center] { $u\not\equiv 0$};
    \end{tikzpicture}	
        \captionof{figure}{Different regimes in terms of $\tilde{\mathcal{S}}_1\|f\|_{L^{N,\infty}(\Omega)}$ according to Corollary~\ref{cor:weakLp} when $\Omega$ is a ball and $f$ is  radially decreasing.}
    \label{fig:lorentz}
\end{center}

The main difference in this setting is the appearance of an ``undecided'' region. In this region, any scenario may occur, strongly depending on the value of $\|f\|_{W^{-1,\infty}(\Omega)}$. The main difference with respect to the ``transition regime'' is that $u\equiv 0$ is not always a solution, although for some data $u\equiv 0$ remains the unique bounded solution. The following example illustrates these scenarios for different data $f$ sharing the same value of $\tilde{\mathcal{S}}_1\|f\|_{L^{N,\infty}(\Omega)}$.

\begin{example}
Let $\Omega=B_1$ be the unit ball, let $h$ be such that $h(0)>0$ and let
\[
\lambda\in\left(\frac{1}{h(0)},\ \frac{N}{N-1}\frac{1}{h(0)}\right).
\]
In the following, we present two functions $f_1$ and $f_2$ such that $\tilde{\mathcal{S}}_1\|f_1\|_{L^{N,\infty}(B_1)} = \tilde{\mathcal{S}}_1\|f_2\|_{L^{N,\infty}(B_1)} = \lambda$, where problem~\eqref{eq:PbMain} admits the trivial solution for $f_1$, whereas it admits only non-trivial solutions for $f_2$.
\begin{itemize}
    \item[(i)] Let $f_1 \equiv \lambda(N-1)$. In this case, $\tilde{\mathcal{S}}_1\|f_1\|_{L^{N,\infty}(B_1)} = \lambda$ and $\|f_1\|_{W^{-1,\infty}(B_1)} = \lambda\frac{N-1}{N} < \frac{1}{h(0)}$. By Theorem~\ref{th:Dual}, $u\equiv 0$ is a solution to~\eqref{eq:PbMain}. Indeed, an admissible vector field is given by $z(x)=-\frac{\lambda(N-1)h(0)}{N}x$.
    
    \item[(ii)] Let $f_2(x) = \lambda \frac{N-1}{|x|}$. Here, $\tilde{\mathcal{S}}_1\|f_2\|_{L^{N,\infty}(B_1)} = \lambda$ and $\|f_2\|_{W^{-1,\infty}(B_1)} = \lambda > \frac{1}{h(0)}$. By Theorem~\ref{th:Dual}, any bounded solution to~\eqref{eq:PbMain} is non-trivial. In fact, such a solution exists if there is some $\beta>0$ such that $h(\beta) = \lambda^{-1}$: it suffices to consider $z(x)=-\frac{x}{|x|}$ and $u\equiv \beta$.
\end{itemize}
\end{example}

Next, we provide an example dealing with the ``transition regime'' in the $L^{N,\infty}(\Omega)$ setting. In this scenario, Example~\ref{ex:transition} does not apply, as the quantity $\tilde{\mathcal{S}}_1\|f\|_{L^{N,\infty}(\Omega)}$ falls outside the transition range (see Figure~\ref{fig:lorentz}). The example is as follows.

\begin{example}\label{ex6}\label{ex:transition_Lorentz}
Let $\Omega=B_1$ be the unit ball, let $h$ be such that $h(0)<\sup_{s\geq 0}h(s)$ and let $f(x)=\lambda\frac{N-1}{|x|}$ with
\[ 
\lambda\in\left(\frac{1}{\sup_{s\geq 0} h(s)},\ \frac{1}{h(0)}\right).
\]
Note that $\tilde{\mathcal{S}}_1\|f\|_{L^{N,\infty}(B_1)} = \frac{1}{(N-1)\omega_N^{1/N}} \|f\|_{L^{N,\infty}(B_1)} =\lambda$. Then problem~\eqref{eq:PbMain} admits as solutions:
\begin{itemize}
    \item[(i)] \emph{The trivial solution}, which is guaranteed by Corollary~\ref{cor:weakLp}. Indeed, an admissible vector field $z$ can be found explicitly: $z(x)=-h(0)\lambda \frac{x}{|x|}$ satisfies $\|z\|_{L^\infty(B_1)^N}=h(0)\lambda\leq 1$ and solves $-\operatorname{div}z=h(0)f$.

    \item[(ii)] \emph{A non-trivial solution.} We look for a constant solution $u\equiv\beta>0$. Taking $z(x)=-\frac{x}{|x|}$, which is admissible since $\|z\|_{L^\infty(B_1)^N}=1$, the boundary condition $[z,\nu]=-1$ on $\partial B_1$ is satisfied and the equation $-\operatorname{div}z=h(u)f$ becomes $\frac{N-1}{|x|}=h(\beta)\lambda \frac{N-1}{|x|}$, that is, $h(\beta)=\lambda^{-1}$. Therefore, any positive constant $\beta$ satisfying $h(\beta) = \lambda^{-1}$ is a solution to~\eqref{eq:PbMain}. Since $h(0) \leq \lambda^{-1} < \sup_{s\geq 0}h(s)$, the existence of such a $\beta>0$ follows from the intermediate value theorem.
\end{itemize}
Finally, we stress that if the supremum $\sup_{s\geq 0} h(s)$ is attained, the same construction yields a non-trivial solution for $\lambda =\frac{1}{\sup_{s\geq 0} h(s)}$.
\end{example}

The solutions in the examples above are constant. We discuss the existence of such solutions in the remark below.

\begin{remark}
The non-trivial solutions exhibited in Examples~\ref{ex:transition}--\ref{ex:transition_Lorentz} are constant, but this is by no means the typical situation: it is a consequence of a geometric property of the ball, and it fails on general domains. To make this precise, recall that the \textit{Cheeger constant} of a bounded Lipschitz domain $\Omega$ is
\begin{equation*}
\lambda_\Omega := \inf \left\{ \frac{P(F)}{|F|} \;:\; F\subseteq \Omega,\ |F|>0,\ P(F)<+\infty \right\},
\end{equation*}
where $P(F)$ denotes the perimeter in $\R^N$ of (the extension by zero of) $F$; a minimizer is called a \textit{Cheeger set} of $\Omega$, and $\Omega$ is said to be \textit{self-Cheeger} if it is a Cheeger set of itself, i.e. if $\lambda_\Omega = \frac{P(\Omega)}{|\Omega|}$.

Let now $\Omega$ be any bounded Lipschitz domain, let $f\equiv c>0$, and assume that $u\equiv\beta>0$ is a solution to~\eqref{eq:PbMain}. Since $|Du|=0$, the term involving $g$ vanishes and $-\operatorname{div}z=h(\beta)c=:\eta$ is a constant, while $u>0$ on $\partial\Omega$ forces $[z,\nu]=-1$ by~\eqref{eq:def_bord}. Testing the Green formula~\eqref{eq:Green} with $v\equiv 1$ gives
\[
\eta|\Omega|=\int_\Omega v\,(-\operatorname{div}z)=-\int_{\partial\Omega}[z,\nu]\ \dH=P(\Omega),
\]
whereas testing it with $v=\chi_F$, where $F\subseteq\Omega$ has finite perimeter and $|F|>0$, and using $|(z,Dv)|\le \|z\|_{L^\infty(\Omega)^N}|Dv|\le |Dv|$ together with $[z,\nu]=-1$ on $\partial\Omega$, gives
\[
\eta|F|\;\le\;|D\chi_F|(\Omega)+\int_{\partial\Omega}\chi_F\ \dH = P(F),
\]
the right-hand side being the total variation in $\R^N$ of the extension of $\chi_F$ by zero. Taking the infimum over such $F$ we obtain $\eta\le \lambda_\Omega\le\frac{P(\Omega)}{|\Omega|}=\eta$, so that necessarily
\[
h(\beta)\,c=\lambda_\Omega=\frac{P(\Omega)}{|\Omega|},
\]
i.e. $\Omega$ must be self-Cheeger. If this is not the case (for instance, if $\Omega$ is a rectangle), then \emph{no} positive constant solution exists in $\Omega$ for a constant datum, whatever $h$ and $c$ are, and hence every
non-trivial solution is non-constant.  This already happens in the simplest situation: if $g\equiv0$, $h(s)=s^{-\gamma}$ and $f\equiv1$, the solution to~\eqref{eq:PbMain} on a convex domain is $u=(-H_\Omega)^{-1/\gamma}$, where $H_\Omega$ denotes the variational mean curvature of $\Omega$, which is constant on the Cheeger set of $\Omega$ and blows up at the corners, so that
the solution $u$ vanishes there (see~\cite[Example~2]{GOP}). Non-constant solutions also appear in Examples~\ref{ex:1} and~\ref{ex:2} above.

On the other hand, the ball plays no special role beyond being calibrable, and the coexistence phenomenon of Example~\ref{ex:transition} carries over to any calibrable domain, up to replacing the constant $N$ by the Cheeger constant $\lambda_\Omega$. Precisely, assume that $\Omega$ admits a \textit{calibration}, i.e. a vector field $z_\Omega\in\DM(\Omega)$ such that
\[
\|z_\Omega\|_{L^\infty(\Omega)^N}\le1, \qquad -\operatorname{div}z_\Omega=\lambda_\Omega \ \text{ in } \Omega, \qquad [z_\Omega,\nu]=-1 \ \text{ on } \partial\Omega,
\]
which in particular forces $\Omega$ to be self-Cheeger. Let $h$ be such that $0<h(0)<\sup_{s\geq0}h(s)$ and let
\[
\kappa\in\left(\frac{1}{\sup_{s\geq 0}h(s)},\ \frac{1}{h(0)}\right).
\]
Then the datum $f\equiv \lambda_\Omega\kappa$ satisfies $\|f\|_{W^{-1,\infty}(\Omega)}=\frac{\lambda_\Omega \kappa}{\lambda_\Omega}=\kappa$, since $\|c\|_{W^{-1,\infty}(\Omega)}=c\,\lambda_\Omega^{-1}$ for every constant $c>0$, and both
\begin{itemize}
    \item[(i)] $u\equiv 0$, realized by $z=h(0)\kappa\, z_\Omega$, which is admissible as $\|z\|_{L^\infty(\Omega)^N}\le h(0)\kappa\le1$, and
    \item[(ii)] $u\equiv\beta$ with $h(\beta)=\kappa^{-1}$, realized by $z=z_\Omega$,
\end{itemize}
are solutions to~\eqref{eq:PbMain}. For $\Omega=B_R$ one has $\lambda_{B_R}=\frac{N}{R}$ and $z_\Omega(x)=-\frac{x}{R}$, which gives back Example~\ref{ex:transition} for $R=1$.
\end{remark}

\appendix

\section{Existence of bounded solution for a problem with no singularities}
\label{sec:Appendix}

In this appendix, we prove the existence of a solution $u\in \bvo\cap \lio$ to the approximated problems~\eqref{eq:PbApprox}. Although this can be inferred from~\cite{GOP}, we prefer to provide a proof here to make this work self-contained. Our demonstration, which follows the approach of~\cite{GOP}, considerably simplifies some parts, mainly because we deal with non-singular coefficients.

The problem we study here is
\begin{equation}
	\label{eq:PbAppendix}
	\begin{cases}
		-\Delta_1 u = g(u)|Du| + h(u)f & \text{in }\Omega,\\
		u=0 & \text{on } \partial\Omega,
	\end{cases}
\end{equation}
where $g,h\in C(\R)$ with $g>0$ and $f\in L^{N,\infty}(\Omega)$.  In~\cite{GOP}, the authors are only interested in studying the case where $f\geq 0$, obtaining the existence of a non-negative solution provided that $h\geq 0$. For completeness, we also address the case where $f$ and $h$ may change sign.

Since the zero-order term of~\eqref{eq:PbAppendix} can change sign, Definition~\ref{defsol} must be reformulated for sign-changing solutions, as the pairing $(z,Du)$ is not well defined in that setting. Nevertheless, we circumvent this problem by restricting ourselves to bounded solutions, where $(z,Du)$ is well defined. The notion of solution for problem~\eqref{eq:PbAppendix} is as follows.

\begin{defin}\label{defsolA}
Let $f\in L^1(\Omega)$. A function $u\in BV(\Omega)\cap L^{\infty}(\Omega)$ is a \textit{bounded solution} to problem~\eqref{eq:PbAppendix} if $D^j u=0$ and if there exists $z\in \mathcal{D}\mathcal{M}^\infty(\Omega)$ with $\|z\|_{L^\infty(\Omega)^N}\le 1$ such that
\begin{gather} 
    \label{eq:def_distA}
    -\operatorname{div}z = g(u)|Du| + h(u)f \text{ as measures in } \Omega,\\
    \label{eq:def_pairA}
    (z,Du)=|Du| \text{ as measures in } \Omega,\\
    \label{eq:def_bordA}
    |u(x)|+u(x)[z,\nu](x) =0 \text{ for  $\mathcal{H}^{N-1}$-a.e. } x \in \partial\Omega.		
\end{gather}
\end{defin}

\begin{remark}
Any solution in the sense of Definition~\ref{defsolA} satisfies the weak formulation
\begin{equation}
\label{eq:app_weak_formA}
\into (z, D\phi) - \intdo \phi [z,\nu] \ \dH = \into g(u) \phi |Du| + \into h(u) \phi f, \ \forall \phi \in \bvo\cap\lio.
\end{equation}
This follows from Green's identity~\eqref{eq:Green} applied to~\eqref{eq:def_distA} after taking $\phi\in\bvo\cap\lio$ as test function.
\end{remark}

We show the existence of a bounded solution to~\eqref{eq:PbAppendix} under elementary assumptions. First, we assume that $h$ remains bounded as $|s|$ diverges, that is,
\begin{equation}
\label{eq:ap_hyp_inf}
h(\infty) := \limsup_{s\to \pm \infty} |h(s)| <+ \infty.
\end{equation}
We also require a smallness condition in the norm of $f$, namely that $\|f\|_{L^{N,\infty}(\Omega)} < (\tilde{\mathcal{S}}_1 h(\infty))^{-1}$. Under these assumptions, there exists some constant $M>0$ such that
\begin{equation}
\label{eq:ap_M}
\tilde{\mathcal{S}}_1^{-1} - \|f\|_{L^{N,\infty}(\Omega)} \sup_{|s|> M} |h(s)|>0.
\end{equation}

The existence result is the following.

\begin{theorem}
\label{th:Appendix}
Let $f\in L^{N,\infty}(\Omega)$ and let $g,h\in C(\R)$ with $g> 0$. Assume that $h$ verifies~\eqref{eq:ap_hyp_inf}. If $\|f\|_{L^{N,\infty}(\Omega)} < (\tilde{\mathcal{S}}_1 h(\infty))^{-1}$, then~\eqref{eq:PbAppendix} has a solution $u\in BV(\Omega)\cap \lio$ in the sense of Definition \ref{defsolA} such that
\[
\|u\|_\lio \leq M,
\]
where $M>0$ is any constant satisfying~\eqref{eq:ap_M}. Furthermore, if both $f$ and $h$ are non-negative, then $u\geq 0$.
\end{theorem}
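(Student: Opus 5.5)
We will obtain the solution as the limit, as $p\to1^+$, of solutions to $p$-Laplacian problems, the standard route of \cite{AnBaCaMa, GOP}. Since $g,h$ are continuous but possibly unbounded, we first truncate. Fix $M$ as in \eqref{eq:ap_M} and replace $g,h$ by $g(T_M(s))$, $h(T_M(s))$ for $|s|\le M$. Outside $[-M,M]$ we keep $h$ (it is bounded there by \eqref{eq:ap_hyp_inf}) and keep $g$ frozen at $g(\pm M)$. The truncated $g$ is then continuous, bounded and positive. At the end we will show $\|u\|_\lio\le M$, so the truncation is invisible.

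For $p>1$ we solve
\[
-\Delta_p u_p = g(u_p)|\nabla u_p|^p + h(u_p)f \ \text{in } \Omega,\qquad u_p\in W_0^{1,p}(\Omega).
\]
To handle the natural-growth term we use the classical exponential change of unknown. With $\Gamma(s)=\int_0^s g$ and $v=\int_0^{u}e^{\Gamma(t)/(p-1)}\,dt$, the equation becomes one without gradient term and with bounded coefficients, so existence follows from \cite{BMP1, FeMu}.

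The first uniform estimate is an $L^\infty$ bound. We test with $e^{\Gamma(u_p)}G_M(u_p)$ (and with the analogous negative part) to absorb the gradient term, exactly as in the proof of Theorem~\ref{th:Dual}. Using Young's inequality, the Sobolev--Lorentz embedding \eqref{eq:emb2} and $\sup_{|s|>M}|h|$, we get
\[
\Big(\tilde{\mathcal S}_1^{-1}-\|f\|_{L^{N,\infty}}\sup_{|s|>M}|h(s)|\Big)\|G_M(u_p)\|_{W^{1,1}_0}\le o(1)
\]
as $p\to1$. This should yield $\|u_p\|_\lio\le M+o(1)$. We expect this step to be the main obstacle: the constants coming from $p\to1$ and from the weight $e^{\Gamma}$ must not spoil the strict inequality \eqref{eq:ap_M}. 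Our first attempt is to exploit $e^{\Gamma(u_p)}\ge1$ on $\{u_p>M\}$ and to work with $\int|\nabla G_M(u_p)|^p$ and $\int|\nabla u_p|^p\ge p\int|\nabla u_p|-(p-1)|\Omega|$.

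With $u_p$ bounded, testing with $u_p$ gives $\int|\nabla u_p|^p\le C$. Hence $u_p\to u$ in $L^1$, with $u\in\bvo\cap\lio$, and $|\nabla u_p|^{p-2}\nabla u_p\rightharpoonup z$ weakly in $L^q$ for every $q$, with $\|z\|_\infty\le1$.

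Passing to the limit then follows the pattern of Steps 2--7 in the proof of Theorem~\ref{th:Exist}. Lower semicontinuity gives $-\operatorname{div}z\ge |D\Gamma(u)|+h(u)f$. Testing with $e^{\Gamma(u_p)}\varphi$ gives the reverse inequality $-\operatorname{div}(e^{\Gamma(u)}z)\le h(u)fe^{\Gamma(u)}$; here $e^{\Gamma}$ is bounded, so no $S_\delta$ cutoff is needed. The chain \eqref{eq:prov9} and Lemma~\ref{lem:no_jump} then give $D^ju=0$ and \eqref{eq:def_pairA}, and \eqref{eq:def_distA} follows since $e^{\Gamma(u)}>0$. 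The boundary condition \eqref{eq:def_bordA} follows as in Step 7, now testing with $e^{\Gamma(u_p)}$ times nonnegative and nonpositive parts of $u_p$. Finally, if $f,h\ge0$, testing with $u_p^-$ gives $u_p\ge0$, and hence $u\ge0$.
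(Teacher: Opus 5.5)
There is a genuine gap, and it is exactly the step you flag: the uniform $L^\infty$ bound and the sharp bound $\|u\|_{L^\infty(\Omega)}\le M$, which is the real content of the statement. With your truncation ($g$ frozen at $g(\pm M)>0$ for $|s|>M$), the natural-growth term stays active at every level above $M$. After cancelling the gradient terms, the test $e^{\Gamma(u_p)}G_M(u_p)$ only gives $\int_\Omega e^{\Gamma(u_p)}|\nabla G_M(u_p)|^p=\int_\Omega h(u_p)f\,e^{\Gamma(u_p)}G_M(u_p)$, where $e^{\Gamma(u_p)}=e^{\Gamma(M)}e^{g(M)G_M(u_p)}$ on $\{u_p>M\}$. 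If you bound the weight below by $1$ on the left, the exponentially growing weight remains on the right and cannot be controlled by $\tilde{\mathcal S}_1\|f\|_{L^{N,\infty}(\Omega)}\sup_{|s|>M}|h(s)|\int_\Omega|\nabla G_M(u_p)|$. If you keep it, the left side controls (formally at $p=1$) the total variation of $e^{g(M)w}$, with $w=G_M(u_p)$. The right side, however, contains $w\,e^{g(M)w}$, which is not dominated by any multiple of $e^{g(M)w}-1$. So your displayed inequality does not follow. This is also not the mechanism of Theorem~\ref{th:Dual}, which needs $e^{\lambda\Gamma}$ with $\lambda>1$ to produce the factor $(\lambda-1)/\lambda$. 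Even if you repaired this, e.g. with $e^{\eta p G_M(u_p)}-1$ and $\eta\gg g(M)$, you would get $G_M(u)=0$ for the limit, but no uniform bound on $\|u_p\|_{L^\infty(\Omega)}$. You use that bound repeatedly: for the energy estimate, for the boundedness of $e^{\Gamma(u_p)}$ and $h(u_p)$ when passing to the limit, and in the boundary step. Two further steps are wrong as stated. First, testing with $u_p$ does not give $\int_\Omega|\nabla u_p|^p\le C$, because $\int_\Omega g(u_p)u_p|\nabla u_p|^p$ on the right cannot be absorbed. One needs an exponential test such as $(e^{\eta p|u_p|}-1)\operatorname{sign}(u_p)$ with $\eta$ large compared with $\max_{|s|\le M}g$. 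Second, with frozen $g$ the change of unknown does not produce bounded coefficients. Since $e^{\Gamma(u)}\approx\big(\frac{g(M)}{p-1}v\big)^{p-1}$ for large $v$, the equation $-\Delta_p v=e^{\Gamma(u)}h(u)f$ has resonant growth $v^{p-1}$.

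The missing idea is a different truncation. The paper takes $g_p=g$ on $[-M,M]$, decreasing linearly to $0$ on $M<|s|<M+p-1$, and $g_p\equiv0$ for $|s|\ge M+p-1$. Then $g_p$ has compact support, which is what makes the fixed-$p$ existence result applicable. Crucially, for $k\ge M+p-1$ the test $G_k(u_p)$ sees no gradient term: $\int_\Omega|\nabla G_k(u_p)|^p=\int_\Omega h(u_p)fG_k(u_p)\le\|f\|_{L^{N,\infty}(\Omega)}\tilde{\mathcal S}_1\sup_{|s|>M}|h(s)|\int_\Omega|\nabla G_k(u_p)|$. Together with Young's inequality this gives $\int_\Omega|\nabla G_k(u_p)|\le\frac{p-1}{p}|\{|u_p|>k\}|\big(1-\|f\|_{L^{N,\infty}(\Omega)}\tilde{\mathcal S}_1\sup_{|s|>M}|h(s)|\big)^{-1}$, which is exactly where \eqref{eq:ap_M} enters. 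The first power of $|\{|u_p|>k\}|$ against $|\{|u_p|>l\}|^{(N-1)/N}$ yields Stampacchia's condition and hence a uniform $L^\infty$ bound. Taking $k=M+p-1$ and letting $p\to1^+$ gives $G_M(u)=0$. The transition layer has width $p-1$ and disappears in the limit, so only $g$ on $[-M,M]$ survives. Once these bounds are in place, your limiting steps follow the paper (Steps 2--6 of Theorem~\ref{th:Exist}, with bounded $e^{\Gamma}$). The boundary condition also works provided you keep the high powers $m\to\infty$ of Step 7; a single factor $u_p^\pm$ does not force $[z,\nu]=-1$.
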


The strategy to prove this result is to approximate problem~\eqref{eq:PbAppendix} by a family of problems involving the $p$-Laplacian, and then let $p\to 1^+$. With this aim, for $1<p<2$ we consider the problems
\begin{equation}
	\label{eq:Pb_pLap}
	\begin{cases}
		-\Delta_p u_p = g_p(u_p)|\nabla u_p|^p + h(u_p)f & \text{in }\Omega,\\
		u_p=0 & \text{on } \partial\Omega,
	\end{cases}
\end{equation}
where $g_p$ is a suitable truncation of $g$. More precisely, we fix $M>0$ such that~\eqref{eq:ap_M} holds, and we define $g_p\colon \R \to \R$ as the continuous function
\begin{equation*}
g_p(s) := \begin{cases}
    g(s) & \text{ if }\ |s|\leq M,\\
    -\frac{g(\mathrm{sign}(s)M)}{p-1} (|s|-M-p+1) & \text{ if }\ M < |s| < M+p-1, \\
    0 & \text{ if }\ |s|> M+p-1.
\end{cases}
\end{equation*}

The existence of a solution $u_p\in \sob\cap\lio$ to~\eqref{eq:Pb_pLap} is guaranteed by~\cite[Theorem~2.1]{PoSe}. This existence result applies since all its structural assumptions hold in our framework, bearing in mind, in particular, that $\lim_{s\to\pm\infty} g_p(s)=0$, $h$ is bounded, and $f\in L^{N,\infty}(\Omega)\subset L^m(\Omega)$ for some $m\in (N/p, N)$. 

In the following, we prove, among other properties, that $u_p$ is bounded in $\bvo$. In the proof, we use the elementary inequality
\begin{equation}
\label{eq:ap_elemental}
e^{ps} -1 \leq C_p (e^s-1)^p,\ \forall s\geq 1, \ \forall p\in (1,2),
\end{equation}
where $C_p>0$ is such that $C_p\to 1^+$ as $p\to 1^+$ (indeed, one may take $C_p = (e^p-1)(e-1)^{-p}$).

\begin{lemma}
\label{lem:Appendix_BV}
Under the assumptions of Theorem~\ref{th:Appendix}, the sequence $u_p$ of solutions to~\eqref{eq:Pb_pLap} is bounded in $\bvo$. Furthermore, there exists some $u\in BV(\Omega)$ and some subsequence of $u_p$, not relabelled, such that
\begin{gather*}
    u_p\to u \text{ in } L^q(\Omega),\ \forall q\in[1,+\infty),\\
    \nabla u_p \rightharpoonup Du\ *\text{-weakly as measures in }\Omega.
\end{gather*}
\end{lemma}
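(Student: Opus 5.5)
We will derive the $BV$ bound from an $L^\infty$ bound and a $W_0^{1,p}$ estimate, in the spirit of the exponential change of unknowns of~\cite{GOP}. The point of the truncation $g_p$ is that $g_p$ is integrable on $\R$, so $\Gamma_p(s):=\int_0^s g_p(t)\,dt$ is bounded and Lipschitz. Our first step is to remove the gradient term with the test function $\varphi=(e^{\Gamma_p(u_p)/(p-1)}-1)\,\mathrm{sign}$-type combinations, or more concretely $\phi=e^{\frac{\Gamma_p(u_p)}{p-1}}G_k(u_p)$ (and the analogous one for the negative part). The term $|\nabla u_p|^p g_p(u_p)\frac{1}{p-1}e^{\Gamma_p/(p-1)}G_k$ produced by differentiating the exponential cancels the gradient reaction term. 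We then obtain
\[
\int_\Omega e^{\frac{\Gamma_p(u_p)}{p-1}}|\nabla G_k(u_p)|^p\le \int_\Omega |h(u_p)f|\,e^{\frac{\Gamma_p(u_p)}{p-1}}|G_k(u_p)|.
\]
For $k\ge M+p-1$ the weight is constant on $\{|u_p|>k\}$, since $g_p=0$ there. It cancels on both sides, and we are left with $\int|\nabla G_k(u_p)|^p\le \sup_{|s|>M}|h|\int |f||G_k(u_p)|$.

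The second step is to turn this into $G_k(u_p)\equiv0$ for $p$ close to $1$. We use Young's inequality $\int|\nabla v|\le \frac1p\int|\nabla v|^p+\frac{p-1}{p}|\{v\neq0\}|$, Hölder in Lorentz spaces, and \eqref{eq:emb2}. This gives
\[
\Big(\tilde{\mathcal S}_1^{-1}-\|f\|_{L^{N,\infty}}\sup_{|s|>M}|h|\Big)\|G_k(u_p)\|_{W^{1,1}}\lesssim (p-1)|\{|u_p|>k\}|.
\]
A Stampacchia-type argument in $k$, using \eqref{eq:ap_M} and the decay of $|\{|u_p|>k\}|$, then yields $\|u_p\|_\infty\le M+\varepsilon_p$ with $\varepsilon_p\to0$. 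I expect this to be the main obstacle: the right-hand side carries the factor $(p-1)$ and the level-set measure, and the iteration must be closed uniformly in $p$. Here the elementary inequality \eqref{eq:ap_elemental} with $C_p\to1$ should be what controls the exponential weights when passing from $p$-powers to first powers.

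The third step is the $BV$ bound itself. With $u_p$ bounded in $L^\infty$, we test \eqref{eq:Pb_pLap} with $e^{\Gamma_p(u_p)/(p-1)}-1$ (for the positive part; symmetrically for the negative part). The gradient term again cancels, and the result is $\int|\nabla(e^{\Gamma_p(u_p)/p}-1)|^p\cdot C\le \|h\|_{L^\infty(-M-1,M+1)}\int|f|\,C'$. Since $\Gamma_p$ is bi-Lipschitz on $[-M-1,M+1]$ (because $g>0$ there), this bounds $\int|\nabla u_p|^p$ uniformly. Young's inequality then gives $\int|\nabla u_p|\le\frac1p\int|\nabla u_p|^p+|\Omega|$, so $u_p$ is bounded in $W_0^{1,1}$, hence in $BV(\Omega)$. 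Compactness of $BV\hookrightarrow L^1$ gives a subsequence with $u_p\to u$ in $L^1$. The uniform $L^\infty$ bound together with interpolation gives convergence in every $L^q$, $q<\infty$. Finally, the bound on $\int|\nabla u_p|$ gives $\nabla u_p\rightharpoonup Du$ *-weakly as measures, where the limit is identified via distributional convergence.
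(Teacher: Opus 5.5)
The gap is in your Step 3, which is where the uniform $W^{1,p}$ bound, and hence the lemma, is supposed to come from.

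First, the multiplier $e^{\Gamma_p(u_p)/(p-1)}$ does not cancel the gradient term. The factor $\frac{1}{p-1}$ belongs to the change of unknown $v=\int_0^{u}e^{\Gamma_p(t)/(p-1)}\,dt$. In a test function, the weight that cancels $g_p(u_p)|\nabla u_p|^p$ exactly is $e^{\Gamma_p(u_p)}$. With your choice, the constant $C'$ on the right is of order $e^{\Gamma_p(M)/(p-1)}=e^{\frac{1}{p-1}\int_0^M g}$, which blows up as $p\to1^+$. The weight on the left is only bounded below by $1$, so nothing uniform follows. Also, $|\nabla(e^{\Gamma_p(u_p)/p}-1)|^p$ does not arise from this test.

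Second, and more fundamentally, even with the correct weight $e^{\Gamma_p(u_p^+)}-1$ you only get a uniform bound on $\int_\Omega g_p(u_p)|\nabla u_p|^p$. The claim ``$\Gamma_p$ is bi-Lipschitz on $[-M-1,M+1]$ because $g>0$'' is false. Indeed $\Gamma_p'=g_p$, not $g$, and $g_p$ vanishes identically for $|s|\ge M+p-1$ and decays linearly to $0$ on $M<|s|<M+p-1$. So this step does not control $\nabla u_p$ on $\{|u_p|>M\}$, and your bound on $\int_\Omega|\nabla u_p|^p$ is unproved.

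The rest of your route is viable and genuinely different from the paper's. Steps 1--2 are sound and are essentially the paper's subsequent $L^\infty$ lemma, which does not depend on the $BV$ bound. For $k\ge M+p-1$ the term $g_p(u_p)|\nabla u_p|^pG_k(u_p)$ vanishes identically, so you can test with $G_k(u_p)$ directly; the exponential weight is superfluous. Stampacchia then closes uniformly in $p$: the factor $p-1$ only helps and gives $\|u_p\|_{L^\infty(\Omega)}\le M+O(p-1)$, and \eqref{eq:ap_elemental} plays no role.

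Once $\|u_p\|_{L^\infty(\Omega)}\le M+1$, Step 3 is repaired by testing with $(e^{\eta|u_p|}-1)\operatorname{sign}(u_p)$. This uses only the upper bound $0\le g_p\le\max_{|s|\le M}g$:
\[
\eta\int_\Omega e^{\eta|u_p|}|\nabla u_p|^p\le\max_{|s|\le M}g\int_\Omega e^{\eta|u_p|}|\nabla u_p|^p+\max_{|s|\le M+1}|h|\,e^{\eta(M+1)}\|f\|_{L^1(\Omega)}.
\]
Any $\eta>\max_{|s|\le M}g$ then gives $\int_\Omega|\nabla u_p|^p\le C$, and your Young/compactness/interpolation conclusion goes through.

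This is essentially the paper's test function. The paper, however, uses it before having any $L^\infty$ information. That forces it to control $\int_{\{|u_p|>M\}}(e^{\eta p|u_p|}-1)|f|$ through \eqref{eq:ap_elemental}, the Lorentz--H\"older inequality and the embedding $W_0^{1,p}(\Omega)\hookrightarrow L^{p^*,p}(\Omega)$, with $C_p\to1$ and $\tilde{\mathcal S}_p\to\tilde{\mathcal S}_1$. Your ordering (with the repaired Step 3) avoids these $p$-dependent constants altogether.
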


\begin{proof}
Let $\eta \geq 1$. Taking $\left(e^{\eta p |u_p|}-1\right) \mathrm{sign}(u_p) \in \sob\cap\lio$ as test function in~\eqref{eq:Pb_pLap}, we deduce that
\begin{equation}
\label{eq:Pf_Append_1}
\eta p \into e^{\eta p |u_p|} |\nabla u_p|^p \leq \into g_p(u_p) \left(e^{\eta p |u_p|}-1 \right) |\nabla u_p|^p + \into |h(u_p)| \left(e^{\eta p |u_p|}-1 \right) |f|.
\end{equation}
With respect to the second integral in~\eqref{eq:Pf_Append_1}, recalling that $g_p(s)=0$ when $|s|\geq M+p-1$, we have
\begin{equation}
\label{eq:Pf_Append_2}
\into g_p(u_p) \left(e^{\eta p |u_p|}-1 \right) |\nabla u_p|^p = \int_{\{|u_p|\leq M+p-1\}} g_p(u_p) \left(e^{\eta p |u_p|}-1 \right) |\nabla u_p|^p \leq \max_{|s|\leq M} g(s) \into e^{\eta p |u_p|} |\nabla u_p|^p.
\end{equation}
Regarding the third integral in~\eqref{eq:Pf_Append_1}, we use~\eqref{eq:ap_hyp_inf} and that $p<2$ to obtain
\begin{equation}
\label{eq:Pf_Append_3}
\into |h(u_p)| \left(e^{\eta p |u_p|}-1 \right) |f| \leq \max_{|s|\leq M} |h(s)| e^{2M \eta} \|f\|_{L^1(\Omega)} + \sup_{|s|> M} |h(s)| \int_{\{|u_p|>M\}}\left(e^{\eta p |u_p|}-1 \right) |f|.
\end{equation}
Now, we focus our attention on the last term of~\eqref{eq:Pf_Append_3}. First, since we may assume $M>1$, we can apply the inequality~\eqref{eq:ap_elemental}. Then, we use H\"older inequalities in Lorentz spaces and, finally, the Sobolev embedding. In this way, we deduce
\begin{equation}
\label{eq:Pf_Append_4}
\begin{split}
\int_{\{|u_p|>M\}} \left(e^{\eta p |u_p|}-1 \right) |f| 
&\leq C_p \int_{\{|u_p|>M\}} \left(e^{\eta |u_p|}-1 \right)^p |f| 
\leq C_p \|f\|_{L^{N,\infty}(\Omega)} \left\|\left(e^{\eta |u_p|}-1 \right)^p\right\|_{L^{\frac{N}{N-1},1}(\Omega)}  \\
&\leq C_p \|f\|_{L^{N,\infty}(\Omega)} |\Omega|^\frac{p-1}{N} \left\|\left(e^{\eta |u_p|}-1 \right)^p\right\|_{L^{\frac{N}{N-p},1}(\Omega)}\\
&= C_p \|f\|_{L^{N,\infty}(\Omega)} |\Omega|^\frac{p-1}{N} \left\|e^{\eta |u_p|}-1 \right\|_{L^{\frac{Np}{N-p},p}(\Omega)}^p\\
& \leq C_p \|f\|_{L^{N,\infty}(\Omega)} |\Omega|^\frac{p-1}{N} \tilde{\mathcal{S}}_p^p \eta^p \into e^{\eta p |u_p|} |\nabla u_p|^p,
\end{split}
\end{equation}
where $\tilde{\mathcal{S}}_p$ is the best constant of the Sobolev embedding $\sob \hookrightarrow L^{p^*,p}(\Omega)$, see~\cite{Lorentz}. Combining the estimates \eqref{eq:Pf_Append_1}, \eqref{eq:Pf_Append_2}, \eqref{eq:Pf_Append_3} and~\eqref{eq:Pf_Append_4}, and rearranging some terms, we obtain
\begin{equation}
\label{eq:Pf_Append_5}
\left( \eta p - C_p \|f\|_{L^{N,\infty}(\Omega)} |\Omega|^\frac{p-1}{N} \tilde{\mathcal{S}}_p^p \eta^p \sup_{|s|> M} |h(s)| - \max_{|s|\leq M} g(s) \right) \into e^{\eta p |u_p|} |\nabla u_p|^p \leq \max_{|s|\leq M} |h(s)| e^{2M \eta} \|f\|_{L^1(\Omega)}.
\end{equation}

The constant that multiplies the integral in~\eqref{eq:Pf_Append_5} is positive if $\eta$ is large and $p$ is close to 1. To see this, take $\varepsilon>0$ with $\varepsilon < 1 - \|f\|_{L^{N,\infty}(\Omega)} \tilde{\mathcal{S}}_1 \sup_{|s|> M} |h(s)|$ (recall that~\eqref{eq:ap_M} holds). We fix $\eta\geq 1$ large such that
\begin{equation}
\label{eq:Pf_Append_6}
\varepsilon\eta - \max_{|s|\leq M} g(s) > 0.
\end{equation}
Since $C_p\to 1$ and $\tilde{\mathcal{S}}_p \to \tilde{\mathcal{S}}_1$ as $p\to 1^+$ (see~\cite{Lorentz}), then
\[
\lim_{p\to 1^+} \left( p - C_p \|f\|_{L^{N,\infty}(\Omega)} |\Omega|^\frac{p-1}{N} \tilde{\mathcal{S}}_p^p \eta^{p-1} \sup_{|s|> M} |h(s)|\right) = 1- \|f\|_{L^{N,\infty}(\Omega)} \tilde{\mathcal{S}}_1 \sup_{|s|> M} |h(s)|,
\]
so there exists some $p_0\in(1,2)$ such that
\begin{equation}
\label{eq:Pf_Append_7}
p - C_p \|f\|_{L^{N,\infty}(\Omega)} |\Omega|^\frac{p-1}{N} \tilde{\mathcal{S}}_p^p \eta^{p-1} \sup_{|s|> M} |h(s)| \geq \varepsilon,\ \forall p\in (1,p_0).
\end{equation}

Therefore, using~\eqref{eq:Pf_Append_7} and~\eqref{eq:Pf_Append_6}, we deduce from~\eqref{eq:Pf_Append_5} that
\begin{equation}
\label{eq:Pf_Append_7bis}
\into e^{\eta p |u_p|} |\nabla u_p|^p \leq C,\ \forall p\in (1,p_0),
\end{equation}
for some $C>0$. Finally, using the Young inequality, we obtain \begin{equation}
\label{eq:Pf_Append_8}
\into \left|\nabla \left( e^{\eta |u_p|}-1 \right) \right| 
\leq \frac{\eta^p}{p} \into e^{\eta p |u_p|} |\nabla u_p|^p + \frac{p-1}{p}|\Omega| \leq C,\ \forall p\in (1,p_0),
\end{equation}
where $C>0$. As a consequence of~\eqref{eq:Pf_Append_8}, the sequence $u_p$ is bounded in $BV(\Omega)$. Indeed, one has
\[
\eta \into |\nabla u_p| \leq \eta\into e^{\eta |u_p|}|\nabla u_p| = \into \left|\nabla \left( e^{\eta |u_p|}-1 \right) \right| 
\leq C,\ \forall p\in (1,p_0).
\]
In this way, there exists some $u\in\bvo$ and some subsequence of $u_p$, not relabelled, such that $u_p \to u$ in $L^1(\Omega)$ and $\nabla u_p$ $*$-weakly converges to $Du$ as measures in $\Omega$. Besides, since $e^{\eta |u_p|}-1$ is bounded in $L^1(\Omega)$, the Lebesgue Theorem implies that $u_p \to u$ in $L^q(\Omega)$ for any $q<+\infty$ as $p\to 1^+$.
\end{proof}

Next, we show that the sequence $u_p$ is bounded in $\lio$. Moreover, we give an explicit bound in $\lio$ for $u$.

\begin{lemma}
Under the assumptions of Theorem~\ref{th:Appendix}, the sequence $u_p$ of solutions to~\eqref{eq:Pb_pLap} is bounded in $\lio$. Furthermore, the limit function $u\in \bvo$ given by Lemma~\ref{lem:Appendix_BV} belongs to $\lio$ and satisfies
\[
\|u\|_\lio \leq M,
\]
where $M>0$ is given by~\eqref{eq:ap_M}.
\end{lemma}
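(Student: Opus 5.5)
The plan is to run a Stampacchia-type level-set argument on the $p$-problems~\eqref{eq:Pb_pLap}, with estimates uniform in $p$ near $1$, and then pass to the limit.

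\textbf{Step 1: level-set test function.} Fix $k>M$ and test~\eqref{eq:Pb_pLap} with $G_k(u_p)\in\sob\cap\lio$. On $\{|u_p|>k\}$ we have $|u_p|>M+p-1$ as soon as $p-1<k-M$. There $g_p(u_p)=0$, so the gradient term disappears completely. This is exactly why $g_p$ was truncated. The resulting identity is
\[
\into |\nabla G_k(u_p)|^p=\int_{\{|u_p|>k\}} h(u_p)f\,G_k(u_p)\le \sup_{|s|>M}|h(s)|\,\|f\|_{L^{N,\infty}(\Omega)}\,\|G_k(u_p)\|_{L^{\frac{N}{N-1},1}(\Omega)} .
\]

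\textbf{Step 2: absorbing the right-hand side.} Set $A_k=\{|u_p|>k\}$ and $w=G_k(u_p)$. The Lorentz norm is estimated by
\[
\|w\|_{L^{\frac{N}{N-1},1}}\le |A_k|^{\frac{p-1}{N}}\,\tilde{\mathcal S}_p\,\|\nabla w\|_{L^p}.
\]
This uses Hölder in Lorentz spaces on the support $A_k$, exactly as in~\eqref{eq:Pf_Append_4}. Combined with Step 1, this gives
\[
\|\nabla w\|_p^{p-1}\le \sup_{|s|>M}|h|\,\|f\|_{L^{N,\infty}}\,\tilde{\mathcal S}_p\,|\Omega|^{\frac{p-1}{N}}.
\]
By~\eqref{eq:ap_M}, the constant multiplying $\|\nabla w\|_p^{p-1}$ tends to a number strictly less than $1$ as $p\to1^+$. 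A cleaner route is to pass to $L^1$ directly. By Young's inequality,
\[
\into|\nabla w|\le \tfrac1p\into|\nabla w|^p+\tfrac{p-1}{p}|A_k| .
\]
The $p$-term is then estimated via Step 1 and the Lorentz–Sobolev inequality for $W^{1,1}$:
\[
\|w\|_{L^{\frac{N}{N-1},1}}\le \tilde{\mathcal S}_1\into|\nabla w| .
\]
Hence
\[
\Big(1-\tfrac1p\,\tilde{\mathcal S}_1\|f\|_{L^{N,\infty}}\sup_{|s|>M}|h|\Big)\into|\nabla G_k(u_p)|\le \tfrac{p-1}{p}\,|A_k|\le (p-1)|\Omega|.
\]
The bracket is positive by~\eqref{eq:ap_M}, so $\into|\nabla G_k(u_p)|\le C(p-1)$.

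\textbf{Step 3: uniform bound and the limit.} From Step 2 and the Sobolev embedding, $\|G_k(u_p)\|_{L^1}\le C(p-1)\to0$ for each fixed $k>M$. This immediately gives the limit bound. Since $u_p\to u$ in $L^1(\Omega)$ (Lemma~\ref{lem:Appendix_BV}) and $G_k$ is Lipschitz, we get $G_k(u)=0$, so $|u|\le k$ a.e. Letting $k\downarrow M$ yields $\|u\|_\lio\le M$.

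\textbf{Step 4: uniform $\lio$ bound on $u_p$.} For the $u_p$ themselves, I would iterate Step 2 in Stampacchia's manner. Using $|A_k|^{\frac{p-1}{N}}$ factors, or an $L^q$ bound on $G_k(u_p)$ for $q>\frac{N}{N-1}$ via Lorentz–Sobolev with exponent $p^*$, one obtains a decay inequality
\[
(h-k)|A_h|^{\alpha}\le C(p-1)^{\beta}|A_k|^{\gamma}\qquad\text{with }\gamma>\alpha .
\]
Stampacchia's lemma then gives $\|u_p\|_\infty\le M+1+o(1)$.

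\textbf{Expected obstacle.} The main difficulty is making the constants uniform as $p\to1$. This requires $\tilde{\mathcal S}_p\to\tilde{\mathcal S}_1$ together with a careful choice of exponents so that the Stampacchia iteration closes with a bound independent of $p$. If the iteration does not close cleanly, a fallback for the $u_p$ bound is to test with $(e^{\eta|u_p|}-1)$ restricted to $A_k$, mirroring~\eqref{eq:Pf_Append_5}.
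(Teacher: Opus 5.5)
Your proposal is essentially the paper's proof. You test~\eqref{eq:Pb_pLap} with $G_k(u_p)$ for $k\ge M+p-1$ so that the truncation of $g_p$ kills the gradient term, combine Young's inequality with the $\tilde{\mathcal{S}}_1$ Lorentz--Sobolev inequality and~\eqref{eq:ap_M} to get $\int_\Omega|\nabla G_k(u_p)|\le C(p-1)\,|\{|u_p|>k\}|$, and conclude with Stampacchia's lemma and the limit $p\to1^+$; your fixed $k>M$ followed by $k\downarrow M$ is a harmless variant of the paper's choice $k=M+p-1$.

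Your Step~4 needs none of the devices you list, and the feared dependence on $\tilde{\mathcal{S}}_p$ never enters: keeping $|A_k|$ on the right of Step~2 and using the $L^{\frac{N}{N-1}}$ Sobolev inequality with $|G_k(u_p)|\ge l-k$ on $A_l$ directly gives $|A_l|\le \big(C(p-1)\big)^{\frac{N}{N-1}}|A_k|^{\frac{N}{N-1}}(l-k)^{-\frac{N}{N-1}}$ for $l>k\ge M+1$. This is \eqref{eq:Pf_Append_13} with the factor $p-1$ retained. Incidentally, the Lorentz--H\"older factor in your discarded digression should be $|A_k|^{\frac{p-1}{p}}$; the exponent $\frac{p-1}{N}$ in~\eqref{eq:Pf_Append_4} refers to a $p$-th power.
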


\begin{proof}
 For $k>0$, we consider $G_k(s)$ defined as in \eqref{eq:def_Tk_Gk}. Given $k\geq M+p-1$, we take $G_k(u_p)\in W_0^{1,p}(\Omega)\cap L^\infty(\Omega)$ as test function in~\eqref{eq:Pb_pLap}, obtaining after using the H\"older and the Sobolev inequalities that
\begin{equation}
\label{eq:Pf_Append_9}
\begin{split}
    \int_\Omega |\nabla G_k(u_p)|^p &= \int_\Omega h(u_p) G_k(u_p) f \leq \sup_{|s|> M} |h(s)| \|f\|_{L^{N,\infty}(\Omega)} \|G_k(u_p)\|_{L^{\frac{N}{N-1},1}(\Omega)}\\
    & \leq \|f\|_{L^{N,\infty}(\Omega)} \tilde{\mathcal{S}}_1 \sup_{|s|> M} |h(s)| \int_\Omega |\nabla G_k(u_p)|.
\end{split}
\end{equation}
where we have taken into account that $g_p(s)=0$ when $|s|\geq M+p-1$. On the other hand, using Young's inequality, we deduce
\begin{align}
    \label{eq:Pf_Append_10}
    \int_\Omega |\nabla G_k(u_p)| \leq \int_\Omega |\nabla G_k(u_p)|^p + \frac{p-1}{p} |\{|u_p|>k\}|,
\end{align}
so~\eqref{eq:Pf_Append_9} and~\eqref{eq:Pf_Append_10} give
\begin{equation}
    \label{eq:Pf_Append_11}
    \int_\Omega |\nabla G_k(u_p)| \leq \|f\|_{L^{N,\infty}(\Omega)} \tilde{\mathcal{S}}_1 \sup_{|s|> M} |h(s)| \int_\Omega |\nabla G_k(u_p)| + \frac{p-1}{p} |\{|u_p|>k\}|.
\end{equation}
Since $M$ is such that~\eqref{eq:ap_M} holds, then one deduces from~\eqref{eq:Pf_Append_11} that
\begin{equation*}
    \int_\Omega |\nabla G_k(u_p)| \leq \frac{p-1}{p} \frac{|\{|u_p|>k\}|}{1-\|f\|_{L^{N,\infty}(\Omega)} \tilde{\mathcal{S}}_1 \sup_{|s|> M} |h(s)|},\ \forall k\geq M+p-1,
\end{equation*}
and, by the Sobolev inequality, we obtain
\begin{equation}
\label{eq:Pf_Append_12}
\|G_k(u_p)\|_{L^\frac{N}{N-1}(\Omega)} \leq  \frac{p-1}{p} \frac{ \mathcal{S}_1 |\{|u_p|>k\}|}{1-\|f\|_{L^{N,\infty}(\Omega)} \tilde{\mathcal{S}}_1 \sup_{|s|> M} |h(s)|},\ \forall k\geq M+p-1.
\end{equation}

In the following, we show that $u_p$ is uniformly bounded in $\lio$. Given $l>k$, we deduce from~\eqref{eq:Pf_Append_12} that
\[
|l-k| \, |\{|u_p|>l\}|^\frac{N-1}{N} \leq
\|G_k(u_p)\|_{L^\frac{N}{N-1}(\Omega)} \leq \frac{ \mathcal{S}_1 |\{|u_p|>k\}|}{1-\|f\|_{L^{N,\infty}(\Omega)} \tilde{\mathcal{S}}_1 \sup_{|s|> M} |h(s)|},\ \forall l>k\geq M+1,
\]
which is equivalent to
\begin{equation}
    \label{eq:Pf_Append_13}
     |\{|u_p|>l\}| \leq
    \left( \frac{\mathcal{S}_1}{1-\|f\|_{L^{N,\infty}(\Omega)} \tilde{\mathcal{S}}_1 \sup_{|s|> M} |h(s)|} \right)^\frac{N}{N-1} \frac{|\{|u_p|>k\}|^\frac{N}{N-1}}{|l-k|^\frac{N}{N-1}}, \ \forall l>k\geq M+1.
\end{equation}
Estimate~\eqref{eq:Pf_Append_13} allows us to apply standard Stampacchia's Lemma (\cite{Stamp}) to deduce that $u_p$ is uniformly bounded in $L^\infty(\Omega)$.

Now, observe that we can take $k=M+p-1$ in~\eqref{eq:Pf_Append_12} obtaining, for some $C>0$, that
\begin{equation}
\label{eq:Pf_Append_14}
\|G_{M+p-1}(u_p)\|_{L^\frac{N}{N-1}(\Omega)} \leq  \frac{p-1}{p} C.
\end{equation}
Since $G_{M+p-1}(u_p) \to G_M(u)$ in $L^\frac{N}{N-1}(\Omega)$ as $p\to 1^+$, one may take limits in~\eqref{eq:Pf_Append_14} to obtain that $G_M (u)=0$. This implies that $u\in\lio$ and $\|u\|_\lio \leq M$.
\end{proof}

We are now in position to demonstrate Theorem~\ref{th:Appendix}.

\begin{proof}[Proof of Theorem~\ref{th:Appendix}]
Although some parts of the proof are standard, we include them for completeness. We divide the proof into several steps.
\medskip 

\textbf{Step 1.} Existence of the vector field $z$.
\smallskip

Let $1\leq q <\frac{p}{p-1}$. Using the H\"older inequality and~\eqref{eq:Pf_Append_7bis}, we deduce
\begin{equation}
\label{eq:Pf_Append_15}
\left(\into \left||\nabla u_p|^{p-2} \nabla u_p\right|^q \right)^\frac{1}{q} \leq \left(\into |\nabla u_p|^p \right)^\frac{p-1}{p} |\Omega|^{\frac{1}{q}-\frac{p-1}{p}} \leq C^\frac{p-1}{p} |\Omega|^{\frac{1}{q}-\frac{p-1}{p}}. 
\end{equation}
It follows that, for any $q<+\infty$ fixed, the sequence $|\nabla u_p|^{p-2} \nabla u_p$ is bounded in $L^q(\Omega)^N$. Hence, up to subsequences, $|\nabla u_p|^{p-2} \nabla u_p$ converges weakly in $L^q(\Omega)^N$ to some $z_q\in L^q(\Omega)^N$. A standard diagonal argument then yields the existence of a vector field $z$, independent of $q$, such that $|\nabla u_p|^{p-2} \nabla u_p\rightharpoonup z$ weakly in $L^q(\Omega)^N$ for every $q<+\infty$. Letting $p\to 1^+$ in~\eqref{eq:Pf_Append_15} and using the weak lower semicontinuity of the norm, one obtains that 
\[
\|z\|_{L^q(\Omega)^N} \leq |\Omega|^\frac{1}{q},\ \forall q\in [1, +\infty).
\]
Finally, letting $q\to+\infty$, we deduce that $z\in \lio^N$ and $\|z\|_{\lio^N}\leq 1$.
\medskip

\textbf{Step 2.} The vector field $z$ is in $\DM(\Omega)$.
\smallskip

For any $s\in \R$, let $\Gamma(s) := \int_0^s g(t) \, \mathrm{d}t$ and $\Gamma_{p,p}(s) := \int_0^s g_p(t)^\frac{1}{p} \, \mathrm{d}t$. Taking a non-negative $\varphi\in C_c^1(\Omega)$ as test function in~\eqref{eq:Pb_pLap}, we obtain
\begin{equation}
\label{eq:Pf_Append_16}
\into |\nabla u_p|^{p-2} \nabla u_p \cdot \nabla \varphi = \into \varphi |\nabla \Gamma_{p,p}(u_p)|^p + \into h(u_p)f \varphi.
\end{equation}
Note that, thanks to the Young inequality, we have
\[
\into \varphi |\nabla \Gamma_{p,p}(u_p)| \leq \into \varphi |\nabla \Gamma_{p,p}(u_p)|^p + \frac{p-1}{p} \into \varphi
\]
and, since $\Gamma_{p,p}(u_p) \to \Gamma(u)$ in $L^1(\Omega)$ because $\|u\|_\lio\leq M$, then $\liminf_{p\to 1^+} \into \varphi |\nabla \Gamma_{p,p}(u_p)|^p \geq \into \varphi |D\Gamma(u)|$. Hence, passing to the limit in~\eqref{eq:Pf_Append_16} we obtain
\begin{equation*}
    -\operatorname{div} z \geq |D\Gamma(u)| + h(u)f \text{ in } \mathcal{D}'(\Omega).
\end{equation*}
Therefore, $z\in\DM_{\rm loc}(\Omega)$. Moreover, as $h(u)f\in L^{N,\infty}(\Omega)$, a slight modification of Lemma~\ref{lem:Global_z} gives that $z\in \DM(\Omega)$.  

\medskip
\textbf{Step 3.} Distributional formulation~\eqref{eq:def_distA}, identification of $z$ given by~\eqref{eq:def_pairA} and $D^ju=0$.
\smallskip

We define $\Gamma_{p,1}(s) := \int_0^s g_p(t) \, \mathrm{d}t$ for every $s\in\R$. Let $0\leq \varphi \in C_c^1(\Omega)$. Taking $e^{\Gamma_{p,1}(u_p)} \varphi \in \sob\cap \lio$ as test function in~\eqref{eq:Pb_pLap}, we deduce after cancellations that
\begin{equation}
\label{eq:Pf_Append_17}
\into e^{\Gamma_{p,1}(u_p)} |\nabla u_p|^{p-2} \nabla u_p \cdot \nabla \varphi = \into h(u_p) f e^{\Gamma_{p,1}(u_p)} \varphi.
\end{equation}
Since $e^{\Gamma_{p,1}(u_p)}$ is uniformly bounded in $\lio$, we can easily pass to the limit in~\eqref{eq:Pf_Append_17} to obtain
\begin{equation*}
\into e^{\Gamma(u)} z \cdot \nabla \varphi = \into h(u) f e^{\Gamma(u)}  \varphi,
\end{equation*}
that is,
\begin{equation*}
-\operatorname{div} \left( e^{\Gamma(u)}z \right) = h(u) f e^{\Gamma(u)} \text{ as measures in } \Omega.
\end{equation*}

At this point, we can follow line by line the proof of Theorem~\ref{th:Exist} (Steps 4--6) to obtain that $D^j u=0$, that the distributional formulation \eqref{eq:def_distA} holds and that $(z,Du)=|Du|$ as measures in $\Omega$.

\medskip
\textbf{Step 4.} Boundary condition~\eqref{eq:def_bordA}.
\smallskip

Due to the absence of singular terms, the boundary condition can be shown more easily than in Theorem~\ref{th:Exist}. We set for any $p\in (1,2)$ and any $s\in \R$ the functions
\begin{equation*}
\tilde \Gamma(s):= \int_0^s \left(m-tg(t)\right)|t|^{m-1}\, \mathrm{d}t \qquad \text{and} \qquad \tilde \Gamma_p(s):= \int_0^s \left(m-tg_p(t)\right)|t|^{m-1}\, \mathrm{d}t,
\end{equation*}
and we take $m$ sufficiently large (for instance, for $m>(M+1)\max_{|s|\leq M}g(s)$) to ensure that the integrand $\left(m-tg_p(t)\right)|t|^{m-1}$ is non-negative for any $t\in\R$.

 Let $\phi$ be a non-negative Lipschitz function on $\partial\Omega$. Then there exists $\varphi \in C(\overline{\Omega})$ which is a non-negative Lipschitz function on $\Omega$ and such that $\varphi\big|_{\partial\Omega} = \phi$.   Taking $|u_p|^{m-1} u_p \varphi \in \sob\cap \lio$ as test function in~\eqref{eq:Pb_pLap}, we obtain after rearranging some terms that
\begin{equation*}
\into \left(m - u_p g_p(u_p)\right) |u_p|^{m-1} \varphi |\nabla u_p|^p + \into |u_p|^{m-1} u_p |\nabla u_p|^{p-2} \nabla u_p \cdot  \nabla\varphi = \into |u_p|^{m-1} u_p h(u_p) f \varphi.
\end{equation*}
Taking into account that $\tilde \Gamma_p(u_p)$ has zero boundary trace and using the Young inequality, we deduce that
\begin{equation}
\label{eq:Pf_Append_18}
\begin{split}
\into & \varphi |\nabla \tilde \Gamma_p(u_p)| + \intdo |\tilde\Gamma_p (u_p) | \varphi  \ \dH\\
& =
\into \left(m - u_p g_p(u_p)\right) |u_p|^{m-1} \varphi |\nabla u_p|\\
&\leq \into \left(m - u_p g_p(u_p)\right) |u_p|^{m-1} \varphi |\nabla u_p|^p + \frac{p-1}{p}\into \left(m - u_p g_p(u_p)\right) |u_p|^{m-1} \varphi\\
&= \into |u_p|^{m-1} u_p h(u_p) f \varphi - \into |u_p|^{m-1} u_p |\nabla u_p|^{p-2} \nabla u_p \cdot  \nabla\varphi + \frac{p-1}{p}\into \left(m - u_p g_p(u_p)\right) |u_p|^{m-1} \varphi.
\end{split}
\end{equation}

Since $u_p$ is uniformly bounded in $\lio$, we can easily pass to the limit in the right-hand side of~\eqref{eq:Pf_Append_18}. In the left-hand side, we use the lower semicontinuity of the functional $I(v) = \into |Dv|\varphi + \intdo |v| \varphi$ (defined in $BV(\Omega)$) with respect to the $L^1(\Omega)$-norm. Thus, letting $p\to 1^+$ in~\eqref{eq:Pf_Append_18} we obtain that 
\begin{equation*}
\into \varphi |D\tilde \Gamma(u)| + \intdo |\tilde \Gamma(u)| \phi \ \dH \leq \into h(u) |u|^{m-1}u f \varphi - \into |u|^{m-1}u\, z  \cdot \nabla \varphi.
\end{equation*}

From this point, one may reason as in the proof of Theorem~\ref{th:Exist} to obtain that
\begin{equation*}
\intdo \phi \left(|\tilde\Gamma(u)| + |u|^{m-1}u [z,\nu] \right) \ \dH = \intdo \phi |u|^{m-1} \left((|u| + u[z,\nu]) - \frac{\int_0^u g(t)|t|^m \, \mathrm{d}t}{|u|^{m-1}} \right) \ \dH \leq 0
\end{equation*}
for every non-negative $\phi\in C(\partial\Omega)$ (by an easy  density argument), where we have taken into account that $|\tilde \Gamma(s)| = |s|^m - \int^s_0 g(t)|t|^m\, \mathrm{d}t$. Therefore, for $\mathcal{H}^{N-1}$-$\ae$ $x\in\partial\Omega$, it holds that either $u(x)=0$ or
\begin{equation}
\label{eq:Pf_Append_19}
0\leq |u(x)| + u(x)[z,\nu](x) \leq \frac{\int_0^{u(x)} g(t) |t|^m\, \mathrm{d}t}{|u(x)|^{m-1}},
\end{equation}
as $\|[z,\nu]\|_{L^\infty(\partial\Omega)} \leq 1$. If~\eqref{eq:Pf_Append_19} holds, since $g>0$,  then $u(x)\ge 0$ and
\begin{equation*}
0\leq u(x) + u(x)[z,\nu](x) \leq \frac{u(x)^2}{m+1} \max_{|s|\leq M} g(s).
\end{equation*}
Taking the limit as $m\to\infty$ we obtain $u(x) + u(x)[z,\nu](x)=0$. This concludes the proof.
\end{proof}

\section*{Acknowledgements}
The first author has been funded by Junta de Andaluc\'ia (grant FQM-194),
by the Spanish Ministry of Science and Innovation, Agencia Estatal de Investigaci\'on (AEI)
and Fondo Europeo de Desarrollo Regional (FEDER) (grant PID2021-122122NB-I00) and by
the FPU predoctoral fellowship of the Spanish Ministry of Universities (FPU21/04849).
The second and the third author  have been  partially supported by the Gruppo Nazionale per l’Analisi Matematica, la Probabilità e le loro Applicazioni (GNAMPA) of the Istituto Nazionale di Alta Matematica (INdAM).

\section*{Conflict of interest declaration}

The authors declare no competing interests.

\section*{Data availability statement}
 We do not analyse or generate any datasets, because our work
proceeds within a theoretical and mathematical approach. One
can obtain the relevant materials from the references below.  

\section*{AI Statement}   The authors used Claude Opus 5.5 (Anthropic) to assist with the detection of typographical errors, consistency checks, and the verification of bibliographic references. All mathematical arguments and proofs were developed, written, and verified by the authors, who take full responsibility for the entire content of the manuscript.

\end{document}